\documentclass[table]{amsart}
\usepackage[margin=1.1in]{geometry}
\usepackage{amscd,amsmath,amsxtra,amsthm,amssymb,stmaryrd,xr,mathrsfs,mathtools,enumerate,commath, comment, mathtools, orcidlink}
\usepackage{tikz}
\usetikzlibrary{arrows.meta,positioning}

\usetikzlibrary{calc}
\usepackage{tikz-3dplot}
\usepackage{stmaryrd}
\usepackage{multirow}
\usepackage{xcolor}
\usepackage{commath}
\usepackage{comment}
\usepackage{svg}
\usepackage{graphics}
\usepackage{longtable} 
\usepackage{pdflscape} 
\usepackage{booktabs}
\usepackage{hyperref}
\definecolor{vegasgold}{rgb}{0.77, 0.7, 0.35}
\definecolor{darkgoldenrod}{rgb}{0.72, 0.53, 0.04}
\definecolor{gold(metallic)}{rgb}{0.83, 0.69, 0.22}
\hypersetup{
 colorlinks=true,
 linkcolor=darkgoldenrod,
 filecolor=brown,      
 urlcolor=gold(metallic),
 citecolor=darkgoldenrod,
 }
\newtheorem{lthm}{Theorem}

\usepackage[all,cmtip]{xy}

\DeclareFontFamily{U}{wncy}{}
\DeclareFontShape{U}{wncy}{m}{n}{<->wncyr10}{}
\DeclareSymbolFont{mcy}{U}{wncy}{m}{n}
\DeclareMathSymbol{\Sh}{\mathord}{mcy}{"58}
\usepackage[T2A,T1]{fontenc}
\usepackage[OT2,T1]{fontenc}

\newtheorem{theorem}{Theorem}[section]
\newtheorem{lemma}[theorem]{Lemma}

\newtheorem*{theorem*}{Theorem}
\newtheorem*{ass*}{Assumption}
\newtheorem{definition}[theorem]{Definition}

\newtheorem{remark}[theorem]{Remark}

\newtheorem{proposition}[theorem]{Proposition}

\newcommand{\GL}{\operatorname{GL}}

\newcommand{\Ind}{\mathrm{Ind}}
\newcommand{\diag}{\mathrm{diag}}

\newcommand{\St}{\mathrm{St}}
\newcommand{\Z}{\mathbb{Z}}
\newcommand{\Q}{\mathbb{Q}}
\newcommand{\F}{\mathbb{F}}

\newcommand{\op}[1]{\operatorname{#1}}

 \DeclareMathSymbol{\sha}{\mathord}{mcy}{"58}
 \makeatletter
\newcommand{\mylabel}[2]{#2\def\@currentlabel{#2}\label{#1}}
\makeatother

\numberwithin{equation}{section}

\title[Asymptotic Vanishing of Stiefel--Whitney Classes for $\GL_n(\F_q)$]{Asymptotic Vanishing of Stiefel--Whitney Classes for $\GL_n(\F_q)$}
\author[A.~Ray]{Anwesh Ray\, \orcidlink{0000-0001-6946-1559}}
\address[Ray]{Chennai Mathematical Institute, H1, SIPCOT IT Park, Kelambakkam, Siruseri, Tamil Nadu 603103, India}
\email{anwesh@cmi.ac.in}

\begin{document}
\maketitle

\begin{abstract}
We study the asymptotic behavior of Stiefel--Whitney classes of irreducible orthogonal representations of the finite general linear groups $\GL_n(\F_q)$. Building on recent formulas expressing these classes in terms of character values at elements of order dividing $2$, we relate questions about characteristic classes to problems of $2$-adic divisibility of character values. For fixed odd $q$, we show that as $n \to \infty$, the values of irreducible orthogonal characters become highly divisible by powers of $2$ for almost all representations. As a consequence, the proportion of irreducible orthogonal representations with trivial first and second Stiefel--Whitney classes tends to $1$, and if $q \equiv 1 \pmod{4}$, the same holds for the fourth Stiefel--Whitney class. In particular, almost all orthogonal representations are spinorial in the large rank limit. In contrast, when the rank is fixed and $q \to \infty$, the behavior is markedly different. Focusing on $\GL_2(\F_q)$, we show that the second Stiefel--Whitney class vanishes with limiting probability $3/8$ among irreducible orthogonal representations.
\end{abstract}
\section{Introduction}

The study of characteristic classes of group representations occupies a central position at the interface of algebra, topology, and representation theory. Among these, the Stiefel--Whitney classes of real representations encode subtle arithmetic and geometric information, ranging from lifting problems to the topology of classifying spaces. In recent years, there has been growing interest in understanding these invariants for representations of finite groups of Lie type, particularly through explicit formulas in terms of character values on elements of small order (see for instance \cite{JoshiSpallone1,JoshiSpallone2,GangulyJoshi2,GangulyJoshiTotalSWCs, MalikSpallone2}).

A general philosophy emerging from topology suggests that low-degree characteristic classes of large-dimensional objects tend to exhibit vanishing phenomena. This principle has been made precise in several settings. For instance, in the case of symmetric groups, results of Ayyer--Prasad--Spallone \cite{APS} and Ganguly--Spallone \cite{GangulySpallone} show that, asymptotically, almost all irreducible representations are not only achiral (i.e.\ have trivial first Stiefel--Whitney class), but are in fact spinorial, with both $w_1$ and $w_2$ vanishing with probability tending to $1$ as the rank grows.

The purpose of this paper is to establish analogous asymptotic vanishing results for irreducible orthogonal representations of the finite general linear groups $\GL_n(\F_q)$, as the rank $n \to \infty$ with $q$ fixed. Our approach builds on recent work expressing Stiefel--Whitney classes of real representations in terms of character values evaluated at elements of order dividing $2$. These formulas reduce questions about characteristic classes to problems concerning congruences and divisibility properties of character values. The main technical input of the paper is a family of asymptotic divisibility results for irreducible characters of $\GL_n(\F_q)$. More precisely, we show that for any fixed element $g$ in a smaller general linear group, the values of irreducible orthogonal characters evaluated at $g$ become highly divisible by powers of $2$ for almost all representations as $n \to \infty$. This leverages techniques developed by Shah and Spallone \cite{shah2024divisibility} who study asymptotic divisibility questions for all irreducible representations of $\op{GL}_n(\F_q)$ (as $n\rightarrow \infty$) and ultimately relies on the combinatorial structure arising from Green's parametrization of irreducible representations \cite{Green}.

Combining these asymptotic divisibility results with the explicit formulas for Stiefel--Whitney classes, we obtain our first main theorem, stated below.

\begin{lthm}[Theorem \ref{thm asymp SW}]
Let $q$ be an odd prime power and, for each $n \ge 1$, let $G_n = \GL_n(\F_q)$. Denote by $\op{OIrr}(G_n)$ the set of irreducible complex representations of $G_n$ which admit a $G_n$-invariant real structure (equivalently, irreducible orthogonal representations). For $\pi \in \op{OIrr}(G_n)$, let $w_i(\pi) \in H^i(G_n;\Z/2\Z)$
denote its $i$th Stiefel--Whitney class.
\begin{enumerate}
\item The proportion of irreducible orthogonal representations with trivial first and second Stiefel--Whitney classes tends to $1$ as $n \to \infty$, i.e.,
\[
\lim_{n\to\infty}
\frac{
\#\{\pi \in \op{OIrr}(G_n) : w_1(\pi)=w_2(\pi)=0\}
}{
|\op{OIrr}(G_n)|
}
= 1.
\]

\item If $q \equiv 1 \pmod{4}$, then the proportion of irreducible orthogonal representations with trivial first, second, and fourth Stiefel--Whitney classes tends to $1$ as $n \to \infty$, i.e.,
\[
\lim_{n\to\infty}
\frac{
\#\{\pi \in \op{OIrr}(G_n) : w_1(\pi)=w_2(\pi)=w_4(\pi)=0\}
}{
|\op{OIrr}(G_n)|
}
= 1.
\]
\end{enumerate}
\end{lthm}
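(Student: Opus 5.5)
The plan is to combine the explicit formulas for Stiefel--Whitney classes of orthogonal representations of $\GL_n(\F_q)$ (Joshi--Spallone, Ganguly--Joshi) with an asymptotic $2$-adic divisibility statement for orthogonal character values. Those formulas say the following. For $q$ odd, $w_1(\pi)$ and $w_2(\pi)$ are determined by the character values $\chi_\pi(g)$ at finitely many elements $g$ of order dividing $2$, together with $\chi_\pi(1)$. The relevant elements are the diagonal involutions $t_{k}=\diag(-1,\dots,-1,1,\dots,1)$ with $k$ entries $-1$, and in fact only $k\le 2$ (respectively $k\le 4$ when treating $w_4$) matter. Concretely, the coefficients of $w_i$ in a basis of the relevant piece of $H^*(G_n;\Z/2\Z)$, detected on the elementary abelian $2$-subgroup of diagonal matrices or on a $2$-Sylow, are expressed via $2$-adic valuations of expressions like $\frac{\chi_\pi(1)-\chi_\pi(t_k)}{2^{m}}$ modulo $2$. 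The first step is therefore to record that there is an absolute constant $N$ (depending only on $q \bmod 8$) such that if $2^N$ divides $\chi_\pi(1)-\chi_\pi(g)$ for every $g$ in a fixed finite list $g\in\{t_1,\dots,t_4\}$ (and the analogous elements needed for $q\equiv 1\pmod 4$, where the mod-$4$ structure of $\F_q^\times$ introduces elements of order $4$ whose squares are $-1$), then $w_1(\pi)=w_2(\pi)=0$, and also $w_4(\pi)=0$ in case (2).

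The second step is an asymptotic divisibility theorem for orthogonal characters. For fixed $g\in\GL_m(\F_q)$, embedded block-diagonally in $G_n$ as $g\oplus I_{n-m}$, and fixed $N$, we want
\[
\lim_{n\to\infty}\frac{\#\{\pi\in\op{OIrr}(G_n): 2^N \mid \chi_\pi(g)\ \text{and}\ 2^N\mid \chi_\pi(1)\}}{|\op{OIrr}(G_n)|}=1 .
\]
I would prove this following Shah--Spallone. Green's parametrization attaches to $\pi$ a partition-valued function $\lambda$ on Frobenius orbits of $\overline{\F}_q^\times$. By a Murnaghan--Nakayama/Littlewood--Richardson type branching for Deligne--Lusztig or Harish-Chandra induction from the Levi $\GL_m\times\GL_{n-m}$, the value $\chi_\pi(g\oplus I)$ is a $\Z$-combination of terms, each of which is a unipotent-degree-type factor of $\GL_{n-m}$ multiplied by bounded coefficients. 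Each such factor contains a $q$-analogue of a hook-length product. Its $2$-adic valuation grows whenever the partitions have many hooks of length divisible by the multiplicative order of $q$ modulo $2^{?}$; equivalently, whenever the corresponding $2$-core/$2$-quotient data are large.

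The orthogonal representations are those whose $\lambda$ is invariant under $x\mapsto x^{-1}$ (all irreducibles of $\GL_n$ have Frobenius--Schur indicator $\ge 0$, so self-dual means orthogonal). Counting these, one shows that the proportion of self-dual $\lambda$ for which every constituent partition has large "$2$-weight" tends to $1$. This count is a generating-function argument. The number of self-dual parameters of size $n$ grows like the coefficients of a product of partition generating functions over self-dual orbits and pairs of inverse orbits. Those with bounded $2$-core-type defect contribute a vanishing proportion, because partitions of $k$ with bounded $2$-weight are exponentially rarer than all partitions of $k$, and because most mass sits on a few large partitions. The main obstacle is this step: the restriction to self-dual $\lambda$ changes the generating function, since paired orbits contribute squares. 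I must check that the Shah--Spallone counting still works, i.e.\ that the dominant contributions still come from partitions of size tending to infinity, each having a $q$-hook structure forcing high $2$-divisibility.

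Finally, applying step two to each of the finitely many $g$ from step one with the constant $N$, the intersection of finitely many events of density $1$ has density $1$, which gives both parts of Theorem \ref{thm asymp SW}.
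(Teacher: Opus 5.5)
\textbf{There is a genuine gap in Step 2.} Your Step 1 matches the paper. Via \eqref{w_1 basic fact} and Theorem~\ref{theorem about w_2 and w_4}, vanishing of $w_1,w_2$ (and $w_4$ when $q\equiv 1\pmod 4$) follows once $\chi_\pi(1),\chi_\pi(h_1),\chi_\pi(h_2),\chi_\pi(t_x)$ are divisible by a fixed power of $2$. Each of these is a value at a fixed element of a fixed $G_{n_0}$.

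The gap is in Step 2, which carries all the content. First, you never establish how the $2$-adic valuation of $\chi_{\boldsymbol\mu}(g\oplus I_{n-n_0})$ is controlled. The branching formula you invoke is unspecified, as the ``$2^{?}$'' shows. The paper needs no character formula. By Lemma~\ref{valuation lemma}, $\chi_{\boldsymbol\mu}(g)[G_n:Z_{G_n}(g)]/d_{\boldsymbol\mu}$ is an algebraic integer, and $Z_{G_n}(g)\supseteq G_{n-n_0}$. Hence $v_2(\chi_{\boldsymbol\mu}(g))\ge v_2(d_{\boldsymbol\mu})-v_2\bigl(\prod_{i=0}^{n_0-1}(q^{n-i}-1)\bigr)$, and the subtracted term is at most $n_0v_2(q^2-1)+v_2((n)_{n_0})$. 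This reduces everything to a statement about degrees alone, Proposition~\ref{v(n)_k) propn}: $v_2(d_{\boldsymbol\mu})\ge v_2((n)_k)$ for a density-one set of $\boldsymbol\mu\in\mathfrak Y_n$.

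Second, and more seriously, your counting mechanism is false. It is not true that almost all self-dual $\boldsymbol\mu$ have every (or even one) constituent partition of large $2$-weight. Nor does the mass sit on a few large partitions. For fixed $q$, a typical $\boldsymbol\mu\in\mathfrak Y_n$ resembles the factorization of a random self-reciprocal polynomial. Its support contains polynomials of large degree, and the attached partitions are tiny, mostly single boxes. For such $\boldsymbol\mu$, no hook-length or $2$-core argument on the partitions gives anything. The divisibility comes from the index part of $d_{\boldsymbol\mu}$, through the Shah--Spallone bound $v_2(d_{\boldsymbol\mu})\ge v_2\bigl(n!/\prod_f|\boldsymbol\mu(f)|!\bigr)+\sum_f v_2(\mathfrak f_{\boldsymbol\mu(f)})$.

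The missing idea is the dichotomy in Lemma~\ref{mu to F lemma}. Fix the type $F$.
\begin{enumerate}
\item If some $f$ in its support has $d(f)>k$, then $\sum_f F(f)\le n-k$. The multinomial term alone then gives $v_2(d_{\boldsymbol\mu})\ge v_2((n)_k)$ for every $\boldsymbol\mu$ of that type.
\item If all $d(f)\le k$, the support has at most $q^{k+1}$ elements, so some $F(f_0)\ge n/(kq^{k+1})$. Theorem~\ref{GPS thm 1} applied to $\mathfrak f_{\boldsymbol\mu(f_0)}$ then handles all but an $\varepsilon$-fraction of that fiber.
\end{enumerate}
Conditioning on $F$ also removes what you flagged as the main obstacle. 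Within a fiber, the self-dual $\boldsymbol\mu$ are just independent choices $\lambda_f\vdash F(f)$ over the orbits $\{f,f^*\}$. So the estimate is uniform in $F$, and summing over fibers gives density one with no comparison of generating functions.
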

\noindent Thus, in the large rank limit, almost all orthogonal representations are spinorial, and in fact exhibit vanishing of higher characteristic classes as well. 
\par In contrast to the large rank regime, we also investigate the complementary setting in which the rank is fixed and the size of the finite field grows. Focusing on $\GL_2(\F_q)$ as $q \to \infty$, we show that the behavior of character values, and hence of the associated Stiefel--Whitney classes, differs markedly from the large $n$ case. 

In contrast to the large rank regime, the situation for $\GL_2(\F_q)$ as $q \to \infty$ is governed by the rigid structure of its representation theory. The set of irreducible representations decomposes into four natural families: one-dimensional representations, principal series representations, twists of the Steinberg representation, and cuspidal representations arising from characters of $\F_{q^2}^\times$. From the point of view of Stiefel--Whitney classes, the relevant subset is the collection $\mathcal{O}_q$ of irreducible orthogonal representations, i.e.\ those which are self-dual. Our main result shows that the vanishing of the second Stiefel--Whitney class occurs with a limiting probability of $3/8$.

\begin{lthm}[Theorem \ref{main thm 2 corrected}]
Let $G_q=\GL_2(\F_q)$ with $q$ odd, and let $\mathcal{O}_q$ denote the set of irreducible orthogonal complex representations of $G_q$. Then
\[
\lim_{\substack{q\to\infty\\ q\equiv a\pmod{8}}}
\frac{\#\{\pi\in \mathcal{O}_q : w_2(\pi)=0\}}{|\mathcal{O}_q|}
=
\begin{cases}
1 & \text{if } a=1,\\[4pt]
\frac14 & \text{if } a=3,5,\\[4pt]
0 & \text{if } a=7.
\end{cases}
\]

Furthermore,
\[
\lim_{X\rightarrow \infty}
\frac{\sum_{\substack{q\leq X\\
q\text{ is odd}}}\#\{\pi\in \mathcal{O}_q : w_2(\pi)=0\}}
{\sum_{\substack{q\leq X\\
q\text{ is odd}}}|\mathcal{O}_q|}
=
\frac{3}{8},
\]where in the sums above, $q$ ranges over odd prime powers.
\end{lthm}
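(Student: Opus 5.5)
The plan is to make the general formula for Stiefel--Whitney classes in terms of character values at elements of order dividing $2$ completely explicit for $G_q=\GL_2(\F_q)$, and then count. For $q$ odd, the mod $2$ cohomology of $G_q$ in low degree is detected on the elementary abelian $2$-subgroup of diagonal sign matrices. Modulo conjugacy, the relevant elements are $1$, $t=\diag(-1,1)$ and $z=-I$. I will take as given the formulas from the works cited in the introduction (Joshi--Spallone, Ganguly--Joshi), which express $w_1(\pi)$ and $w_2(\pi)$ through the integers $\chi_\pi(1)$, $\chi_\pi(t)$, $\chi_\pi(z)$ and $q$ modulo small powers of $2$. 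Schematically:
\begin{itemize}
\item $w_1(\pi)$ is governed by $\tfrac{\chi_\pi(1)-\chi_\pi(t)}{2}\bmod 2$;
\item $w_2(\pi)$ is governed by the parities of $\tfrac{\chi_\pi(1)-\chi_\pi(t)}{4}$ and $\tfrac{\chi_\pi(1)-\chi_\pi(z)}{4}$, together with a term coming from the cohomology class of the determinant and depending on $q \bmod 8$.
\end{itemize}
Before applying these, I will pin down exactly which mod $8$ data enter, since the final answer depends on that.

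\textbf{Step 1: list $\mathcal{O}_q$.} I use the fact that every self-dual irreducible representation of $\GL_n(\F_q)$ is orthogonal, so $\mathcal{O}_q$ is exactly the set of self-dual irreducibles. There are four families.
\begin{itemize}
\item One-dimensional representations $\alpha\circ\det$ with $\alpha^2=1$: two of these.
\item Twists of the Steinberg representation $\St\otimes(\alpha\circ\det)$ with $\alpha^2=1$: two of these.
\item Principal series $\Ind(\alpha,\beta)$ with $\{\alpha^{-1},\beta^{-1}\}=\{\alpha,\beta\}$. Either $\alpha,\beta$ are the two distinct quadratic characters (one representation), or $\beta=\alpha^{-1}\neq\alpha$ (about $(q-3)/2$ representations).
\item Cuspidal $\pi_\theta$ with $\theta^{q+1}=1$ and $\theta\neq\theta^q$ (about $q/2$ representations).
\end{itemize}
Hence $|\mathcal{O}_q|=q+O(1)$, and only the last two families matter in the limit.

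\textbf{Step 2: character values.} From the standard character table:
\begin{itemize}
\item $\Ind(\alpha,\alpha^{-1})$ has $\chi(1)=q+1$, $\chi(z)=q+1$, and $\chi(t)=2\alpha(-1)$.
\item $\pi_\theta$ has $\chi(1)=q-1$, $\chi(z)=(q-1)\theta(-1)$, and $\chi(t)=0$.
\end{itemize}
Substituting into the $w_2$ formula, I expect that $w_2(\pi)=0$ is decided by the residue of $q$ modulo $8$, together with the sign $\alpha(-1)$ in the principal series case and the sign $\theta(-1)$ in the cuspidal case.

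\textbf{Step 3: equidistribution of the signs.} Among characters $\alpha$ of $\F_q^\times$ with $\alpha\neq\alpha^{-1}$, the sign $\alpha(-1)$ is $\pm1$ for half of them, up to $O(1)$. Among characters $\theta$ of the norm-one subgroup of order $q+1$, the sign $\theta(-1)$ is likewise $\pm1$ for half, up to $O(1)$. So each of the two large families contributes a proportion in $\{0,\tfrac12,1\}$ of itself. A case check over $q\equiv 1,3,5,7 \pmod 8$ should then give the limits $1,\tfrac14,\tfrac14,0$. For example, for $a=3,5$, one of the two families would vanish on exactly half of its members and the other would never vanish, giving $\tfrac12\cdot\tfrac12=\tfrac14$.

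\textbf{Step 4: averaging over $q$.} Both numerator and denominator are $q\cdot c(q \bmod 8)+O(1)$. The prime powers $q\le X$ that are not primes contribute $O(X^{3/2})$ to the weighted sums, which is negligible against $\sum_{q\le X} q\asymp X^2/\log X$ over primes. By the prime number theorem in arithmetic progressions, odd primes weighted by $q$ are equidistributed among the four odd classes mod $8$. The weighted average is therefore $\tfrac14(1+\tfrac14+\tfrac14+0)=\tfrac38$.

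\textbf{Main obstacle.} I expect the main difficulty to be Step 2: correctly applying the $w_2$ formula, including the contribution from the determinant class and the dependence on $q\bmod 8$ (for instance through $\tfrac{q\pm1}{4}$), and carefully verifying the parity of each term in each residue class. To guard against sign or parity errors there, I will first check the resulting formula against the known case $q=3$.
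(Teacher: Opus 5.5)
\textbf{There is a genuine gap, in Step 3, and the case check it defers does not give $1,\frac14,\frac14,0$.} The enumeration in Step 1 and the averaging in Step 4 are sound; Step 4 is essentially the paper's argument.

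\emph{The cuspidal family does not split by $\theta(-1)$.} A self-dual parameter satisfies $\theta^{q+1}=1$. This says $\theta$ is trivial on $N(\F_{q^2}^\times)=\F_q^\times$, so $\theta(-1)=1$ for every self-dual cuspidal. You have identified $\theta$ with a character of the norm-one torus, but $-1\in\F_q^\times$ goes to the identity of $\F_{q^2}^\times/\F_q^\times$, not to $-1$ there. Hence $\chi_\pi(z)=\dim\pi$ on both large families, and any $(\chi(1)-\chi(z))/4$ term vanishes. So $w_2$ depends only on $m_\pi=(\dim\pi-\chi_\pi(t))/2$. By the Ganguly--Joshi formulas recalled in the paper:
\begin{itemize}
\item if $q\equiv1\pmod4$, then $w_2(\pi)=0$ if and only if $m_\pi\equiv0\pmod4$;
\item if $q\equiv3\pmod4$, then $w_2(\pi)=0$ if and only if $\binom{m_\pi}{2}$ is even, i.e.\ $m_\pi\equiv0,1\pmod4$.
\end{itemize}
In the second case $m_\pi$ is odd on both families, so ``the parity of $(\chi(1)-\chi(t))/4$'' is not the right invariant.

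\emph{Principal series.} For $\Ind(\alpha\otimes\alpha^{-1})$ you get $m_\pi=\frac{q-1}{2}$ or $\frac{q+3}{2}$ according as $\alpha(-1)=1$ or $-1$. These differ by $2$, so in every residue class mod $8$ exactly one sign gives $w_2=0$. Thus the principal series always contribute density exactly $\frac14$ of $\mathcal{O}_q$. This alone rules out the limit $1$ at $a=1$ and the limit $0$ at $a=7$.

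\emph{Cuspidals.} Here $m_\pi=\frac{q-1}{2}$ uniformly. So all of them have $w_2=0$ when $q\equiv1,3\pmod8$, and none do when $q\equiv5,7\pmod8$.

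\emph{Consequence.} The limits are $\frac34,\frac34,\frac14,\frac14$ for $a=1,3,5,7$, and your Step 4 then yields $\frac12$, not $\frac38$. The $q=3$ check you planned already shows this. The $2$-dimensional self-dual cuspidal (inflated from $S_3$) and $\St$ both have $m_\pi=1$, so four of the six members of $\mathcal{O}_3$ have $w_2=0$. The statement as written therefore cannot be proved by this or any route.

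The paper's own proof reaches $1,\frac14,\frac14,0$ only by departing from its own propositions:
\begin{itemize}
\item For $q\equiv1\pmod8$ it asserts that every $\pi$ has $w_2=0$. Its principal-series computation gives $w_2\neq0$ when $\chi_1(-1)=-1$.
\item For $q\equiv7\pmod8$ it omits the principal series with $\chi_1(-1)=-1$, which its proposition shows have $w_2=0$.
\item For cuspidals with $q\equiv3\pmod8$ it finds $m_\pi\equiv1\pmod4$ but concludes $w_2\neq0$, contrary to its own criterion.
\end{itemize}
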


\par Our asymptotic divisibility results are closely related to recent progress on the behavior of character values in large families of finite groups, particularly for symmetric groups. In a striking result, Peluse and Soundararajan \cite{PeluseSound} show that, for any fixed prime $\ell$, almost every entry in the character table of $S_n$ is divisible by $\ell$ as $n \to \infty$, confirming a conjecture of Miller. 
\par The structure of the paper is as follows. In \S \ref{s 2}, we recall the necessary background on Stiefel--Whitney classes and the representation theory of $\GL_n(\F_q)$, including Green's parametrization and explicit formulas for low-degree characteristic classes. In \S \ref{s 3}, we establish the asymptotic divisibility results for character values and deduce the main vanishing theorems. Finally, in \S \ref{s 4}, we analyze the large $q$ limit for $\GL_2(\F_q)$, providing a contrasting perspective to the large rank behavior.

\subsection*{Acknowledgment} The author thanks Prof.~Steven Spallone for his insightful suggestions and for answering many of the author's questions. He also thanks Prof. Hamid Usefi for pointing out an error in an earlier version of this article. 

\subsection*{Data availability} This manuscript has no associated data.
\subsection*{Conflict of interest statement} There is no conflict of interest to report.

\subsection*{Funding}
The author has no funding to report.

\section{Representation theory of finite $\op{GL}_n(\F_q)$ and Stiefel--Whitney classes}\label{s 2}
\par In this section, we recall some key definitions and properties of the \emph{Stiefel--Whitney classes} of irreducible real representations of a finite group as well as the representation theory of general linear groups over finite fields.
\subsection{Stiefel--Whitney classes of a representation}

Let $G$ be a finite group. Throughout this paper, all representations $(\pi,V)$ are finite-dimensional complex representations. We write $\op{Irr}(G)$ for the set of isomorphism classes of irreducible representations of $G$. Given a representation $(\pi, V)$, we let $(\pi^\vee,V^\vee)$ be the dual representation. A representation $\pi$ of $G$ is called \emph{orthogonal} if there exists a non-degenerate $G$-invariant symmetric bilinear form $B \colon V \times V \to \mathbb{C}$. On the other hand, $\pi$ is called \emph{symplectic} if there exists a non-degenerate $G$-invariant alternating bilinear form. If $\pi$ is irreducible and self-dual, then it is either orthogonal or symplectic.

\par Write $\chi_{\pi}(g)$ for the character of $\pi$ at an element $g \in G$. Then when $\pi$ is irreducible, set
\[\varepsilon(\pi)=\frac{1}{|G|} \sum_{g \in G} \chi_{\pi}\left(g^{2}\right).\] One has that
$$
\varepsilon(\pi)= \begin{cases}0, & \pi \text { is not self-dual }\\ 1, & \pi \text { is orthogonal } \\ -1, & \pi \text { is symplectic. }\end{cases}
$$
The invariant $\epsilon(\pi)$ is referred to as the \emph{Frobenius--Schur indicator} of $\pi$. Our main focus in this article will be the family of orthogonal representations. 

\begin{proposition}
    Let $(\pi, V)$ be a complex representation of a finite group $G$, the following are equivalent:
    \begin{enumerate}
        \item[(a)]$(\pi, V)$ is orthogonal,
        \item[(b)] there exists a representation $\left(\pi_{0}, V_{0}\right)$, with $V_{0}$ a real vector space, such that $\pi \cong \pi_{0} \otimes_{\mathbb{R}} \mathbb{C}$.
    \end{enumerate}
\end{proposition}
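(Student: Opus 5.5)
We prove the two implications separately; the substance is in (a) $\Rightarrow$ (b). Throughout, complex conjugation on $V$ means the conjugation with respect to a chosen real form, and we average over $G$ freely since $G$ is finite.

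\emph{(b) $\Rightarrow$ (a).} Suppose $\pi \cong \pi_0\otimes_{\mathbb R}\mathbb C$. Choose any positive definite inner product on $V_0$ and average it over $G$. The result is a $G$-invariant positive definite symmetric bilinear form $B_0$ on $V_0$. Extending $B_0$ $\mathbb C$-bilinearly to $V=V_0\otimes_{\mathbb R}\mathbb C$ gives a $G$-invariant symmetric bilinear form $B$. To see that $B$ is non-degenerate, pick a real basis of $V_0$ that is orthonormal for $B_0$. In that basis the Gram matrix of $B$ is the identity.

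\emph{(a) $\Rightarrow$ (b).} Let $B$ be a non-degenerate $G$-invariant symmetric form on $V$.

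1. Average a Hermitian inner product over $G$ to get a $G$-invariant positive definite Hermitian form $H$ on $V$.

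2. Define a conjugate-linear map $J\colon V\to V$ by the rule $B(u,v)=H(u,Jv)$ for all $u,v$, with $H$ linear in the first variable. Because $B$ and $H$ are both $G$-invariant, $J$ commutes with $G$. Because $B$ is symmetric, $J^2$ is complex-linear, $G$-equivariant, and self-adjoint positive definite with respect to $H$.

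3. Take the polar decomposition. Let $P=(J^2)^{1/2}$ be the positive square root. It is a polynomial in $J^2$, so it commutes with $G$ and with $J$. Set $\sigma=JP^{-1}$. Then $\sigma$ is conjugate-linear, commutes with $G$, and satisfies $\sigma^2=J^2P^{-2}=1$.

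4. Set $V_0=\{v\in V:\sigma v=v\}$. This is a real subspace stable under $G$. Since $\sigma$ is a conjugate-linear involution, $V=V_0\oplus iV_0$, and hence $V\cong V_0\otimes_{\mathbb R}\mathbb C$ as representations.

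The delicate point is step 2: checking that $J^2$ is $H$-self-adjoint and positive. The identity needed is
\[
H(J^2u,v)=\overline{H(Ju,Jv)},
\]
and it follows from the symmetry of $B$ together with the Hermitian symmetry of $H$.

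A character-theoretic shortcut is available when $\pi$ is irreducible. In that case $\operatorname{Hom}_G(V,V^\vee)$ is one-dimensional, so $B$ is the unique invariant form up to scalar. Then $J^2$ is a positive scalar, and rescaling $B$ makes it $1$ directly, which skips the polar decomposition in step 3. The general statement, however, needs the argument given above.
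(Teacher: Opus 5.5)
Your proof is correct. The paper gives no argument of its own here; it only cites \cite[Chapter II, Proposition 6.4]{BrokerDieck}. So the difference is that you supply a self-contained proof, namely the standard one: an invariant Hermitian form $H$, the conjugate-linear $G$-map $J$ defined by $B(u,v)=H(u,Jv)$, and the polar decomposition $\sigma=J(J^2)^{-1/2}$, which gives a $G$-equivariant real structure. This costs some length, but it removes the dependence on the reference and handles reducible $\pi$, as the statement requires; your shortcut via Schur's lemma on $J^2$ covers only irreducibles.

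Two details deserve a sentence when you write it up.
\begin{enumerate}
\item Since $J$ is conjugate-linear, $J\,p(J^2)=\bar p(J^2)\,J$. So ``$P$ is a polynomial in $J^2$'' gives $PJ=JP$ only because the polynomial can be taken with real coefficients. This holds because the spectrum of $J^2$ is positive real. Equivalently, $J$ preserves each eigenspace of $J^2$, and $P$ acts on each one by a real scalar.
\item Your identity yields $H(J^2u,u)=H(Ju,Ju)\ge 0$, but strict positivity uses nondegeneracy of $B$, i.e.\ injectivity of $J$. By contrast, complex-linearity of $J^2$ holds for any conjugate-linear $J$ and does not need symmetry of $B$.
\end{enumerate}
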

\begin{proof}
    For a proof of this result, see \cite[Chapter II, Proposition 6.4]{BrokerDieck}.
\end{proof}
Given a $d$-dimensional real vector bundle $E$ over a paracompact base space $B$, let $w_1(E), \dots, w_d(E)\in H^*(B, \Z/2\Z)$ be the associated Stiefel--Whitney classes (cf. \cite[\S~4]{Milnor}). Let $G$ be a finite group. Recall that there exists a classifying space $BG$
with universal principal $G$-bundle $EG \to BG$, unique up to homotopy. \begin{definition}
    Given a finite-dimensional real representation $(\pi,V)$ of $G$, one may
form the associated real vector bundle
\[
EG[V] := EG \times_G V \longrightarrow BG,
\]
where $G$ acts diagonally on $EG \times V$. The $i$th \emph{Stiefel--Whitney
class} of $\pi$ is defined by
\[
w_i(\pi) := w_i(EG[V]) \in H^i(BG;\mathbb{F}_2) \cong H^i(G;\mathbb{F}_2),
\]
and we write $w(\pi)=\sum_{i\ge0} w_i(\pi)$ for the total
Stiefel--Whitney class.
\end{definition} 
\noindent One has that $w_0(\pi)=1$ and $w_i(\pi)=0$ for
$i>\dim V$. Given an orthogonal representation $\pi$, we shall set $w_i(\pi)$ to be the $i$-th Stiefel-Whitney class of the real representation $\pi_0$. 

\par The Stiefel--Whitney classes $w_i(\pi)$ satisfy a number of key properties:
\begin{enumerate}
    \item[(a)] for any group homomorphism $\varphi:H\to G$, one has that
\[
w(\pi\circ\varphi)=\varphi^* w(\pi).
\]
\item[(b)] For orthogonal representations $\pi_1$ and $\pi_2$ of $G$, we have that
\[w(\pi_1\oplus\pi_2)=w(\pi_1)\cup w(\pi_2).\]
\item[(c)] The first Stiefel--Whitney class satisfies
$w_1(\pi)=w_1(\det\pi)$. In particular, $w_1(\pi)=0$ if and only if $\det \pi=1$. 
\item[(d)] The class $w_2(\pi)$ is a cohomological obstruction to lifting representations. The orthogonal group $\op{O}(V)$ admits a $2$-fold double cover. When $w_1(\pi)=0$, we have that $w_2(\pi)=0$ if and only if $\pi:G\to \op{O}(V)$ lifts to a homomorphism $\widetilde{\pi}:G\rightarrow\op{Pin}(V)$ as depicted:
\[
\begin{tikzpicture}[node distance = 2.5 cm, auto]
    \node at (0,0) (G) {$G$};
    \node (A) at (3,0) {$\op{O}(V)$.};
    \node (B) at (3,2) {$\op{Pin}(V)$};
    \draw[->] (G) to node [swap]{$\pi$} (A);
    \draw[->] (B) to node{} (A);
    \draw[->] (G) to node {$\widetilde{\pi}$} (B);
\end{tikzpicture}
\]

\end{enumerate}
The \emph{obstruction class} is the Stiefel--Whitney class $w_i(\pi)$ with minimal $i$ such that $w_i(\pi)\neq 0$. The obstruction class is always a power of $2$ (cf. \cite[p.~50]{Benson}).
\begin{definition}
    Let $(\pi, V)$ be an irreducible orthogonal representation for which $w_1(\pi)=0$. Then $\pi$ is said to be \emph{spinorial} if $\pi$ lifts to $\widetilde{\pi}$ as above.
\end{definition}

\subsection{Irreducible representations of $S_n$}
\par We briefly recall some relevant facts from the representation theory of the
symmetric group \(S_n\), which will be used throughout the paper.
Let \(n\) be a positive integer and let
\(\lambda=(\lambda_1,\lambda_2,\dots,\lambda_k)\) be a partition of \(n\),
with \(\lambda_1\geq \lambda_2\geq \cdots \geq \lambda_k>0\).
We write \(\lambda\vdash n\) to indicate that \(\lambda\) is a partition of
\(n\). To each partition \(\lambda\vdash n\) one associates the \emph{Specht module} $\pi_\lambda$, which is an irreducible complex
representation of \(S_n\). This correspondence $\lambda\mapsto \pi_\lambda$ gives a bijection between the set of partitions of
\(n\) and the set \(\op{Irr}(S_n)\) of irreducible complex representations of
\(S_n\). We denote by \(\chi^\lambda\) the character of \(\pi_\lambda\).
The degree of \(\pi_\lambda\) is given by the hook-length formula,
\begin{equation}\label{hooklengthformula}\mathfrak{f}_\lambda:=\dim \pi_\lambda=\frac{n!}{\prod_{(i,j)\in\lambda} h_{i,j}},
\end{equation}
where \(h_{i,j}\) denotes the hook length of the box \((i,j)\) in the Young
diagram of \(\lambda\). 

\par Let $\ell$ be a prime. For a nonzero integer $n$, we write $v_\ell(n)$ for the $\ell$-adic valuation of $n$, that is, the largest integer $r \ge 0$ such that $\ell^r \mid n$. This extends in the usual way to rational numbers by setting
\[
v_\ell\!\left(\frac{a}{b}\right) = v_\ell(a) - v_\ell(b),
\qquad a,b \in \Z,\; b \neq 0.
\]
The starting point for our investigations is the following statistical result which implies that, for a fixed prime $\ell$, the quantity $v_\ell(\mathfrak{f}_\lambda)$ grows very quickly \emph{on average} as $\lambda$ ranges over partitions of large size. Let $p(n)$ be the number of partitions of a natural number $n$.
\begin{theorem}[Ganguly, Prasad, Spallone]\label{GPS thm 1}
    For every prime number $\ell$ and $r>0$,
$$
\lim _{n \rightarrow \infty} \frac{\#\left\{\lambda \vdash n \mid v_{\ell}\left(\mathfrak{f}_{\lambda}\right)<r+\log _{\ell} n\right\}}{p(n)}=0 .
$$
\end{theorem}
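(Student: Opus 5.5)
The plan is to use Legendre's formula together with the standard description of hook lengths via $\ell$-cores and $\ell$-quotients, and then a counting argument over random partitions. First, $v_\ell(n!) = (n - s_\ell(n))/(\ell-1)$, where $s_\ell(n)$ is the base-$\ell$ digit sum. Next, the number of hooks of $\lambda$ with length divisible by $\ell^k$ equals $|\lambda^{(k)}|$, the total size of the $\ell^k$-quotient of $\lambda$. Equivalently, it equals $(n - |\mathrm{core}_{\ell^k}(\lambda)|)/\ell^k$, where $\mathrm{core}_{\ell^k}(\lambda)$ denotes the $\ell^k$-core. Summing over $k$ gives the exact formula
\[
v_\ell(\mathfrak{f}_\lambda)=\sum_{k\ge1}\left(\left\lfloor \frac{n}{\ell^k}\right\rfloor-\frac{n-|\mathrm{core}_{\ell^k}(\lambda)|}{\ell^k}\right)
=\sum_{k\ge1}\left(\frac{|\mathrm{core}_{\ell^k}(\lambda)|}{\ell^k}-\left\{\frac{n}{\ell^k}\right\}\right),
\]
where $\{\cdot\}$ denotes the fractional part. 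The subtracted fractional parts sum to at most $\log_\ell n+O(1)$ over the relevant range $\ell^k\le n$, and for $\ell^k>n$ the core is $\lambda$ itself and the term vanishes. So it suffices to show that $\sum_{k} |\mathrm{core}_{\ell^k}(\lambda)|/\ell^k$ exceeds $2\log_\ell n + r + C$ for almost all $\lambda$. In fact it is enough to find a single $k$ with $|\mathrm{core}_{\ell^k}(\lambda)|/\ell^k$ large.

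The key probabilistic input concerns the $t$-core of a uniformly random partition of $n$ with $t$ fixed or slowly growing. The number of $t$-cores of size $m$ is at most polynomial in $m$, roughly $m^{t-1}$, while the number of partitions of $n$ whose $t$-core has size $m$ equals $\#\{t\text{-cores of size } m\}$ times the number of $t$-multipartitions of $(n-m)/t$. Comparing with $p(n)\sim e^{\pi\sqrt{2n/3}}/(4\sqrt3 n)$, via the asymptotics of $t$-multipartition counts (generating function $\prod(1-x^j)^{-t}$), the typical $t$-core size turns out to be of order $\sqrt n$. More precisely, the proportion of partitions whose $t$-core has size at most $A$ tends to $0$ for every fixed $A$, and I expect that $A$ can even be taken of order $\sqrt n$ times a small constant.

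Now choose $t=\ell^k$ with $k$ fixed, so that $t$ is a constant depending on $r$ and $\ell$. Then $|\mathrm{core}_t(\lambda)|/t \ge c\sqrt n/t$ for almost all $\lambda$. Since $\sqrt n$ dominates $2\log_\ell n + r + C$, the bound holds for almost all $\lambda$, and the theorem follows.

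The main obstacle is the quantitative estimate that the $t$-core is typically large. To prove it, I would bound
\[
\sum_{m\le \varepsilon\sqrt n} c_t(m)\, p_t\!\left(\frac{n-m}{t}\right)
\]
against $p(n)$, where $c_t(m)$ is the number of $t$-cores of size $m$ and $p_t$ counts $t$-multipartitions. Meinardus-type asymptotics give $\log p_t(N)\sim \pi\sqrt{2tN/3}$. With $N\approx n/t$ this exponent matches $\pi\sqrt{2n/3}$, so the comparison must be made at the level of the polynomial prefactors: these differ by a power $n^{-(t-1)/4}$, roughly, which must beat the polynomial count $m^{t-1}\le n^{(t-1)/2}\varepsilon^{t-1}$. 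Getting these exponents right, perhaps by choosing $t=\ell^k$ suitably and using the exact prefactor in Meinardus' theorem, is where I would concentrate the care. Alternatively, I could cite the known result (Lulov–Pak, or Ganguly–Prasad–Spallone's own argument) that the typical $t$-core has size of order $\sqrt n$.
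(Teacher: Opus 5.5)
Your reduction is sound and is in substance the route behind the result the paper cites. The identity $v_\ell(\mathfrak{f}_\lambda)=\sum_{k\ge 1}\bigl(|\mathrm{core}_{\ell^k}(\lambda)|/\ell^k-\{n/\ell^k\}\bigr)$ is a repackaging of Macdonald's $\ell$-core tower formula, since $|\mathrm{core}_{\ell^k}(\lambda)|=\sum_{j<k}\beta_j\ell^j$ in terms of the row sums $\beta_j$ of the tower. Each summand is a nonnegative integer, so one large core suffices.

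The gap is in the probabilistic step. The claim that $|\mathrm{core}_t(\lambda)|\ge c\sqrt n$ for almost all $\lambda$ is false for every fixed $c>0$. The number of $t$-cores of size at most $A$ is $\asymp A^{(t-1)/2}$ (lattice points in a $(t-1)$-dimensional ellipsoid of radius $\asymp\sqrt A$). When $A=c\sqrt n$ this exactly cancels the factor $n^{-(t-1)/4}$ coming from Meinardus. So the proportion of $\lambda\vdash n$ with $|\mathrm{core}_t(\lambda)|\le c\sqrt n$ tends to a positive limit, of order $c^{(t-1)/2}$ for small $c$; indeed $|\mathrm{core}_t(\lambda)|/\sqrt n$ has a nondegenerate Gamma limit law. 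The estimate you actually propose, $c_t(m)\lesssim m^{t-1}$ summed over $m\le\varepsilon\sqrt n$, gives roughly $\varepsilon^t n^{t/2}\cdot n^{-(t-1)/4}\to\infty$. No choice of $t=\ell^k$ helps, because the exponent mismatch occurs for every $t$. Your remark about fixed $A$ is true but too weak, since the threshold you need grows with $n$.

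The missing observation is that you never need the $\sqrt n$ scale, only a threshold of size $O(\log n)$, and there the crude bounds already win. Take $t=\ell$. Nonnegativity of the summands gives $v_\ell(\mathfrak{f}_\lambda)\ge|\mathrm{core}_\ell(\lambda)|/\ell-1$, so it suffices that $|\mathrm{core}_\ell(\lambda)|\ge A_n:=\ell(r+1+\log_\ell n)$. Now count the exceptional $\lambda$:
\begin{itemize}
\item each $\ell$-core $\kappa$ is the core of at most $p_\ell(\lfloor n/\ell\rfloor)$ partitions of $n$, where $p_\ell$ counts $\ell$-multipartitions and is nondecreasing;
\item the number of $\ell$-cores of size at most $A_n$ is at most $(A_n+1)\,p(\lfloor A_n\rfloor)=n^{o(1)}$;
\item Meinardus gives $p_\ell(\lfloor n/\ell\rfloor)/p(n)\ll n^{-(\ell-1)/4}$.
\end{itemize}
Hence the proportion of exceptional $\lambda$ is at most $n^{o(1)-(\ell-1)/4}\to 0$, as $\ell\ge 2$. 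With this replacement your argument proves the theorem. Alternatively, you could cite a correctly stated distributional result such as the Gamma limit above, which gives $|\mathrm{core}_t(\lambda)|\ge g(n)$ for almost all $\lambda$ whenever $g(n)=o(\sqrt n)$.
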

\begin{proof}
    For the proof of this result, please see \cite[Theorem A]{GPS}. The proof relies on a theorem of Macdonald \cite{MacdonaldBLMS} relating $v_\ell(\mathfrak{f}_\lambda)$ to the $\ell$-core tower of $\pi_\lambda$. 
\end{proof}
\par Given functions $f,g:\Z_{\ge 1}\to\mathbb{R}_{\ge 0}$, we recall that
\[
f(n)\sim g(n)
\quad\text{if}\quad
\lim_{n\to\infty}\frac{f(n)}{g(n)}=1,
\]
and that
\[
f(n)=o(g(n))
\quad\text{if}\quad
\lim_{n\to\infty}\frac{f(n)}{g(n)}=0.
\]
These notations allow one to compare the asymptotic growth of arithmetic functions in a precise manner.
\par A fundamental result of Hardy and Ramanujan, obtained via the circle method, asserts that the partition function satisfies the asymptotic formula
\[
p(n)\sim \frac{1}{4n\sqrt{3}}\exp\!\left(\pi\sqrt{\frac{2n}{3}}\right).
\]
In this context, Theorem~\ref{GPS thm 1} asserts that for any fixed prime $\ell$ and integer $r$, the set
\[
\{\lambda \vdash n : v_\ell(\mathfrak{f}_\lambda) < r + \log_\ell n\}
\]
is negligible compared to the full set of partitions of $n$. More precisely, combining Theorem~\ref{GPS thm 1} with the asymptotic for $p(n)$, we obtain the equivalent formulation
\[
\#\{\lambda \vdash n : v_\ell(\mathfrak{f}_\lambda) < r + \log_\ell n\}
=
o\!\left(\frac{1}{4n\sqrt{3}}\exp\!\left(\pi\sqrt{\frac{2n}{3}}\right)\right).
\]
\noindent This phenomenon plays a crucial role in the asymptotic analysis of character degrees and character values, where the quantities $\mathfrak{f}_\lambda$ appear naturally as building blocks.
\par Given $k\leq n$ and a partition $\mu=(\mu_1, \dots, \mu_m)$ of $k$, denote by $\chi_{\mu}^{\lambda}$ the value of $\chi^{\lambda}$ at an element of cycle type $\left(\mu_{1}, \ldots, \mu_{m}, 1^{n-k}\right)$. Let $(n)_{k}$ denote the falling factorial
\[(n)_{k}=n(n-1) \cdots(n-k+1).\] A theorem of Lassalle \cite[Theorem 6]{Lassalle} defines a rational number $A_{\mu}^{\lambda}$ such that
\begin{equation}\label{lassalle}
\chi_{\mu}^{\lambda}=\frac{f_{\lambda}}{(n)_{k}} A_{\mu}^{\lambda}.
\end{equation}
It is shown in \cite[section 4]{GPS} that $A_{\mu}^{\lambda}$ is in fact an integer, and this leads to the following result.
\begin{theorem}[Ganguly, Prasad, Spallone]
Let $k$ and $d$ be positive integers, and let $\mu$ be a partition of $k$. Then
\[
\lim_{n \to \infty}
\frac{
\#\left\{\lambda \vdash n \; \middle| \; \chi_\mu^\lambda \text{ is divisible by } d \right\}
}{p(n)}
= 1.
\]
\end{theorem}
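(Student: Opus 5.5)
The approach is to combine Lassalle's formula \eqref{lassalle} with the integrality of $A_\mu^\lambda$ and with Theorem~\ref{GPS thm 1}. Fix $k$, $d$ and $\mu\vdash k$. For $n\ge k$, \eqref{lassalle} and the integrality of $A^\lambda_\mu$ (\cite[section 4]{GPS}) give
\[
(n)_k\,\chi^\lambda_\mu=\mathfrak{f}_\lambda\,A^\lambda_\mu ,
\]
with all three of $(n)_k$, $\chi^\lambda_\mu$, $A^\lambda_\mu$ integers. If $A^\lambda_\mu=0$, then $\chi^\lambda_\mu=0$, which is divisible by $d$. Otherwise, for each prime $\ell$ we get
\[
v_\ell(\chi^\lambda_\mu)\ \ge\ v_\ell(\mathfrak{f}_\lambda)-v_\ell((n)_k).
\]
So it suffices to show that, for each prime $\ell\mid d$ with $\ell^{e}\,\|\,d$, the proportion of $\lambda\vdash n$ with $v_\ell(\mathfrak f_\lambda)-v_\ell((n)_k)<e$ tends to $0$. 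There are only finitely many such primes, and a finite union of sets of density zero has density zero. The complement then consists of $\lambda$ with $\ell^e\mid\chi^\lambda_\mu$ for every $\ell\mid d$, i.e.\ $d\mid\chi^\lambda_\mu$.

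The key step is to bound $v_\ell((n)_k)$. Each factor $n-i$, $0\le i<k$, is at most $n$ and positive, so $v_\ell(n-i)\le\log_\ell n$. Hence
\[
v_\ell((n)_k)\le k\log_\ell n .
\]
This is $k$ times the threshold in Theorem~\ref{GPS thm 1}, so that theorem as stated (threshold $r+\log_\ell n$) does not directly suffice when $k\ge2$. This is the main obstacle.

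There are two ways around it. The first is a sharper bound on the falling factorial. Among $k$ consecutive integers, at most one is divisible by $\ell^{j}$ once $\ell^j\ge k$. So
\[
v_\ell((n)_k)\le v_\ell(k!)+\log_\ell n+O(1),
\]
because the exceptional factor contributes at most $\log_\ell n$ and the others contribute only from powers $\ell^j<k$, which total $O_k(1)$. More precisely, $v_\ell((n)_k)=v_\ell\binom nk+v_\ell(k!)$, and by Kummer's theorem $v_\ell\binom nk$ is at most the number of base-$\ell$ digits of $n$, namely $\le\log_\ell n+1$. Thus
\[
v_\ell((n)_k)\le \log_\ell n+1+v_\ell(k!).
\]
We then apply Theorem~\ref{GPS thm 1} with $r=e+1+v_\ell(k!)$, a constant depending only on $d$, $k$ and $\ell$. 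For all but a proportion $o(1)$ of $\lambda\vdash n$,
\[
v_\ell(\mathfrak f_\lambda)\ge r+\log_\ell n\ge v_\ell((n)_k)+e ,
\]
hence $v_\ell(\chi^\lambda_\mu)\ge e$.

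Putting these together, the proportion of $\lambda\vdash n$ with $d\nmid\chi^\lambda_\mu$ is bounded by the sum over the finitely many primes $\ell\mid d$ of the proportions excluded by Theorem~\ref{GPS thm 1}, and each of these tends to $0$. The only delicate points are the Kummer-type bound on $v_\ell((n)_k)$ and the trivial case $A^\lambda_\mu=0$; everything else is bookkeeping.
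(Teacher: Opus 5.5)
Your proof is correct and follows the paper's own argument: Lassalle's identity together with the integrality of $A_\mu^\lambda$ gives $v_\ell(\chi_\mu^\lambda)\ge v_\ell(\mathfrak{f}_\lambda)-v_\ell((n)_k)$, a bound on $v_\ell((n)_k)$ then reduces everything to Theorem~\ref{GPS thm 1}, and you finish with a union over the finitely many primes $\ell\mid d$. The one real difference favours you: your Kummer bound $v_\ell((n)_k)\le \log_\ell n+1+v_\ell(k!)$ has coefficient $1$ on $\log_\ell n$, so it matches the threshold $r+\log_\ell n$ of Theorem~\ref{GPS thm 1} for every prime, whereas the paper's Legendre-based bound $k+(\ell-1)\log_\ell n$ matches it only for $\ell=2$ (redoing the Legendre computation with the digit sum divided by $\ell-1$ gives a bound of the same form as yours).
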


\begin{proof}
This is the principal result of \emph{loc.\ cit.} We sketch the argument for the convenience of the reader.
\par Fix a prime $q$. By the identity~\eqref{lassalle}, one has the lower bound
\[
v_q\!\left(\chi_\mu^\lambda\right)
\ge
v_q\!\left(\mathfrak{f}_\lambda\right)
-
v_q\!\left((n)_k\right).
\]
Applying Legendre's formula,
\[
v_q(n!) = \frac{n - a_q(n)}{q-1},
\]
where $a_q(n)$ denotes the sum of the base-$q$ digits of $n$, we obtain
\[
v_q\!\left((n)_k\right)
=
v_q\!\left(\frac{n!}{(n-k)!}\right)
=
\frac{k + a_q(n-k) - a_q(n)}{q-1}.
\]
Since $a_q(n) \le (q-1)\log_q n$, it follows that
\[
v_q\!\left((n)_k\right)
\le
k + (q-1)\log_q n.
\]
Consequently, if
\[
v_q\!\left(\mathfrak{f}_\lambda\right)
\ge
m + (q-1)\log_q n,
\]
then
\[
v_q\!\left(\chi_\mu^\lambda\right)
\ge
m - k.
\]
Taking $m = k + b$ and invoking Theorem~\ref{GPS thm 1}, we deduce that
\[
\lim_{n \to \infty}
\frac{
\#\left\{\lambda \vdash n \; \middle| \; v_q\!\left(\chi_\mu^\lambda\right) \le b \right\}
}{p(n)}
=
0.
\]
Since this holds for each prime divisor $q$ of $d$, the stated result follows.
\end{proof}
\begin{proposition}
    The irreducible representations $\pi_\lambda$ of $S_n$ are all realizable over $\mathbb{R}$ (in fact over $\Q$).
\end{proposition}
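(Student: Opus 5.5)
The plan is to realize each Specht module $\pi_\lambda$ concretely over $\Q$, so that the statement follows with no appeal to Frobenius--Schur theory. I will use Young's construction. Fix a Young tableau $t$ of shape $\lambda$. Let $R_t$ be its row group and $C_t$ its column group, both subgroups of $S_n$. Form the Young symmetrizer
\[
e_t=\Big(\sum_{\sigma\in R_t}\sigma\Big)\Big(\sum_{\tau\in C_t}\op{sgn}(\tau)\tau\Big)\in \Q[S_n].
\]
The standard facts are that $e_t^2=c_\lambda e_t$ for some nonzero rational $c_\lambda$, and that the left ideal $\Q[S_n]e_t$ is an irreducible $\Q[S_n]$-module. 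After extension of scalars to $\mathbb{C}$, this module becomes the Specht module $\pi_\lambda$. The key step is to verify that the irreducibility persists over $\mathbb{C}$, i.e.\ that $\Q$ is a splitting field. This follows from von Neumann's lemma: for every $x\in\Q[S_n]$, the product $e_t x e_t$ is a rational multiple of $e_t$. Consequently $\op{End}(\Q[S_n]e_t)\cong e_t\Q[S_n]e_t=\Q$. An endomorphism ring equal to $\Q$ implies absolute irreducibility, so $\mathbb{C}\otimes_\Q \Q[S_n]e_t$ remains irreducible. Its character is the one attached to $\lambda$ (equivalently, one checks it contains the permutation module $\Ind_{R_t}^{S_n}1$ with multiplicity one, in the usual dominance-order fashion).

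Tensoring the $\Q$-form with $\mathbb{R}$ gives a real representation $\pi_0$ with $\pi_0\otimes_\mathbb{R}\mathbb{C}\cong\pi_\lambda$. By the equivalence of (a) and (b) in the Proposition above, $\pi_\lambda$ is therefore orthogonal. To make the orthogonality explicit, I will average any positive definite inner product on the $\mathbb{R}$-form over $S_n$, which produces an invariant symmetric form. As a consistency check, all characters $\chi^\lambda$ are rational-valued, which agrees with each $\pi_\lambda$ being self-dual. I expect the main obstacle to be the von Neumann lemma $e_txe_t\in\Q e_t$. It reduces to showing that any $y$ with $\sigma y=y$ for $\sigma\in R_t$ and $y\tau=\op{sgn}(\tau)y$ for $\tau\in C_t$ is a scalar multiple of $e_t$. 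That in turn rests on the combinatorial lemma that if a permutation $g\notin R_tC_t$, then some transposition lies in $R_t\cap gC_tg^{-1}$. Alternatively, one can cite Fulton--Harris or James for this standard fact.
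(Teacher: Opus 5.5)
Your proof is correct and takes essentially the same route as the paper, which only remarks that the claim follows from the Young-tableau construction; your Young-symmetrizer argument (with $e_t\in\Q[S_n]$, so that $\mathbb{C}\otimes_\Q\Q[S_n]e_t=\mathbb{C}[S_n]e_t$) supplies the standard details behind that remark. One small slip: in your parenthetical it is $\Ind_{R_t}^{S_n}\mathbf{1}$ that contains $\pi_\lambda$ with multiplicity one, not the other way around.
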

\begin{proof}
    This result is well known and follows from how they are defined using Young tableaux. 
\end{proof}
\noindent As a consequence, all irreducible representations $\pi_\lambda$ of $S_n$ are orthogonal and the Stiefel Whitney classes $w_i(\pi)$ are defined.

\par Let $(\pi, V)$ be a complex representation of the symmetric group $S_{n}$, then \[\operatorname{det} \rho: S_{n} \rightarrow \mathbb{C}^{*}\] is either the trivial character or the sign character. One says that $(\pi, V)$ is a \emph{chiral representation} if $\operatorname{det} \rho$ is the sign character of $S_{n}$ and \emph{achiral otherwise}. Let $b(n)$ be the number of chiral irreducible representations of $S_n$.
\begin{theorem}[Ayyer, Prasad, Spallone]
     If $n$ is an integer having binary expansion
\begin{equation*}
n=\epsilon+2^{k_{1}}+2^{k_{2}}+\cdots+2^{k_{r}}, \quad \text{with}\quad \epsilon \in\{0,1\}, \quad 0<k_{1}<k_{2}<\cdots<k_{r},
\end{equation*}
then 
\[b(n)=2^{k_{2}+\cdots+k_{r}}\left(2^{k_{1}-1}+\sum_{v=1}^{k_{1}-1} 2^{(v+1)\left(k_{1}-2\right)-\binom{v}{2}}+\epsilon 2^{\binom{k_{1}}{2}}\right).\]
\end{theorem}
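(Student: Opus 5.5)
Since $\det\pi_\lambda$ is either trivial or the sign character, $\pi_\lambda$ is chiral exactly when $\det\pi_\lambda(\tau)=-1$ for a transposition $\tau$. The eigenvalues of $\pi_\lambda(\tau)$ are $\pm1$, and $\chi^\lambda(\tau)$ is their sum, so
\[
\det\pi_\lambda(\tau)=(-1)^{(\mathfrak f_\lambda-\chi^\lambda_{(2)})/2}.
\]
Thus $b(n)$ counts the partitions $\lambda\vdash n$ with $\mathfrak f_\lambda-\chi^\lambda_{(2)}\equiv 2\pmod 4$. The plan is to turn this into a $2$-adic condition and then count via $2$-core towers, in the spirit of Macdonald's theorem used in Theorem~\ref{GPS thm 1}.

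\textbf{Step 1: reduce to a degree condition.} By Frobenius' formula,
\[
\chi^\lambda_{(2)}=\mathfrak f_\lambda\,\frac{2c(\lambda)}{n(n-1)},
\]
where $c(\lambda)=\sum_{(i,j)\in\lambda}(j-i)$ is the content sum. Hence
\[
\mathfrak f_\lambda-\chi^\lambda_{(2)}=\mathfrak f_\lambda\Bigl(1-\tfrac{2c(\lambda)}{n(n-1)}\Bigr).
\]
The condition is therefore a statement about $v_2(\mathfrak f_\lambda)$ together with a parity or residue condition on $c(\lambda)$ relative to $\binom n2$. When $v_2(\mathfrak f_\lambda)\ge 2$ and $v_2(\chi^\lambda_{(2)})\ge2$, the representation is automatically achiral. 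So only partitions with small $2$-adic valuation of degree contribute, together with a handful of boundary cases.

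\textbf{Step 2: use the $2$-core tower.} By Macdonald, $v_2(\mathfrak f_\lambda)=\bigl(\sum_i w_i(\lambda)-a_2(n)\bigr)$, where $w_i$ is the total weight at level $i$ of the $2$-core tower. The partitions with $\mathfrak f_\lambda$ odd are those whose tower has $\sum_i w_i 2^i$ equal to the binary expansion of $n$ with exactly one node at each occupied level. Their count is $2^{k_1+\cdots+k_r}$, up to the $\epsilon$ factor. Partitions with $v_2(\mathfrak f_\lambda)=1$ come from moving one unit of weight from level $k_j$ down to two units at level $k_j-1$, or similar.

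I would then analyse how $\chi^\lambda_{(2)}$ modulo $4$ depends on the tower. The point to establish is that removing $2^{k}$-hooks for $k\ge2$ does not change the relevant residue. This should make the condition depend only on the smallest block $2^{k_1}$ and on $\epsilon$, and the higher levels then contribute the free factor $2^{k_2+\cdots+k_r}$. This factorisation (a multiplicativity of chirality over the binary digits of $n$) is what produces the shape of the formula.

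\textbf{Step 3: the base case $n=2^{k_1}+\epsilon$.} Here I would count chiral partitions directly. The term $2^{k_1-1}$ should come from odd-degree $\lambda$ (hooks and their tower analogues), of which exactly half are chiral. The sum over $v$ should come from partitions with $v_2(\mathfrak f_\lambda)=1$, organised by the level $v$ at which the tower splits. The $\epsilon\,2^{\binom{k_1}{2}}$ term accounts for the extra box when $n$ is odd, via the branching between $S_{n-1}$ and $S_n$.

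\textbf{Main obstacle.} I expect Steps 2–3 to be the hardest part: proving that the residue of $\chi^\lambda_{(2)}/2$ modulo $2$ factors through the core tower, and then carrying out the combinatorial count that yields $2^{(v+1)(k_1-2)-\binom v2}$. My first attempt would be induction on $k_1$ using the $2$-quotient bijection, under which $\lambda\mapsto(\text{core},\lambda^{(0)},\lambda^{(1)})$ and contents transform explicitly.
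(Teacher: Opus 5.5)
The paper gives no argument for this statement; it cites \cite[Theorem 1]{APS}, where the formula comes from characterizing the $2$-core towers of chiral partitions. Your reduction to the parity of $(\mathfrak f_\lambda-\chi^\lambda_{(2)})/2$ is correct, and so is the content formula $\chi^\lambda_{(2)}=\mathfrak f_\lambda c(\lambda)/\binom n2$. The inference at the end of Step 1, however, is wrong.

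In every case $v_2\binom n2=k_1-1$, so
\[
v_2(\chi^\lambda_{(2)})=v_2(\mathfrak f_\lambda)+v_2(c(\lambda))-(k_1-1).
\]
Hence $\chi^\lambda_{(2)}\equiv 2\pmod 4$ is possible even when $4\mid\mathfrak f_\lambda$. Writing $v=v_2(\mathfrak f_\lambda)$, the correct criterion for chirality is:
\begin{itemize}
\item $v=0$: $c(\lambda)/\binom n2\equiv 3\pmod 4$ (this ratio is automatically a $2$-adic unit);
\item $v=1$: $2^{k_1}\mid c(\lambda)$;
\item $2\le v\le k_1$: $v_2(c(\lambda))=k_1-v$;
\item $v>k_1$: never chiral.
\end{itemize}
So chiral partitions occur at every valuation up to $k_1$, not just at ``small'' ones plus a handful of exceptions. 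For $n=8$, the partition $(6,2)$ has $\mathfrak f_\lambda=20$, $c(\lambda)=14$ and $\chi^\lambda_{(2)}=10$, and $(20-10)/2=5$ is odd. In fact the twelve chiral partitions of $8$ split as $4+4+4$ according to $v_2(\mathfrak f_\lambda)=0,1,2$, matching $2^{k_1-1}$ and the $v=1$ and $v=2$ summands. For $n=5$, the term $\epsilon\,2^{\binom{k_1}{2}}=2$ counts $(4,1)$ and $(2,1^3)$, both with $\mathfrak f_\lambda=4$, i.e.\ $v_2(\mathfrak f_\lambda)=k_1$. So it is not a branching correction. Your Step~3 attribution of the sum over $v$ to partitions with $v_2(\mathfrak f_\lambda)=1$ is therefore wrong, and a count organized that way would undercount.

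The larger gap is that the counting, which is the whole content of the theorem, is not supplied. You would need the joint distribution of Macdonald's valuation and of $c(\lambda)$ modulo $2^{k_1+1}$, over $2$-core towers with $\sum_i w_i=a_2(n)+v$. From that you would then have to extract $2^{(v+1)(k_1-2)-\binom v2}$ and the factor $2^{k_2+\cdots+k_r}$.

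The invariance you propose to use in Step~2 cannot hold as stated. A $2^k$-rim hook whose top cell has content $a$ contributes $2^k a-2^{k-1}(2^k-1)$ to $c(\lambda)$. This has $2$-adic valuation exactly $k-1$, so removing it changes $c(\lambda)$ modulo $2^{k_1+1}$ whenever $k\le k_1+1$. For example, the eight hooks of $8$ all become $\emptyset$ after removing their $8$-hook; their towers are empty except for a single $(1)$ at level $3$. Yet exactly four of them are chiral.

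Any factorization over binary digits must therefore be relative to $k_1$, and must track how the placement of nodes at levels near $k_1$ affects $c(\lambda)$ modulo $2^{k_1+1}$. That is where the real work lies, and your proposal leaves it as an expectation rather than an argument.
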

\begin{proof}
    This result is \cite[Theorem 1]{APS}. 
\end{proof}
\par We record an immediate consequence of the general philosophy that low-degree characteristic classes become asymptotically trivial in large rank. In the case of the symmetric group, this phenomenon already appears in a particularly transparent form.
\par Let $\pi_\lambda$ denote the irreducible representation of the symmetric group $S_n$ corresponding to a partition $\lambda \vdash n$, realized over $\mathbb R$. Recall that the first Stiefel--Whitney class $w_1(\pi_\lambda)$ is the reduction modulo $2$ of the determinant character. In particular, the condition $w_1(\pi_\lambda)=0$ is equivalent to the statement that $\pi_\lambda$ is \emph{achiral}, i.e.\ has trivial determinant.

\begin{theorem}[Ayyer, Prasad, Spallone]
Given a natural number $n$, let $p(n)$ denote the number of partitions of $n$. Then
\[
\lim_{n\to\infty}
\frac{\#\{\lambda \vdash n : \det \pi_\lambda = 1\}}{p(n)} = 1.
\]
\end{theorem}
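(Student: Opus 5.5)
The proof reduces the determinant of $\pi_\lambda$ to a single character value and then applies the divisibility theorem of Ganguly--Prasad--Spallone stated above.

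Since $S_n$ has abelianization $\{\pm 1\}$, the determinant $\det \pi_\lambda$ is either trivial or the sign character. It is therefore detected by its value on a transposition $s=(1\,2)$: we have $\det\pi_\lambda = 1$ if and only if $\det \pi_\lambda(s)=1$.

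To compute $\det\pi_\lambda(s)$, note that $s$ is an involution, so $\pi_\lambda(s)$ is diagonalizable with eigenvalues $\pm 1$. Let $m_+$ and $m_-$ be the multiplicities of $+1$ and $-1$. Then
\[
m_++m_-=\mathfrak f_\lambda, \qquad m_+-m_-=\chi^\lambda(s)=\chi^\lambda_{(2)},
\]
so $m_-=(\mathfrak f_\lambda-\chi^\lambda_{(2)})/2$ and
\[
\det\pi_\lambda(s)=(-1)^{m_-}.
\]
Hence $\pi_\lambda$ is achiral precisely when $\mathfrak f_\lambda\equiv \chi^\lambda_{(2)}\pmod 4$. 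In particular, it suffices to have $4\mid \mathfrak f_\lambda$ and $4\mid\chi^\lambda_{(2)}$.

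Next, note that $\mathfrak f_\lambda=\chi^\lambda_\mu$ for $\mu$ the empty partition, or equivalently the partition $(1)$ of $k=1$, whose associated cycle type is $(1^n)$. Apply the Ganguly--Prasad--Spallone divisibility theorem with $d=4$ twice: once with $\mu=(1)$, which handles $\mathfrak f_\lambda$, and once with $\mu=(2)$, which handles $\chi^\lambda_{(2)}$. Each condition then holds for a proportion of $\lambda\vdash n$ tending to $1$.

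For the $\mu=(1)$ case one can also argue directly. Theorem~\ref{GPS thm 1} with $\ell=2$ shows that $v_2(\mathfrak f_\lambda)\ge 2$ for almost all $\lambda$, since $r+\log_2 n\ge 2$ once $n\ge 4$.

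Finally, the union of the two exceptional sets has density tending to $0$. So
\[
\#\{\lambda\vdash n:\det\pi_\lambda=1\}/p(n)\to 1.
\]

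The only step requiring care is the parity computation of $m_-$, which is elementary. The real content lies in the divisibility theorem already quoted.
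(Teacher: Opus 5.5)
Your argument is correct, but it does not follow the route the paper relies on. The paper gives no proof of its own. It quotes the statement from Ayyer--Prasad--Spallone immediately after their closed formula for the number $b(n)$ of chiral partitions, and the limit follows at once from that formula. Indeed, $r$ and all $k_i$ are at most $\log_2 n$, so $b(n)\le 2^{O((\log n)^2)}$, which is negligible against $p(n)\sim\frac{1}{4n\sqrt{3}}\exp(\pi\sqrt{2n/3})$.

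You instead use the involution parity computation (the $S_n$ analogue of Lemma~\ref{lemma det involution} and \eqref{w_1 basic fact}) to turn achirality into the congruence $\mathfrak f_\lambda\equiv\chi^\lambda_{(2)}\pmod 4$. You then feed in asymptotic $2$-adic divisibility.

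The counting route gives three things: an exact answer, a structural description of chiral partitions via $2$-core towers, and an explicit rate, since the chiral proportion is at most $\exp\bigl(-\pi\sqrt{2n/3}+O((\log n)^2)\bigr)$. Your route yields only $o(1)$. In exchange, it needs no enumeration, and it is exactly the template that Ganguly--Spallone use for $w_2$ and that Section~\ref{s 3} uses for $\GL_n(\F_q)$.

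You can also make your argument lighter. Frobenius' formula $\chi^\lambda_{(2)}=\frac{2\,\mathfrak f_\lambda}{n(n-1)}\sum_{(i,j)\in\lambda}(j-i)$ gives $v_2(\chi^\lambda_{(2)})\ge v_2(\mathfrak f_\lambda)+1-\log_2 n$ whenever $\chi^\lambda_{(2)}\neq 0$. So a single application of Theorem~\ref{GPS thm 1} with $\ell=2$ and $r=1$ already gives both $4\mid\mathfrak f_\lambda$ and $4\mid\chi^\lambda_{(2)}$ for almost all $\lambda$. This avoids Lassalle's formula and the general divisibility theorem entirely.
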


\noindent
In other words, asymptotically almost all Specht modules are achiral. Equivalently, the obstruction class $w_1(\pi_\lambda)$ vanishes for $100\%$ of partitions as $n \to \infty$.

\par

A deeper refinement of this statement is obtained by considering the second Stiefel--Whitney class. Using an explicit formula expressing $w_2(\pi_\lambda)$ in terms of character values on elements of order $2$, Ganguly and Spallone \cite{GangulySpallone} show that $w_2=0$ with probability $1$.

\begin{theorem}[Ganguly, Spallone]
One has
\[
\lim_{n\to\infty}
\frac{
\#\{\lambda \vdash n : w_1(\pi_\lambda)=0 \ \text{and} \ w_2(\pi_\lambda)=0\}
}{p(n)}
= 1.
\]
\end{theorem}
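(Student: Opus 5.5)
The plan follows the Ganguly--Spallone approach: express $w_1$ and $w_2$ of $\pi_\lambda$ through character values at involutions, then use the divisibility theorem of Ganguly--Prasad--Spallone to show that these values are highly $2$-divisible for almost all $\lambda$.

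For $w_1$, observe that $\det\pi_\lambda$ is trivial if and only if a transposition $s$ acts with eigenvalue $-1$ with even multiplicity. That multiplicity is $(\mathfrak{f}_\lambda-\chi^\lambda_{(2)})/2$. So $w_1(\pi_\lambda)=0$ if and only if $\mathfrak{f}_\lambda\equiv\chi^\lambda_{(2)}\pmod 4$, and this holds as soon as both numbers are divisible by $4$. By Theorem \ref{GPS thm 1}, $v_2(\mathfrak{f}_\lambda)\to\infty$ for almost all $\lambda$. The divisibility theorem with $\mu=(2)$ and $d=4$ handles $\chi^\lambda_{(2)}$.

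For $w_2$, use the structure of $H^*(S_n;\F_2)$ in low degrees, which is detected on elementary abelian $2$-subgroups generated by disjoint transpositions. In degree $2$ the relevant restrictions are to subgroups $\langle s_1,s_2\rangle$ with $s_1=(12)$, $s_2=(34)$, and to $\langle s_1\rangle$. On such a subgroup the representation splits into characters $\psi$. Write $m_\psi$ for the multiplicity of $\psi$; it is a linear combination, with denominator $4$, of the values $\mathfrak{f}_\lambda$, $\chi^\lambda_{(2)}$ and $\chi^\lambda_{(2,2)}$. By the Whitney sum formula (property (b)), the restricted total class is $\prod_\psi(1+w_1(\psi))^{m_\psi}$. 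Its degree-$2$ part is then a polynomial in the degree-one generators whose coefficients are binomial-type expressions $\binom{m_\psi}{2}$ and $m_\psi m_{\psi'}$ modulo $2$. Hence $w_2(\pi_\lambda)=0$ once every $m_\psi$ is divisible by $4$. It suffices that $\mathfrak{f}_\lambda$, $\chi^\lambda_{(2)}$ and $\chi^\lambda_{(2,2)}$ are all divisible by $16$. Alternatively, one can cite the explicit formula of Ganguly--Spallone expressing $w_2$ through these values modulo $8$.

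Finally, apply the divisibility theorem with $d=16$ and $\mu\in\{(2),(2,2)\}$, together with Theorem \ref{GPS thm 1} for $\mathfrak{f}_\lambda$. The union of the finitely many exceptional sets has density $0$ in $p(n)$, which gives the limit $1$.

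The main obstacle is the detection step. One must justify that $w_2$ is determined by restrictions to these elementary abelian subgroups, or else invoke the Ganguly--Spallone formula directly. I would first try citing that formula and check that the congruences it needs follow from high $2$-divisibility.
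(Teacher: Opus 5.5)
Your proposal is correct and follows the route the paper indicates (the paper gives no proof and only cites Ganguly--Spallone): express $w_1$ and $w_2$ through the character values at $1$, $(12)$ and $(12)(34)$, then use Theorem~\ref{GPS thm 1} and the Ganguly--Prasad--Spallone divisibility theorem to make these values highly $2$-divisible for almost all $\lambda$. The detection step you flag does hold in degree $2$. For $n\ge 4$ one has $H^2(S_n;\F_2)\cong\F_2^2$, since $H_1(S_n;\Z)\cong H_2(S_n;\Z)\cong\Z/2\Z$. Moreover, the square of the sign class and $w_2$ of the permutation representation restrict on $\langle(12),(34)\rangle$ to the linearly independent classes $x_1^2+x_2^2$ and $x_1x_2$. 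Hence restriction to that subgroup is injective, and your Whitney-sum argument goes through.
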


\noindent
Thus, not only are almost all Specht modules achiral, but they are in fact \emph{spinorial}, in the sense that both $w_1$ and $w_2$ vanish. From a topological perspective, this means that the associated real representations admit lifts to the spin group with probability tending to $1$ as $n\to\infty$.

\par These results provide a guiding analogy for the situation of finite groups of Lie type considered in this paper. In both settings, the key input is an explicit expression for low-degree Stiefel--Whitney classes in terms of character values at elements of order $2$, combined with asymptotic divisibility properties of these character values.
\subsection{Green's parametrization and duality for $\op{GL}_n(\F_q)$}

We fix a finite field $k\simeq \F_q$ and an algebraic closure $\bar{k}$. Denote by $\op{G}_k$ the absolute Galois group $\op{Gal}(\bar{k}/k)$. Let $n\geq 2$ be a natural number. In this section we recall the parametrization of the irreducible complex
representations of $G_n := \op{GL}_n(k)$ due to Green \cite{Green}. Let $V$ be an $n$-dimensional $k$ vector space which is also a $k$-variety and identify $G_n$ with $\op{Aut}_k(V_k)$. We give a self-contained account sufficient for our
purposes, and we prove the precise behavior of irreducible representations
under contragredient duality.
\par We recall the parametrization of conjugacy classes in $G_n$.  Let $f(t)=t^d-a_{d-1}t^{d-1}-\cdots-a_0\in k[t]$
be a monic polynomial of degree $d$.  Define the $d\times d$ matrix
\[
U(f)=
\begin{pmatrix}
0 & 1 &  &  & \\
  & 0 & 1 &  & \\
  &   & \ddots & \ddots & \\
  &   &        & 0 & 1\\
a_0 & a_1 & \cdots & a_{d-2} & a_{d-1}
\end{pmatrix}.
\]
For an integer $m\ge1$ let $U_m(f)$ be the block matrix
\[
U_m(f)=
\begin{pmatrix}
U(f) & I_d &  &  \\
     & U(f) & I_d &  \\
     &      & \ddots & \ddots \\
     &      &        & U(f)
\end{pmatrix},
\]
with $m$ diagonal blocks $U(f)$ and identity blocks $I_d$ on the
superdiagonal.  If $\lambda=(\lambda_1,\dots,\lambda_r)$ is a partition
of a positive integer $k$, we define
\[
U_\lambda(f)=\operatorname{diag}\bigl(U_{\lambda_1}(f),\dots,U_{\lambda_r}(f)\bigr).
\]
The characteristic polynomial of $U_\lambda(f)$ is $f(t)^k$.

\par Let $\Phi$ be the set of irreducible monic polynomials in $f(T)\in k[T]$ such that $f(T)\neq T$. One may identify $\Phi$ with the set of orbits of $\op{G}_k$ acting on $\bar{k}^\times$. We write $d(f)$ for the degree of a polynomial $f$. The
conjugacy classes in $G_n$ are parametrized by functions $\mu \in \mathfrak{X}_n$. If $\mu(f_i)=\nu_i$, the corresponding conjugacy class will be denoted
\[
c=(f_1^{\nu_1}f_2^{\nu_2}\cdots f_N^{\nu_N}).
\]
\par For $f\in\Phi$ define the reciprocal polynomial
\[
f^*(T)=T^{\deg f}f(1/T).
\]
Let
\[
\mathfrak X_n=\left\{\mu:\Phi\to\Lambda\;\middle|\;\sum_{f\in\Phi}(\deg f)|\mu(f)|=n\right\},
\]
where $\Lambda$ denotes the set of partitions and $|\lambda|$ the size of a partition $\lambda$. There is a natural bijection between the set of conjugacy classes of $G_n$ and $\mathfrak X_n$, see \cite[Lemma 2.3]{Springer} for further details. For $\mu\in\mathfrak X_n$ let $\chi_\mu$ denote the irreducible character of $G_n$ associated to $\mu$ in Green's parametrization. Define $\mu'\in\mathfrak X_n$ by
\[
\mu'(f)=\mu(f^*).
\]

\par Let $n=s_1+s_2+\cdots+s_k$
be a partition of $n$ into positive integers and let $V_n$ be the
$n$--dimensional vector space over $k$.  For $1\le i\le k$ let $V^{(i)}$
be the subspace of $V_n$ consisting of vectors whose first
$s_1+\cdots+s_i$ coordinates are zero.  This gives a chain of subspaces
\[
V^{(0)} \supset  V^{(1)} \supset \cdots \supset V^{(k)} = 0 .
\]

Let $\mathfrak{H}_{s_1,\dots,s_k}$ be the subgroup of $G_n$ consisting of
all elements which leave this chain invariant.  In matrix form these are
the block upper triangular matrices
\[
A=
\begin{pmatrix}
A_{11} & A_{12} & \cdots & A_{1k}\\
0 & A_{22} & \cdots & A_{2k}\\
\vdots & \vdots & \ddots & \vdots\\
0 & 0 & \cdots & A_{kk}
\end{pmatrix},
\qquad
A_{ii}\in G_{s_i}.
\]

If $\alpha_i$ is a class function (in particular a character) of
$G_{s_i}$ for $1\le i\le k$, we define a class function $\psi$ on
$\mathfrak{H}_{s_1,\dots,s_k}$ by
\[
\psi(A)=\alpha_1(A_{11})\cdots\alpha_k(A_{kk}).
\]
The \emph{$\circ$-product}
\[
\alpha_1\circ\alpha_2\circ\cdots\circ\alpha_k
\]
is defined to be the character of $G_n$ obtained by inducing $\psi$ from
$\mathfrak{H}_{s_1,\dots,s_k}$ to $G_n$.  As shown in \cite[\S2]{Green},
the binary operation $\circ$ is commutative, associative, and bilinear.
If $\mathcal A_n$ denotes the space of class functions on $G_n$ and
\[
\mathcal A=\bigoplus_{n\ge1}\mathcal A_n,
\]
then $\mathcal A$ becomes a commutative associative algebra over
$\mathbb{C}$ under the product $\circ$.
\begin{proposition}
Let $\chi_\mu$ denote the irreducible character of $G_n$ corresponding
to $\mu$ in Green's parametrization. With respect to notation above, we have that
\[
(\chi_\mu)^\vee=\chi_{\mu'}.
\]
In particular $\chi_\mu$ is self-dual if and only if
\[
\mu(f)=\mu(f^*)
\qquad\text{for all }f\in\Phi .
\]
\end{proposition}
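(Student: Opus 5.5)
The plan is to use Green's construction of $\chi_\mu$ and check that each building block behaves in the expected way under complex conjugation. Since $\chi^\vee=\overline{\chi}$, we have $(\chi_\mu)^\vee=\overline{\chi_\mu}$, so it suffices to show $\overline{\chi_\mu}=\chi_{\mu'}$.

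Green's construction runs as follows. One fixes an identification of $\Phi$ with Frobenius orbits of characters of $\varprojlim \F_{q^m}^\times$. Through this identification, each $f\in\Phi$ of degree $d$ corresponds to a $\op{G}_k$-orbit of multiplicative characters $\theta$ of $\F_{q^d}^\times$ in general position. Equivalently, via the fixed isomorphism $\bar k^\times\simeq(\Q/\Z)_{p'}$, the roots of $f$ correspond to such characters. One then builds a basic character $J_d(\theta)$ of $G_d$, and from it the characters $\chi_f^{\lambda}$ of $G_{d|\lambda|}$ attached to a partition $\lambda$. These are obtained by applying the Specht-module characters $\chi^\lambda$ of $S_{|\lambda|}$ (which are rational by the earlier proposition on $S_n$) to the $\circ$-powers of $J_d(\theta)$. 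Finally $\chi_\mu=\chi_{f_1}^{\mu(f_1)}\circ\cdots\circ\chi_{f_N}^{\mu(f_N)}$.

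The key steps, in order, are these.

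\begin{enumerate}
\item Check that the roots of $f^*$ are the inverses of the roots of $f$. Under the fixed identification, inversion of roots corresponds to replacing $\theta$ by $\theta^{-1}=\overline{\theta}$. So $f^*$ labels the orbit of $\overline{\theta}$.
\item Show $\overline{J_d(\theta)}=J_d(\overline\theta)$. Green's formula for $J_d(\theta)$ at an element of type $c$ is a sum of $\theta$-values $\theta(\alpha+\alpha^q+\cdots)$ over roots of the semisimple part, multiplied by Green polynomials $Q$ evaluated at $q$. The Green polynomials are integers, so conjugation acts only on the $\theta$-values.
\item Show that complex conjugation commutes with $\circ$. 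This holds because $\circ$ is induction of a product, and $\overline{\Ind\psi}=\Ind\overline\psi$.
\item Conclude from steps 1--3 that $\overline{\chi_f^\lambda}=\chi_{f^*}^\lambda$, since the coefficients coming from $\chi^\lambda$ are rational. Taking $\circ$-products then gives $\overline{\chi_\mu}=\chi_{\mu'}$.
\end{enumerate}

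For the ``in particular'' statement, we use that Green's parametrization is a bijection $\mathfrak X_n\to\op{Irr}(G_n)$. Hence $\chi_\mu=\chi_{\mu'}$ holds exactly when $\mu=\mu'$, that is, when $\mu(f)=\mu(f^*)$ for all $f$.

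The main obstacle is step 1 together with the compatibility of Green's labelling: one must check that the correspondence $f\leftrightarrow\theta$ really sends $f^*$ to $\theta^{-1}$. This depends on the fixed embedding $\bar k^\times\hookrightarrow\mathbb C^\times$ being a group homomorphism. I would first try to verify this by direct comparison with Green's definition of the ``simplex'' labels. Failing that, I would compare the values of $\chi_\mu$ and $\overline{\chi_\mu}$ on regular semisimple elements, where Green's formula is explicit. Since both $\overline{\chi_\mu}$ and $\chi_{\mu'}$ are irreducible and of the parametrized form, agreement of their labels forces equality.
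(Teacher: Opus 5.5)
Your proposal is correct and is essentially the paper's argument. The paper pushes $g\mapsto g^{-1}$ through Green's construction: first $T_{1,d}(k;\alpha^{-1})=T_{1,d}(-k;\alpha)$, then the basic characters $B^\rho(h^\rho;\cdot)$ and $\circ$-products, then the rational expansion of irreducibles in basic characters. This is the same bookkeeping as your complex conjugation, since $\chi(g^{-1})=\overline{\chi(g)}$ and $\overline{\theta}=\theta^{-1}$ corresponds to $k\mapsto -k$ on the labels.

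One harmless correction to how you describe Green's construction. The primary character $\chi_f^\lambda$ is a signed combination, with coefficients $\chi^\lambda_\rho/z_\rho$, of $\circ$-products of basic characters of the groups $G_{d\rho_i}$ attached to $\theta\circ N$ (with $N$ the norm to $\F_{q^d}^\times$), not of $\circ$-powers of $J_d(\theta)$. Also, the value formula involves $\theta(\alpha)+\theta(\alpha^q)+\cdots$, not $\theta(\alpha+\alpha^q+\cdots)$. Since $\overline{\theta\circ N}=\overline{\theta}\circ N$, your steps 2--4 go through unchanged.
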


\begin{proof}
The result is probably well known and follows from the construction of Deligne and Lusztig. Since we have been unable to find a reference, we give a brief sketch of a proof here following the notation and conventions in \cite{Green}. 
\par Let $g\in G_n$ and let
\[
\alpha_1,\dots,\alpha_n\in\overline{\F}_q^\times
\]
be the eigenvalues of $g$. Then the eigenvalues of $g^{-1}$ are
$\alpha_1^{-1},\dots,\alpha_n^{-1}$. From Green's definition
\cite[Definition~3.1]{Green} of the functions
\[
T_{1,d}(k;\alpha)
=
\theta_d^{k}(\alpha)
+
\theta_d^{kq}(\alpha)
+\cdots+
\theta_d^{kq^{d-1}}(\alpha),
\]
one immediately obtains
\[
T_{1,d}(k;\alpha^{-1})=T_{1,d}(-k;\alpha).
\]
Consequently the class functions $J_d(k)$ defined by
\[
J_d(k)(g)
=
\sum T_{1,d}(k;\alpha_{i_1})\cdots T_{1,d}(k;\alpha_{i_d})
\]
satisfy
\[
J_d(k)(g^{-1})=J_d(-k)(g).
\]
Since the basic characters $B^\rho(h^\rho;\cdot)$ are obtained from the
functions $J_d(k)$ by Green's induction product, it follows that
\[
B^\rho(h^\rho;g^{-1})=B^\rho((-h)^\rho;g),
\]
i.e. inversion replaces each parameter $h_{d,i}$ by $-h_{d,i}$.

Theorem~14 in \emph{loc. cit.} expresses the symmetric--function class function
$(\cdots g^{\mu(g)}\cdots)$ as a linear combination of the basic
characters $B^\rho(h^\rho;\cdot)$. Replacing $\mu$ by
$\mu'$ with $\mu'(f)=\mu(f^*)$ corresponds to replacing every root
$\alpha$ of $f$ by $\alpha^{-1}$, hence to replacing the parameters
$h$ by $-h$ in the expansion. Comparing the resulting formulas shows
that
\[
\chi_\mu(g^{-1})=\chi_{\mu'}(g)
\qquad\text{for all }g\in G_n.
\]
Since the character of the dual representation satisfies
$\chi_{V^\vee}(g)=\chi_V(g^{-1})$, this implies
\[
(\chi_\mu)^\vee=\chi_{\mu'}.
\]
The final assertion is immediate from the definition of $\mu'$.
\end{proof}
\begin{proposition}\label{selfdualimpliesorthogonal}
With the above notation, the irreducible character $\chi_\mu$ is orthogonal if and only if it is self-dual.
\end{proposition}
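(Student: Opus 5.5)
The plan is to show that no irreducible character of $G_n=\GL_n(\F_q)$ has Frobenius--Schur indicator $-1$. Then the trichotomy $\varepsilon(\pi)\in\{0,1,-1\}$ recalled above gives the equivalence: an orthogonal character is self-dual by definition, and a self-dual irreducible character with $\varepsilon\neq-1$ must have $\varepsilon=1$. The classical route goes through a real-valued character-theoretic criterion. If a finite group $G$ admits an involutive automorphism $\iota$ with $\chi(\iota(g))=\chi(g^{-1})$ for every irreducible $\chi$, and $\iota$ is \emph{inner-up-to-twist} in the appropriate sense, then one can count square roots. Concretely, I would use the identity
\[
\sum_{\chi\in\op{Irr}(G)}\varepsilon(\chi)\chi(1)=\#\{g\in G: g^2=1\}.
\]
It follows that all indicators are $\geq 0$ if and only if $\sum_{\chi}\chi(1)$, summed over self-dual $\chi$, equals the number of involutions (including $1$). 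So the strategy is to prove
\[
\sum_{\chi\ \text{self-dual}}\chi(1)=\#\{g\in G_n: g^2=1\}.
\]

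A cleaner alternative, which I would actually pursue, is the theorem of Gow (and independently Prasad) using the transpose-inverse automorphism. Let $\tau(g)={}^t g^{-1}$, and let $\widetilde G=G_n\rtimes\langle\tau\rangle$. Every element of $G_n$ is conjugate to its transpose, so $\chi\circ\tau=\chi^\vee$ for all irreducible $\chi$. By the twisted Frobenius--Schur theory (Kawanaka--Matsuyama), the twisted indicator
\[
\varepsilon_\tau(\chi)=\frac{1}{|G|}\sum_{g}\chi(g\,\tau(g))
\]
lies in $\{0,\pm1\}$ and is nonzero precisely when $\chi\circ\tau\cong\chi^\vee$. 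Here that means it is nonzero for every $\chi$. Moreover $\sum_\chi\varepsilon_\tau(\chi)\chi(1)=\#\{g: g\,\tau(g)=1\}=\#\{g:g={}^tg\}$, the number of invertible symmetric matrices. On the other hand, $\sum_\chi\chi(1)$ equals the number of symmetric matrices in $G_n$ by Gow's/Klyachko's count (for odd $q$, via Bannai--Kawanaka--Song). Hence all $\varepsilon_\tau(\chi)=1$. The next step relates $\varepsilon_\tau$ to $\varepsilon$ for self-dual $\chi$. Self-duality gives $\chi\circ\tau\cong\chi$. Then $\chi$ extends to $\widetilde G$, and a standard argument (Gow, \emph{Real representations of the finite orthogonal and symplectic groups of odd characteristic}; Prasad, ``On the self-dual representations of finite groups of Lie type'') shows $\varepsilon(\chi)=\varepsilon_\tau(\chi)\cdot\omega_\chi(-1)$, where $\omega_\chi$ is the central character. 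For $\GL_n$, self-duality forces $\omega_\chi(-1)=\pm1$, and Prasad's refinement shows the sign is always $+1$ when $-1$ lies in the image of a suitable element with $g\tau(g)=-1$. Concretely, one picks $J$ symmetric with $J\cdot{}^tJ^{-1}$ central.

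The main obstacle is the matching of sign: showing that the element witnessing self-duality squares to $+1$ rather than $-1$. I would handle it by citing Prasad's theorem, which states that a self-dual irreducible representation of $\GL_n(\F_q)$ is orthogonal. Alternatively I would verify directly, via Green's parametrization, that $\chi_\mu(-1)/\chi_\mu(1)=1$ for self-dual $\mu$. For self-dual $\mu$ the data pair $f$ with $f^*$, and the self-reciprocal factors $T\pm1$ and even-degree $f=f^*$ contribute central character $\pm1$ computable from degrees. Since the counting argument is standard, in the paper I would present this as a citation to Prasad/Gow with the above sketch.
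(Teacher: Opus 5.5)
Your reduction to ruling out $\varepsilon(\chi)=-1$ is fine, and so is the twisted-indicator step. Since $g$ is conjugate to ${}^tg$, every $\chi$ satisfies $\chi\circ\tau=\chi^\vee$, and Gow's identity $\sum_\chi\chi(1)=\#\{g\in G_n : g={}^tg\}$ then forces $\varepsilon_\tau(\chi)=1$ for all $\chi$.

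\textbf{The step that fails is the bridge $\varepsilon(\chi)=\varepsilon_\tau(\chi)\,\omega_\chi(-1)$.} That identity holds for an \emph{inner} involution $\iota=\mathrm{Ad}(s)$ with $s^2=z$ central: there $g\,\iota(g)=(gs)^2z^{-1}$, so $\varepsilon_\iota(\chi)=\omega_\chi(z)^{-1}\varepsilon(\chi)$. Nothing like it holds for $\tau$, and it is false for $\GL_n(\F_q)$:
\begin{itemize}
\item Take $q\equiv 3\pmod 4$, $n$ odd, and $\chi$ the quadratic character of $\F_q^\times$ composed with $\det$. It is real and one-dimensional, so $\varepsilon(\chi)=\varepsilon_\tau(\chi)=1$. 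But its central character at $-I_n$ equals $-1$.
\item For $n=2$, take $\Ind_B^{\GL_2(\F_q)}(\mathbf{1}\otimes\text{quadratic})$. It is irreducible, self-dual, and induced from a $\pm1$-valued character, hence orthogonal. Yet $-I$ acts by $-1$.
\end{itemize}
The same examples refute your alternative plan of checking $\chi(-I)=\chi(1)$ for every self-dual $\chi$ via Green's parametrization. A nonempty partition at the self-reciprocal factor $T+1$ does produce central sign $-1$.

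What the twisted argument actually gives is $\varepsilon(\chi)=c\,\varepsilon_\tau(\chi)$. Here $A$ is an intertwiner with $A\rho(g)A^{-1}=\rho(\tau g)$, normalized so that $A^2=1$, and $c$ is defined by $B_0(Av,Aw)=c\,B_0(v,w)$ for the $G_n$-invariant form $B_0$. Since $\varepsilon_\tau\equiv 1$, determining this sign $c$ is the entire content of the proposition. Nothing in your sketch controls $c$; it is not a central-character value.

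Your first route has a related problem. Its target identity $\sum_{\chi=\bar\chi}\chi(1)=\#\{g: g^2=1\}$ is equivalent to the proposition and is not supplied by Gow's symmetric-matrix count. Counting over $G_n\rtimes\langle\tau\rangle$ only reproduces that same identity.

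So the only part of your proposal that proves the statement is the fallback citation of Prasad's theorem that self-dual irreducible representations of $\GL_n(\F_q)$ are orthogonal. That is exactly the paper's proof, which is a one-line appeal to Prasad. If you want to offer more than the citation, drop the $\omega_\chi(-1)$ sketch. A correct short route is that every irreducible character of $\GL_n(\F_q)$ has Schur index $1$ over $\Q$ (Ohmori, Zelevinsky). Hence a real-valued irreducible character is realizable over $\mathbb{R}$ and has $\varepsilon=1$.
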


\begin{proof}
By \cite[Theorem 4]{DPrasad}, an irreducible representation of $\op{GL}_n(\F_q)$ is orthogonal if and only if it is self-dual and has Frobenius--Schur indicator $+1$. For irreducible characters of $\op{GL}_n(\F_q)$, self-duality already forces the indicator to be $+1$, and the claim follows.
\end{proof}

\subsection{Stiefel--Whitney classes for representations of $\op{GL}_n(\F_q)$}

\par We recall results from \cite{JoshiSpallone1,JoshiSpallone2, GangulyJoshi2,GangulyJoshiTotalSWCs}, where Stiefel--Whitney classes of real representations of $\op{GL}_n(\F_q)$ are computed in terms of character values on elements of order $2$.

Let $\Omega = \{0,1\}^n$ be the set of binary strings of length $n$. Let $C_m$ be a cyclic group of order $m$ with generator $x$. For $j \in \Z/m\Z$, define the linear character
\[
\chi^j(x) = \zeta_m^{jx},
\]
where $\zeta_m$ is a fixed primitive $m$th root of unity. If $m$ is even, we define the sign character $\op{sgn} := \chi^{m/2}$. For $a =(a_1, \dots, a_n)\in \Omega$, define a representation of $C_m^n$ by
\[
\op{sgn}_a := \boxtimes_{j=1}^n (\op{sgn})^{a_j}.
\]We identify $C_2^n$ with the subgroup of diagonal matrices
\[
\op{diag}(\pm1,\dots,\pm1) \subset \op{GL}_n(\F_q).
\]
For $0\le i \le n$, define
\[
h_i := \op{diag}(\underbrace{-1,\dots,-1}_{i},1,\dots,1).
\]
Let $x$ be an element of $\F_q^\times$ which is not a square and let \[t_x:=\op{diag}(x, 1, \dots, 1)\in \op{GL}_n(\F_q).\] When $q\equiv 3\pmod{4}$ we can take $x:=-1$, in which case $t_x=h_1$. Given a finite dimensional real representation $\pi$ of $\op{GL}_n(\F_q)$ with character $\chi_\pi$, we define
\[
m_\pi = \frac{\dim \pi - \chi_\pi(h_1)}{2}\quad \text{and}\quad m_x:=\frac{\dim \pi - \chi_\pi(t_x)}{2}.
\]
\par We recall a standard relation between the determinant and the character of a real representation evaluated at an involution.

\begin{lemma}\label{lemma det involution}
Let $\pi$ be a finite-dimensional real representation of a finite group $G$, with character $\chi_\pi$. Let $g \in G$ be an element of order dividing $2$. Then
\[
\det(\pi)(g)
=
(-1)^{\frac{\dim \pi - \chi_\pi(g)}{2}}.
\]
\end{lemma}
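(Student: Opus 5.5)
Since $g^2=1$, the operator $\pi(g)$ on the real vector space $V_0$ underlying $\pi$ satisfies $\pi(g)^2=\mathrm{id}$. Its minimal polynomial therefore divides $t^2-1=(t-1)(t+1)$, which has distinct real roots. Hence $\pi(g)$ is diagonalizable over $\mathbb{R}$, with eigenvalues in $\{\pm1\}$. Let $a$ and $b$ be the multiplicities of the eigenvalues $+1$ and $-1$, respectively.

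Complexification does not change the characteristic polynomial, so the character and the determinant can be read off the diagonal form:
\[
\dim\pi = a+b,\qquad \chi_\pi(g)=a-b,\qquad \det(\pi)(g)=(-1)^b.
\]
Solving the first two relations for $b$ gives $b=\frac{\dim\pi-\chi_\pi(g)}{2}$. This is a non-negative integer, so the exponent in the statement makes sense, and substituting into the third relation yields the claimed formula.

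None of these steps is hard. The only point needing care is the diagonalizability of $\pi(g)$, which follows from the minimal polynomial being separable. Alternatively, one can average an inner product over $G$ to make $\pi(g)$ orthogonal and symmetric, hence diagonalizable. The same computation works for the complexification as well, since a complex representation of a finite group is unitarizable.
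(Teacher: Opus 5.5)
Your proposal is correct and follows essentially the same argument as the paper. Both decompose $V$ into the $\pm1$ eigenspaces of $\pi(g)$, read off $\dim\pi=a+b$, $\chi_\pi(g)=a-b$ and $\det(\pi)(g)=(-1)^b$, and then solve for $b$. Your minimal-polynomial justification of diagonalizability over $\mathbb{R}$ is a small detail that the paper leaves implicit.
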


\begin{proof}
Since $g^2=1$, the linear operator $\pi(g)$ satisfies $\pi(g)^2=I$. It follows that $\pi(g)$ is diagonalizable over $\mathbb R$, with eigenvalues contained in $\{\pm 1\}$. Let $V$ be the underlying real representation space of $\pi$, and write
\[
V = V^{+} \oplus V^{-}
\]
for the decomposition into the $+1$ and $-1$ eigenspaces of $\pi(g)$. Set $d_+ = \dim V^{+}$ and $d_- = \dim V^{-}$, then \[\dim \pi = d_+ + d_-.\] On the other hand, the character value is given by the trace:
\[
\chi_\pi(g) = \operatorname{tr}(\pi(g)) = d_+ - d_-.
\]
Solving these two equations for $d_-$, we obtain
\[
d_- = \frac{\dim \pi - \chi_\pi(g)}{2}.
\]
We thus find that
\[
\det(\pi)(g)
=
(-1)^{d_-}
=
(-1)^{\frac{\dim \pi - \chi_\pi(g)}{2}},
\]
as claimed.
\end{proof}
\par Applying Lemma~\ref{lemma det involution} to the element $t_x \in \op{GL}_n(\F_q)$, we obtain
\[
\det(\pi)(t_x)
=
(-1)^{\frac{\dim \pi - \chi_\pi(t_x)}{2}}
=
(-1)^{m_x}.
\]
For a real representation $\pi:G\to \mathrm{O}(V)$, the first Stiefel--Whitney class is the homomorphism
\[
w_1(\pi) = \det  \pi : G \longrightarrow \{\pm 1\}.
\]
In particular, $w_1(\pi)$ is a group homomorphism, and hence factors through the abelianization $G^{\mathrm{ab}}=G/[G,G]$.

\par In the case $G=G_n=\GL_n(\F_q)$, the determinant map
\[
\det : G_n \longrightarrow \F_q^\times
\]
induces an isomorphism
\[
G_n^{\mathrm{ab}} \;\cong\; \F_q^\times.
\]
Composing with the natural quotient $\F_q^\times \to \F_q^\times/(\F_q^\times)^2 \cong \{\pm1\}$, we see that every homomorphism $G_n \to \{\pm1\}$ factors through the determinant modulo squares. Consequently, such a homomorphism is completely determined by its value on any element whose determinant represents the nontrivial class in $\F_q^\times/(\F_q^\times)^2$. The element $t_x$
has determinant $x$, which is a nonsquare in $\F_q^\times$ and represents the unique nontrivial element of $\F_q^\times/(\F_q^\times)^2$. It follows that a homomorphism $G_n \to \{\pm1\}$ is trivial if and only if it takes the value $1$ at $t_x$. In particular,
\begin{equation}\label{w_1 basic fact}w_1(\pi)=0
\quad\Longleftrightarrow\quad m_x\, \text{is even.}
\end{equation}

\par Let $C_2^n \subset \op{GL}_n(\F_q)$ be the subgroup of the diagonal subgroup consisting of entries $\pm 1$. For $1 \le i \le n$, define
\[
v_i = w_1(\op{sgn}_{e_i}) \in H^1(C_2^n;\Z/2\Z),
\]
where $e_i$ is the $i$th standard basis vector. Let
\[
D = \left\{ \diag(d_1,\dots,d_n) : d_i \in \F_q^\times \right\} \subset \GL_n(\F_q)
\]
denote the diagonal torus. For each $1 \le i \le n$, let
\[
\chi_i : D \to \F_q^\times, \qquad \chi_i(\diag(d_1,\dots,d_n)) = d_i
\]
be the projection onto the $i$th coordinate. Composing $\chi_i$ with a fixed embedding $\F_q^\times \hookrightarrow \mathbb{C}^\times$, we obtain a one-dimensional complex representation of $D$, which we continue to denote by $\chi_i$.

We write $\chi_{i,\mathbb{R}}$ for the underlying real representation. We define
\[
t_i := w_2(\chi_{i,\mathbb{R}}) \in H^2(D;\Z/2\Z).
\]
Let $C_2^n \subset \GL_n(\F_q)$ denote the subgroup of diagonal matrices with entries $\pm 1$, and for $1 \le i \le n$ let
\[
v_i = w_1(\mathrm{sgn}_{e_i}) \in H^1(C_2^n;\Z/2\Z)
\]
be as above. As is well known (cf. \cite[Theorem 5]{GangulyJoshiTotalSWCs}) the diagonal subgroup $D$ detects the $\Z/2\Z$-cohomology of $\op{GL}_n(\F_q)$, i.e., the restriction map $w\mapsto w_{|D}$ from 
\[H^*(\op{GL}_n(\F_q), \Z/2\Z)\rightarrow H^*(D, \Z/2\Z)\]is injective. If $q\equiv 3\pmod{4}$ then $C_2^n$ is the Sylow $2$-subgroup of $D$ and thus in this case, the restriction 
\[H^*(\op{GL}_n(\F_q), \Z/2\Z)\rightarrow H^*(C_2^n, \Z/2\Z)\]is injective.

\begin{theorem}\label{theorem about w_2 and w_4}
Let $\pi$ be a finite-dimensional real representation of $\GL_n(\F_q)$.

\begin{enumerate}
\item If $q \equiv 1 \pmod{4}$, then upon restriction to the diagonal torus $D$ one has
\[
w_2(\pi)\big|_D = \frac{m_\pi}{2} \sum_{i=1}^n t_i \in H^2(D;\Z/2\Z),
\]
and
\[
w_4(\pi)\big|_D
=
\binom{m_\pi/2}{2}\sum_{i=1}^n t_i^2
+
\frac{\dim \pi - \chi_\pi(h_2)}{8}
\sum_{1 \le i < j \le n} t_i t_j
\in H^4(D;\Z/2\Z).
\]
Further if $n\geq 5$ and $\pi$ is a principal series representation, then $w_4(\pi)=0$.
\item If $q \equiv 3 \pmod{4}$, then upon restriction to $C_2^n$ one has
\[
w_2(\pi)\big|_{C_2^n} = \binom{m_\pi}{2} \sum_{i=1}^n v_i^2 \in H^2(C_2^n;\Z/2\Z).
\]
\end{enumerate}
\end{theorem}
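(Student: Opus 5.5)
Since the diagonal torus $D$ detects the mod-$2$ cohomology of $\GL_n(\F_q)$, and $C_2^n$ is Sylow in $D$ when $q\equiv 3\pmod 4$, it is enough to compute $w(\pi|_D)$, or $w(\pi|_{C_2^n})$ in case (2). The plan is to restrict $\pi$, decompose it into real irreducibles of the abelian group, apply the Whitney sum formula, and then use $\mathfrak S_n$-invariance to reduce the coefficients to character values at the involutions $h_i$.

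\textbf{Decomposing the restriction.} The real irreducible representations of $D\cong(\F_q^\times)^n$ are of three kinds: trivial, real sign characters factoring through $D/D^2$, and realifications of complex characters $\chi_1^{a_1}\cdots\chi_n^{a_n}$ that are not real-valued. Because $\pi$ is real, $\pi|_D$ is a sum of these pieces. Each piece has a total Stiefel--Whitney class computable from $H^*(D;\Z/2\Z)$. In case (1), $q\equiv 1\pmod 4$, so $|\F_q^\times|\equiv 0 \pmod 4$ and $H^*(C_{q-1};\Z/2)=\Lambda[u]\otimes\F_2[t]$. The realification of $\chi^a$ then has $w_1=a\,u$ and $w_2=a\,t$. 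Since $\pi$ comes from $\GL_n$, we have $w_1(\pi)|_D=\det\pi|_D$, and only the even part is relevant. After accounting for the $u$-classes, the classes that survive in $w_2$ and $w_4$ are polynomials in the $t_i$ only.

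\textbf{Extracting the coefficients.} The restriction $\pi|_D$ is invariant under the Weyl group $\mathfrak S_n$, so $w_2(\pi)|_D=c\sum_i t_i$ plus possible $u$-terms. These $u$-terms must vanish because $\pi$ extends to $\GL_n$; this should follow from checking the restriction to the $2$-part of the center. The coefficient $c$ is the number of pieces with odd exponent $a_1$, counted modulo $2$. To find it, use the character value at $h_1=\diag(-1,1,\dots,1)$, where $\chi_1^{a_1}(h_1)=(-1)^{a_1}$. This gives that the total multiplicity of constituents with odd $a_1$ equals $\dim\pi-\chi_\pi(h_1)=2m_\pi$. Each realified two-dimensional piece contributes $2$ to this count, so the number of such pieces is $m_\pi$. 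One then has to show that $m_\pi$ is even in this case and that, after pairing $\chi$ with $\bar\chi$, the coefficient is $m_\pi/2$. Pairing constituents into real pieces requires care: characters whose $a_1$ is odd but which are real sign-type occur only through $D/D^2$ and contribute to $u$-terms.

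For $w_4$, expand the product $\prod(1+a_1t_1+\dots+a_nt_n)$ in degree $4$. The coefficient of $t_i^2$ is $\binom{N}{2}$, where $N$ is the number of pieces with odd $a_i$; this gives $\binom{m_\pi/2}{2}$. The coefficient of $t_it_j$ is a pair count computed by inclusion–exclusion from the values at $h_1$ and $h_2$. This produces $(\dim\pi-\chi_\pi(h_2))/8$, after using $2\chi_\pi(h_1)$ and $\mathfrak S_n$-symmetry to cancel the cross terms.

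\textbf{Case (2) and the principal-series claim.} In case (2), restrict to $C_2^n$, where $\pi$ is a sum of sign characters $\op{sgn}_a$. Then $w=\prod(1+\sum a_iv_i)$, and degree $2$ gives $\sum_i\binom{N_i}{2}v_i^2+\sum_{i<j}(\cdot)v_iv_j$. Here $N_i=m_\pi$ by the $h_1$ computation. The cross terms need to cancel modulo $2$; I would try to verify this via symmetry together with $w_1$ considerations, but I am less sure of the argument. For the principal-series claim with $n\ge5$, I would use that $\chi_\pi(h_1)$ and $\chi_\pi(h_2)$ for induced-from-Borel representations are divisible by large powers of $2$, via the Mackey formula on $\GL_n/B$ fixed points. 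This should force $m_\pi/2$ even and $(\dim\pi-\chi_\pi(h_2))/8$ even.

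\textbf{Main obstacle.} The hardest step is the bookkeeping of the $2$-adic divisibility in the $w_4$ coefficient, together with justifying the vanishing of the mixed $u$- and $v_iv_j$-terms. For these I would ultimately rely on the computations in \cite{GangulyJoshiTotalSWCs} and \cite{JoshiSpallone2}.
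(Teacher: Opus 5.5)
The paper gives no argument for this theorem; it only quotes \cite{GangulyJoshiTotalSWCs}. Your mechanism is the right one: restrict, split into real constituents, apply the Whitney formula, and read multiplicities off $\chi_\pi(h_1)$ and $\chi_\pi(h_2)$. The coefficients you announce for $t_i$, $t_i^2$, $t_it_j$ and $v_i^2$ are what this produces. Some of the bookkeeping along the way is wrong, though.

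\emph{The $u$-classes.} The realification of a non-real character $\chi_1^{a_1}\cdots\chi_n^{a_n}$ of $D$ acts by rotations. Its total class is therefore $1+\sum_i a_it_i$ with $w_1=0$, not $a\,u$. The $u$-classes come only from the real one-dimensional constituents, i.e.\ characters of $D/D^2$. When $q\equiv 1\pmod 4$ their exponents lie in $\{0,(q-1)/2\}$ and are even, so none of them has $a_1$ odd and none affects the $t$-coefficients.

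\emph{The count at $h_1$.} Every constituent with $a_1$ odd is therefore a realified plane on which $h_1$ acts by $-1$. Each contributes $4$ to $\dim\pi-\chi_\pi(h_1)=2m_\pi$. So there are exactly $m_\pi/2$ of them, not $m_\pi$, and $m_\pi$ is automatically even.

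\emph{The principal-series clause.} The target is $m_\pi/2\equiv 0,1\pmod 4$ and $16\mid\dim\pi-\chi_\pi(h_2)$, not ``$m_\pi/2$ even''. This is a statement about the differences $\dim\pi-\chi_\pi(h_i)$, not about $\chi_\pi(h_i)$ alone. For instance, for $\Ind_B^{G}(\chi_1\otimes\cdots\otimes\chi_n)$ the count of fixed flags gives $\dim\pi-\chi_\pi(h_1)=[n-1]_q!\,\bigl([n]_q-\sum_k\chi_k(-1)\bigr)$, where $[m]_q=(q^m-1)/(q-1)$ and $[m]_q!=[1]_q\cdots[m]_q$. The analogous computation at $h_2$ still has to be done.

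The genuine gap is the vanishing of the unwanted classes, which you hope to obtain from Weyl symmetry, the center, or $w_1$. None of these can do it. The classes $\sum_{i<j}u_iu_j$ and $\sum_{i<j}v_iv_j$ are $\mathfrak S_n$-invariant, and $u_iu_j$ restricts to $u^2=0$ on the scalars. Moreover, $w_1|_{C_2^n}=m_\pi\sum_i v_i$ does not determine the cross terms.

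In case (2), carrying out your inclusion--exclusion gives
\[
w_2(\pi)\big|_{C_2^n}=\binom{m_\pi}{2}\sum_i v_i^2+\frac{\dim\pi-\chi_\pi(h_2)}{4}\sum_{i<j}v_iv_j .
\]
So the statement is equivalent to the global congruence $\dim\pi\equiv\chi_\pi(h_2)\pmod 8$, and this needs an argument. One way: $\op{SL}_2(\F_q)$ contains a copy of $Q_8$ whose central involution $-I_2$ becomes $h_2$. The only real irreducible representation of $Q_8$ on which $-1$ acts nontrivially is the $4$-dimensional $\mathbb H$, where it acts by $-1$. Hence $\dim\pi-\chi_\pi(h_2)=8s$, where $s$ is the multiplicity of $\mathbb H$ in $\pi|_{Q_8}$.

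In case (1), the Whitney product factors as $S\cdot T$. Here $S\in\Lambda[u_1,\dots,u_n]$ is the total class of the real one-dimensional constituents and $T\in\F_2[t_1,\dots,t_n]$. Killing $S_2$, and then $S_4$, imposes a real constraint on those multiplicities. That constraint is exactly what Quillen's description of the image of $H^*(\GL_n(\F_q);\F_2)\to H^*(D;\F_2)$ supplies. For example, $H^2(\GL_n(\F_q);\F_2)$ is one-dimensional, spanned by the class restricting to $\sum_i t_i$.

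These are precisely the steps you defer to \cite{GangulyJoshiTotalSWCs}, so in the end your proposal rests on the same citation as the paper. As a self-contained argument, however, the justifications you sketch for them would not go through.
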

\begin{proof}
    The result follows from \cite[Theorem 2, the formulas on p.~4 and Theorem 4]{GangulyJoshiTotalSWCs}.
\end{proof}

\par The formulas obtained above show that $w_2(\pi)$, and (when $q \equiv 1 \pmod{4}$) $w_4(\pi)$ are determined by congruence conditions on the integers $\chi_\pi(h_i)$ for $i=0,1,2$. 

\section{Asymptotic divisibility in the large $n$ limit}\label{s 3}
\par In this section we investigate the $2$-adic divisibility properties of character degrees and character values for irreducible representations of $G_n=\GL_n(\F_q)$ in the limit as $n\to\infty$. The guiding principle is that $2$-adic valuations of character values become increasingly large on average as $n\rightarrow \infty$. More precisely, we show that for any fixed element $g \in G_{n_0}$, the values $\chi(g)$ are divisible by arbitrarily large powers of $2$ for almost all irreducible characters $\chi$ of $G_n$ as $n \to \infty$. These asymptotic divisibility results will serve as the key input in the next section, where we relate them to the vanishing of the Stiefel--Whitney classes $w_1$, $w_2$ and $w_4$.
\par Fix an odd prime power $q$. For each $n$, set $G_n := \op{GL}_n(\F_q)$. We study asymptotic properties of irreducible characters of $G_n$ as $n \to \infty$. Given $m < n$, we embed $G_m$ into $G_n$ via block diagonal matrices:
\[
g \mapsto 
\begin{pmatrix}
g & 0 \\
0 & I_{n-m}
\end{pmatrix}.
\]
If $\chi$ is a character of $G_n$ and $g \in G_m$, we define
\[
\chi(g) := \chi\left(
\begin{pmatrix}
g & 0 \\
0 & I_{n-m}
\end{pmatrix}
\right).
\]
Let $\op{Irr}(G_n)$ (resp. $\op{OIrr}(G_n)$) denote the set of irreducible characters (resp. irreducible orthogonal characters) of $G_n$. The main goal in this section is to prove the following result.

\begin{theorem}
Fix $n_0 \ge 1$, $g \in G_{n_0}$, and $d = 2^m$. Then
\[
\lim_{n\to\infty}
\frac{
\#\{ \chi \in \op{OIrr}(G_n) \mid \quad d \, \text{divides}\,\chi(g)\}
}{
|\op{OIrr}(G_n)|
}
= 1.
\]
\end{theorem}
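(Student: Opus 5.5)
Writing $\Lambda$ for the set of partitions, I will model the argument on the Ganguly--Prasad--Spallone argument for $S_n$, using the combinatorics of Green's parametrization as in Shah--Spallone \cite{shah2024divisibility}. The plan has three steps.

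\textbf{Step 1: the self-dual parameters.} By Proposition~\ref{selfdualimpliesorthogonal} and the duality proposition, $\op{OIrr}(G_n)$ is in bijection with the functions $\mu\in\mathfrak X_n$ satisfying $\mu(f)=\mu(f^*)$ for all $f$. I split $\Phi$ into three kinds of polynomials:
\begin{itemize}
\item the self-reciprocal ones ($f=f^*$), namely $t\pm1$ and certain $f$ of even degree;
\item the pairs $\{f,f^*\}$ with $f\neq f^*$.
\end{itemize}
A self-dual $\mu$ is then the same as a free choice of one partition per self-reciprocal $f$ and one partition per pair. The weights are $\deg f$ and $2\deg f$ respectively, and the total weight is $n$.

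This gives a product generating function for $|\op{OIrr}(G_n)|$. From it I extract the asymptotic profile of the count, e.g.\ growth like $q^{n/2}$ times lower-order factors, coming from a Meinardus/Wright-type analysis. The key statement to prove is that for almost all such $\mu$, there is a \emph{single} self-reciprocal $f_0$ (or a single pair) with $|\mu(f_0)|\to\infty$. More precisely, for each fixed $N$, the proportion of $\mu$ with $\max_f|\mu(f)|\le N$ should tend to $0$.

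If that turns out to be false, because the mass spreads over many polynomials, I would instead use the weaker fact that for almost all $\mu$ some total "size in a fixed degree class" tends to infinity. The valuation bound in Step 2 only needs the product of partition contributions to be large.

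\textbf{Step 2: a divisibility formula.} Next I need a formula for $\chi_\mu(g)$ when $g\in G_{n_0}$ is embedded in $G_n$. Via Green's character formula (or the Shah--Spallone reduction), $\chi_\mu(g)$ equals the unipotent-type degree factor of $\chi_\mu$ divided by something depending only on $n_0$ and $g$, times an algebraic integer, in analogy with Lassalle's identity \eqref{lassalle}.

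Concretely, the degree is
\[
\chi_\mu(1)=\pm\,\psi_n(q)\prod_f \frac{q_f^{n(\mu(f)')}}{\prod_{h}(q_f^{h}-1)},\qquad q_f=q^{\deg f},
\]
and I expect $\chi_\mu(g)/\chi_\mu(1)\cdot|C_{G_n}(g)|$-type bounds to give
\[
v_2(\chi_\mu(g))\ \ge\ v_2(\chi_\mu(1))-c(n_0,g)-C\log n .
\]
Here the subtracted terms come from a $q$-analogue of the falling factorial $(n)_k$, namely $\prod_{i\le n_0}(q^{n-i+1}-1)$. Its $2$-adic valuation is $O(n_0\log n)$ by the lifting-the-exponent lemma, since $v_2(q^j-1)\le v_2(q^2-1)+v_2(j)$.

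\textbf{Step 3: growth of the degree valuation.} It remains to show that $v_2(\chi_\mu(1))$ exceeds $C\log n+c+m$ for almost all self-dual $\mu$. The $2$-part of $\psi_n(q)/\prod(q_f^h-1)$ is a $q$-hook-length quotient. Using LTE again, its $2$-adic valuation is $\sum_f\bigl(v_2(\mathfrak f_{\mu(f)})$-type term$\bigr)$ plus the $2$-adic valuation of a $q$-multinomial in the $|\mu(f)|\deg f$. I then apply Theorem~\ref{GPS thm 1} with $\ell=2$ to the large partition $\mu(f_0)$ from Step 1. Since it is uniform over partitions of a given size, conditioning on the remaining data yields $v_2(\mathfrak f_{\mu(f_0)})\ge r+\log_2|\mu(f_0)|$ with conditional probability tending to $1$. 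Choosing $r$ large then finishes the proof.

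\textbf{Expected obstacle.} The main difficulty is Step 2 together with the interface between Steps 1 and 3. One must verify that the LTE conversion turns the $q$-hook product into $v_2(\mathfrak f_\lambda)$ plus $O(\log)$ error terms with the correct sign. One must also check that the $\log|\mu(f_0)|$ gain beats the $C\log n$ loss, which requires $|\mu(f_0)|$ to be comparable to a power of $n$ for almost all self-dual $\mu$. I would try first to establish this through the generating-function estimate in Step 1.
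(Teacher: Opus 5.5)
There is a genuine gap in Step 1, and Step 3 rests on it entirely. Your claim that almost every self-dual $\boldsymbol{\mu}$ has a single $|\boldsymbol{\mu}(f_0)|\to\infty$ is false, and so is your fallback about a fixed degree class.

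\textbf{Why Step 1 fails.} The generating function of $|\mathfrak{Y}_m|$ is $\prod_{i\ge1}(1+t^i)^2/(1-qt^{2i})$, so $|\mathfrak{Y}_m|\asymp_q q^{m/2}$. Prescribing $|\boldsymbol{\mu}(f)|=j$ on an orbit $\{f,f^*\}$ of weight $w$ leaves at most $p(j)\,|\mathfrak{Y}_{n-jw}|$ choices. There are at most $q^{w/2}$ orbits of weight $w\ge 2$ and two of weight $1$. A union bound then shows that the proportion of $\boldsymbol{\mu}\in\mathfrak{Y}_n$ with $\max_f|\boldsymbol{\mu}(f)|\ge N$ is $\ll_q\sum_{j\ge N}p(j)q^{-j/2}$, uniformly in $n$. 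So for large fixed $N$ the proportion with $\max_f|\boldsymbol{\mu}(f)|<N$ is close to $1$; it does not tend to $0$. The same count, applied to the finitely many orbits of a fixed weight, disposes of the fallback. It also shows that almost every $\boldsymbol{\mu}$ has some $f$ of degree $>k$ in its support, for any fixed $k$. For these typical $\boldsymbol{\mu}$, Theorem~\ref{GPS thm 1} has no large partition to act on. For example, take a self-dual cuspidal parameter: $\boldsymbol{\mu}(f)=(1)$ with $f=f^*$ irreducible of even degree $n$. Then $b_{\boldsymbol{\mu}}=1$ and $d_{\boldsymbol{\mu}}=\psi_{n-1}(q)$. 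All of the $2$-divisibility sits in the multinomial part $a_{\boldsymbol{\mu}}$, which you write down in Step 3 but never use.

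\textbf{The missing idea.} The paper uses a fibrewise dichotomy that needs no enumeration of $\mathfrak{Y}_n$. Fix $k$ and condition on the type $F=(|\boldsymbol{\mu}(f)|)_f$. Within a fibre, the partitions $\boldsymbol{\mu}(f)\vdash F(f)$ are free, one per orbit. The input is the Shah--Spallone bound $v_2(d_{\boldsymbol{\mu}})\ge v_2\bigl(n!/\prod_f F(f)!\bigr)+\sum_f v_2(\mathfrak{f}_{\boldsymbol{\mu}(f)})$.
\begin{itemize}
\item If the support of $F$ contains some $f$ with $d(f)>k$ (the typical case), then $\sum_f F(f)\le n-k$. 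Hence $(n)_k$ divides the multinomial, and every $\boldsymbol{\mu}$ in the fibre satisfies $v_2(d_{\boldsymbol{\mu}})\ge v_2((n)_k)$ with no probabilistic input.
\item Otherwise the support has at most about $q^{k+1}$ elements, and pigeonhole gives $F(f_0)\ge n/(kq^{k+1})$. Theorem~\ref{GPS thm 1} applied to the free coordinate $\boldsymbol{\mu}(f_0)$ then bounds the bad proportion of that fibre uniformly.
\end{itemize}
Your conditioning argument belongs only in this second, atypical case.

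\textbf{The loss estimate.} Your Step 2 is essentially the paper's: use Lemma~\ref{valuation lemma} with $Z_{G_n}(g)\supseteq G_{n-n_0}$, and divide by $[G_n:G_{n-n_0}]$ rather than $|C_{G_n}(g)|$. However, a loss of $C\log n$ with $C\approx n_0$ is too crude, because the gain is only $\log_2|\boldsymbol{\mu}(f_0)|$. You need the sharp bounds $v_2\bigl(\prod_{i=0}^{n_0-1}(q^{n-i}-1)\bigr)\le n_0v_2(q^2-1)+v_2((n)_{n_0})$ and $v_2((n)_k)\le k+\log_2 n$. You also need $|\boldsymbol{\mu}(f_0)|\gg n$ linearly; ``comparable to a power of $n$'' is not enough. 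The paper arranges this by comparing $v_2(d_{\boldsymbol{\mu}})$ with $v_2((n)_{n_0+2r_0})$, where $r_0=r+n_0v_2(q^2-1)$.
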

\par Let $\Lambda$ denote the set of partitions, and $\Lambda_n$ those of size $n$. Write a partition $\lambda=\lambda_1 \geq \cdots \geq \lambda_k > 0$ as a weakly decreasing sequence of positive integers. We write $|\lambda|=\sum_i \lambda_i$ for the size of $\lambda$, and $\ell(\lambda)=k$ for its length. Let $p(n)=|\Lambda_n|$ denote the number of partitions of $n$, and write $\mathcal{H}(\lambda)$ for the set of hooks of $\lambda$. For a partition $\lambda$, define
\[
\alpha(\lambda)=\sum_i (i-1)\lambda_i.
\]

Let $\Phi$ denote the set of monic irreducible polynomials in $\F_q[x]$ different from $x$. Recall from the previous section that $\mathfrak{X}_n$ is the set of functions
\[
\boldsymbol{\mu} : \Phi \to \Lambda
\quad\text{such that}\quad
\sum_{f \in \Phi} |\boldsymbol{\mu}(f)| d(f) = n.
\]
Each $\boldsymbol{\mu}\in \mathfrak{X}_n$ corresponds to an irreducible complex character $\chi_{\boldsymbol{\mu}}$ of $G_n$. The degree $d_{\boldsymbol{\mu}}$ of $\chi_{\boldsymbol{\mu}}$ is given by
\[
d_{\boldsymbol{\mu}}
=
\psi_n(q)\prod_{f \in \Phi} H(\boldsymbol{\mu}(f), q^{d(f)}),
\]
where
\[
\psi_n(q) := \prod_{i=1}^n (q^i - 1),
\quad\text{and}\quad 
H(\lambda,x): = x^{\alpha(\lambda)} \prod_{h \in \mathcal{H}(\lambda)} (x^{|h|}-1)^{-1}.
\]
\begin{definition}
    Let $\mathfrak{Y}_n$ be the subset of $\mathfrak{X}_n$ consisting of all partition valued functions $\boldsymbol{\mu}$ for which 
    \[\boldsymbol{\mu}(f)=\boldsymbol{\mu} (f^*)\quad \text{for all}\quad f\in \Phi.\]
\end{definition}
\noindent It follows from Proposition \ref{selfdualimpliesorthogonal} that there is a bijection between $\mathfrak{Y}_n$ are the irreducible orthogonal representations of $G_n$.
\par For each partition $\lambda \vdash n$, define $\boldsymbol{\mu}_\lambda \in \mathfrak{X}_n$ by
\[
\boldsymbol{\mu}_\lambda(x-1)=\lambda,
\qquad
\boldsymbol{\mu}_\lambda(f)=\emptyset \quad \text{for } f(x)\neq x-1.
\]
We write $\chi_\lambda := \chi_{\boldsymbol{\mu}_\lambda}$ and call such characters \emph{unipotent}. These are precisely the irreducible constituents of the permutation representation of $G_n$ on $G_n/B_n$, where $B_n$ denotes the subgroup of upper triangular matrices.

The degree $d_\lambda = \deg(\chi_\lambda)$ is given explicitly by
\begin{equation}\label{eq:unipotent-degree}
d_\lambda(q)
=
q^{\alpha(\lambda)}
\frac{\prod_{i=1}^n (q^i - 1)}{\prod_{h \in \mathcal{H}(\lambda)} (q^{|h|}-1)}.
\end{equation}
The dimension of $\chi_{\boldsymbol{\mu}}$ factorizes as
\[
d_{\boldsymbol{\mu}} = a_{\boldsymbol{\mu}} b_{\boldsymbol{\mu}},
\]
where
\[
a_{\boldsymbol{\mu}} =
\frac{\prod_{i=1}^n (q^i-1)}{\prod_{f \in \Phi} \prod_{i=1}^{|\boldsymbol{\mu}(f)|} (q^{d(f)i}-1)},
\quad\text{and}\quad
b_{\boldsymbol{\mu}} = \prod_{f \in \Phi} d_{\boldsymbol{\mu}(f)}(q^{d(f)}),
\]
(cf. \cite[2.3.3]{shah2024divisibility}).
\begin{lemma}\label{valuation lemma}
Let $G$ be a finite group, $g \in G$, $\chi$ an irreducible character of $G$ and let $Z_G(g)$ be the centralizer of $G$. Then
\[
\frac{\chi(g)}{\deg \chi}[G:Z_G(g)]
\]
is an algebraic integer.
\end{lemma}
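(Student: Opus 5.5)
The statement to prove is Lemma \ref{valuation lemma}, the standard integrality fact about central characters. I will deduce it from the integrality of the center of the group algebra, via the central character attached to $\chi$. (Note that $Z_G(g)$ is meant to be the centralizer of $g$, not of $G$.)

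Let $C\subset G$ be the conjugacy class of $g$, so that $|C|=[G:Z_G(g)]$. Put $\widehat{C}=\sum_{x\in C}x$, which lies in the center $Z(\mathbb{C}[G])$. Let $\rho$ be an irreducible representation affording $\chi$. By Schur's lemma $\rho(\widehat{C})$ acts as a scalar $\omega_\chi(\widehat{C})$. Taking traces gives
\[
\deg\chi\cdot\omega_\chi(\widehat{C})=\sum_{x\in C}\chi(x)=|C|\,\chi(g),
\]
so $\omega_\chi(\widehat{C})=\frac{\chi(g)}{\deg\chi}[G:Z_G(g)]$. It therefore suffices to show that $\omega_\chi(\widehat{C})$ is an algebraic integer.

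Let $C_1,\dots,C_r$ be the conjugacy classes of $G$. The $\Z$-span $M=\bigoplus_i \Z\,\widehat{C_i}$ is a subring of $Z(\mathbb{C}[G])$. Indeed, a product $\widehat{C_i}\widehat{C_j}$ is central and has nonnegative integer coefficients that are constant on classes, hence equals $\sum_k a_{ijk}\widehat{C_k}$ with $a_{ijk}\in\Z_{\ge0}$. Thus $M$ is a finitely generated $\Z$-module closed under multiplication and containing $1=\widehat{\{1\}}$.

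The map $\omega_\chi\colon Z(\mathbb{C}[G])\to\mathbb{C}$ is a ring homomorphism. Hence $\omega_\chi(M)$ is a finitely generated $\Z$-submodule of $\mathbb{C}$, closed under multiplication and containing $1$. Multiplication by $\omega_\chi(\widehat{C})$ preserves $\omega_\chi(M)$, which is a faithful finitely generated $\Z[\omega_\chi(\widehat{C})]$-module. By the determinant trick (Cayley--Hamilton), $\omega_\chi(\widehat{C})$ satisfies a monic integer polynomial. Concretely, $\omega_\chi(\widehat{C_i})\,\omega_\chi(\widehat{C_j})=\sum_k a_{ijk}\,\omega_\chi(\widehat{C_k})$ exhibits $\omega_\chi(\widehat{C_i})$ as an eigenvalue of the integer matrix $(a_{ijk})_{j,k}$ with eigenvector $(\omega_\chi(\widehat{C_k}))_k$. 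This eigenvector is nonzero because its entry at the identity class is $1$.

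There is no real obstacle here. The only point needing care is checking that the structure constants $a_{ijk}$ are integers, which follows by counting pairs $(x,y)\in C_i\times C_j$ with $xy=z$ for a fixed $z\in C_k$. This count is independent of the choice of $z$ in its class because conjugation by any element of $G$ permutes such pairs.
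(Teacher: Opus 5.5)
Your proof is correct. The paper gives no argument of its own and simply cites Serre's \emph{Linear Representations of Finite Groups} (Exercise 6.9); your central-character argument is the standard proof of that fact. In that argument the class sum acts by the scalar $\frac{|C|\,\chi(g)}{\chi(1)}$, which is integral because it is an eigenvalue of the integer structure-constant matrix. You were also right to read $Z_G(g)$ as the centralizer of $g$ rather than of $G$.
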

\begin{proof}
    The above result is \cite[Exercise 6.9]{Serre77}.
\end{proof}
\noindent When $G=G_n$ and $g\in G_{n_0}$, we find that $Z_G(g)$ contains $G_{n-n_0}$. This implies that
\[
\frac{\chi_{\boldsymbol{\mu}}(g)}{d_{\boldsymbol{\mu}}}[G_n:G_{n-n_0}]\in \bar{\Z},
\]
from which one has the following valuation criterion.

\begin{proposition}
Let $d$ be coprime to $q$. If for every prime $\ell \mid d$,
\[
v_\ell(d_{\boldsymbol{\mu}}) - v_\ell\left(\prod_{i=0}^{n_0-1} (q^{n-i}-1)\right)
\ge v_\ell(d),
\]
then $d \mid \chi_{\boldsymbol{\mu}}(g)$, i.e., $\chi_{\boldsymbol{\mu}}(g)/d\in \bar{\Z}$.
\end{proposition}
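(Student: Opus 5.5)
The plan is to combine the algebraic integrality statement recorded just before the proposition with a direct valuation count. Set $G=G_n$ and $H=G_{n-n_0}\subseteq Z_{G_n}(g)$. By Lemma~\ref{valuation lemma}, the number
\[
\beta:=\frac{\chi_{\boldsymbol{\mu}}(g)}{d_{\boldsymbol{\mu}}}[G_n:Z_{G_n}(g)]
\]
is an algebraic integer. Since $[G_n:Z_{G_n}(g)]$ divides $[G_n:H]$, the quantity $\frac{\chi_{\boldsymbol{\mu}}(g)}{d_{\boldsymbol{\mu}}}[G_n:H]=\beta\,[Z_{G_n}(g):H]$ is also an algebraic integer. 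Rearranging gives
\[
\chi_{\boldsymbol{\mu}}(g)=\beta'\cdot\frac{d_{\boldsymbol{\mu}}}{[G_n:H]},\qquad \beta'\in\bar{\Z}.
\]

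Next I would compute the index exactly:
\[
|G_m|=q^{\binom m2}\prod_{i=1}^m(q^i-1),
\]
so
\[
[G_n:G_{n-n_0}]=q^{N}\prod_{i=0}^{n_0-1}(q^{n-i}-1)
\]
for some $N\ge 0$. Write $P:=\prod_{i=0}^{n_0-1}(q^{n-i}-1)$. Since $d$ is coprime to $q$, the power $q^N$ is invisible at every prime $\ell\mid d$.

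The key step is to work $\ell$-adically one prime at a time. Fix $\ell\mid d$ and a prime $\mathfrak l$ of $\bar{\Z}$ (or of a sufficiently large number field) above $\ell$. Then
\[
v_{\mathfrak l}\bigl(\chi_{\boldsymbol{\mu}}(g)\bigr)\ \ge\ e\bigl(v_\ell(d_{\boldsymbol{\mu}})-v_\ell(P)\bigr)\ \ge\ e\,v_\ell(d),
\]
where $e$ is the ramification index and we have used the hypothesis. Since this holds for every prime above every $\ell\mid d$, and the rational integer $d$ has no prime factors besides these $\ell$, it follows that $\chi_{\boldsymbol{\mu}}(g)/d$ has nonnegative valuation at all primes. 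It is therefore integral, so $\chi_{\boldsymbol{\mu}}(g)/d\in\bar{\Z}$.

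I do not expect a serious obstacle. The only care needed is in the passage from ``the valuation is nonnegative at every prime above $\ell\mid d$'' to integrality. The point is that the other primes of $\bar{\Z}$ do not divide $d$, and $\chi_{\boldsymbol{\mu}}(g)$ is already an algebraic integer, so $\chi_{\boldsymbol{\mu}}(g)/d$ is automatically integral at those primes. Equivalently, one can use the ideal factorization $(d)=\prod_{\mathfrak l}\mathfrak l^{e\,v_\ell(d)}$ inside the ring of integers of $\Q(\zeta_{|G|})$, which contains all the character values.
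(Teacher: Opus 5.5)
Your proposal is correct and follows the route the paper sets up: combine Lemma~\ref{valuation lemma} with $G_{n-n_0}\subseteq Z_{G_n}(g)$ to get $\chi_{\boldsymbol{\mu}}(g)[G_n:G_{n-n_0}]/d_{\boldsymbol{\mu}}\in\bar{\Z}$. You then use $[G_n:G_{n-n_0}]=q^{N}\prod_{i=0}^{n_0-1}(q^{n-i}-1)$ and compare valuations at the primes above each $\ell\mid d$; the paper defers these details to Shah--Spallone, Corollary 3.3.1. Your use of $\gcd(d,q)=1$ to discard the $q$-power, and your passage from valuations at primes above $d$ to integrality, are exactly what is needed.
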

\begin{proof}
    For further details, see \cite[Corollary 3.3.1]{shah2024divisibility}.
\end{proof}

\begin{theorem}
Let $\ell$ be a prime number. We have
\[
v_\ell(d_{\boldsymbol{\mu}})
\ge
v_\ell \left(\frac{n!}{\prod_f |\boldsymbol{\mu}(f)|!}\right)
+
\sum_f v_\ell(\mathfrak{f}_{\boldsymbol{\mu}(f)}),
\]
where $f_\lambda$ is the dimension of the Specht module associated to $\lambda$ given by the hook length formula \eqref{hooklengthformula}.
\end{theorem}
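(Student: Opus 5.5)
Throughout I take $\ell\nmid q$; this is the only case used later, since $d=2^m$ and $q$ is odd. For $\ell\mid q$ the $\ell$-part of $d_{\boldsymbol\mu}$ is just $q^{\sum_f d(f)\alpha(\boldsymbol\mu(f))}$, which can be $1$, so the inequality can fail there.

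\emph{Step 1: rewrite the degree.} Using the factorization $d_{\boldsymbol\mu}=a_{\boldsymbol\mu}b_{\boldsymbol\mu}$ and \eqref{eq:unipotent-degree} with $q$ replaced by $q^{d(f)}$, the factors $\prod_{i\le|\boldsymbol\mu(f)|}(q^{d(f)i}-1)$ cancel between $a_{\boldsymbol\mu}$ and $b_{\boldsymbol\mu}$. Up to a power of $q$ this gives
\[
d_{\boldsymbol\mu}=\frac{\prod_{i=1}^n (q^i-1)}{\prod_{f\in\Phi}\prod_{h\in\mathcal H(\boldsymbol\mu(f))}(q^{d(f)|h|}-1)}.
\]
The right-hand side of the claimed inequality telescopes in the same way. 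By the hook length formula, $v_\ell(n!/\prod_f|\boldsymbol\mu(f)|!)+\sum_f v_\ell(\mathfrak f_{\boldsymbol\mu(f)})=v_\ell(n!)-\sum_f\sum_{h\in\mathcal H(\boldsymbol\mu(f))}v_\ell(|h|)$. So the statement becomes a comparison of two "hook quotients": the $q$-analogue $\prod(q^i-1)/\prod(q^{d(f)|h|}-1)$ against the classical one $n!/\prod|h|$.

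\emph{Step 2: cyclotomic bookkeeping.} Write $x^m-1=\prod_{k\mid m}\Phi_k(x)$. Then
\[
v_\ell(d_{\boldsymbol\mu})=\sum_{k\ge1} v_\ell(\Phi_k(q))\Big(\lfloor n/k\rfloor-\sum_f\#\{h\in\mathcal H(\boldsymbol\mu(f)) : k\mid d(f)|h|\}\Big),
\]
and similarly
\[
v_\ell(n!)-\sum_f\sum_h v_\ell(|h|)=\sum_{j\ge1}\Big(\lfloor n/\ell^j\rfloor-\sum_f\#\{h : \ell^j\mid |h|\}\Big).
\]
Let $e$ be the order of $q$ modulo $\ell$ (modulo $4$ if $\ell=2$). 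By the lifting-the-exponent lemma, $v_\ell(\Phi_k(q))\neq0$ only for $k=e\ell^j$, $j\ge0$ (and also $k=1$ when $\ell=2$), and $v_\ell(\Phi_{e\ell^j}(q))\ge1$ for every $j\ge0$.

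Each bracket in the first sum should be nonnegative. The number of hooks of a partition $\lambda$ of length divisible by $k'$ is the $k'$-weight of $\lambda$, which is at most $\lfloor|\lambda|/k'\rfloor$. Taking $k'=k/\gcd(k,d(f))$, the $f$-contribution is at most $\lfloor d(f)|\boldsymbol\mu(f)|/k\rfloor$, and these add up to at most $\lfloor n/k\rfloor$. So I can discard all $k$ except $k=e\ell^{j}$ and match them term by term against the $j$-th bracket of the second sum.

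\emph{Step 3: the matching, which I expect to be the main obstacle.} When $e=1$ (e.g.\ $\ell=2$ and $q\equiv1\pmod 4$), the brackets for $k=\ell^j$, $j\ge1$, are exactly the second sum, except for the replacement of $|h|$ by $d(f)|h|$. That replacement only increases the number of hooks counted. To handle it, I would group the polynomials $f$ by the $\ell$-adic valuation of $d(f)$ and use the fact that hooks with $\ell^j\mid d(f)|h|$ correspond to hooks with $\ell^{j-v_\ell(d(f))}\mid|h|$. I would then use the extra term $k=e$ (together with $k=1,2$ for $\ell=2$), whose coefficient is $v_\ell(q^e-1)\ge1$, to absorb the defect.

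When $e>1$, multiples of $e\ell^j$ are sparser than multiples of $\ell^j$. The plan there is to pass to $\ell$-quotients/cores with respect to $e\ell^j$, and to use that every $f$ with $e\nmid d(f)$ contributes no hooks to the first sum. I would first try to follow the argument of \cite{shah2024divisibility}, where exactly this comparison is carried out, and adapt it to check the required inequality bracket by bracket.
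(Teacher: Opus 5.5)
\textbf{There is a genuine gap, and part of your plan targets a false statement.} Step~3 is the entire content of the theorem, and you do not carry it out. Deferring to \cite{shah2024divisibility} is exactly what the paper does (its proof is that citation), but it is not an argument.

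More seriously, restricting to $\ell\nmid q$ is not enough. For odd $\ell$ with $e>1$ the inequality fails, so no passage to $e$-quotients or cores can succeed there. Take $q=5$, $\ell=3$ (so $e=2$), and $\boldsymbol\mu=\boldsymbol\mu_{(3,1)}$ in $\GL_4(\F_5)$. The hooks of $(3,1)$ are $4,2,1,1$, so $d_{\boldsymbol\mu}=q(q^3-1)/(q-1)=5\cdot 31$ and $v_3(d_{\boldsymbol\mu})=0$. The right-hand side, however, is $v_3(4!/4!)+v_3(\mathfrak f_{(3,1)})=v_3(3)=1$. In your notation, write $B_k$ for the $k$-th bracket of your first sum and $C_j$ for the $j$-th bracket of the second. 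Then $B_2=2-2=0$ and $B_{2\cdot 3^j}=0$ for $j\ge1$, but $C_1=1$.

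The statement therefore needs the hypothesis $q\equiv 1\pmod{\ell}$. This is automatic for $\ell=2$ and $q$ odd, which is the only case the paper uses. So the paper's ``$\ell$ a prime'' is too broad, and not only for $\ell\mid q$ as you observed.

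\textbf{How to close the gap where the statement is true.} Under that hypothesis your Steps~1--2 already suffice, with no quotient argument. Since $\boldsymbol\mu(f)$ has exactly $|\boldsymbol\mu(f)|$ hooks,
\[
B_1=n-\sum_f|\boldsymbol\mu(f)|=\sum_f|\boldsymbol\mu(f)|\,\bigl(d(f)-1\bigr)
\]
and
\[
\sum_{j\ge1}(C_j-B_{\ell^j})=\sum_f\sum_{h\in\mathcal H(\boldsymbol\mu(f))}\bigl(v_\ell(d(f)|h|)-v_\ell(|h|)\bigr)=\sum_f|\boldsymbol\mu(f)|\,v_\ell(d(f)).
\]
If $\ell\mid q-1$, then $\sum_k v_\ell(\Phi_k(q))B_k$ equals $v_\ell(q-1)B_1+\sum_{j\ge1}B_{\ell^j}$. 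When $\ell=2$ there is an extra term $(v_2(q+1)-1)B_2$, which is $\ge 0$ by your Step~2. Hence the difference of the two sides is at least
\[
\sum_f|\boldsymbol\mu(f)|\bigl(d(f)-1-v_\ell(d(f))\bigr)\ge0,
\]
because $v_\ell(q-1)\ge1$ and $v_\ell(d)+1\le \ell^{v_\ell(d)}\le d$.

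This also covers $\ell=2$, $q\equiv 3\pmod 4$, which you had placed in the ``$e>1$'' case. There the $k=1$ coefficient is still $v_2(q-1)=1$, and that is all the absorption needs. Note too that matching $B_{\ell^j}$ against $C_j$ term by term cannot work by itself, since $B_{\ell^j}\le C_j$; the entire surplus has to come from the $k=1$ term, as you suspected.
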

\begin{proof}
    This result is \cite[Theorem 4.4]{shah2024divisibility}.
\end{proof}

\begin{definition}
    Let $\mathscr{G}_n(\Phi)$ be the set of functions 
    \[F:\Phi\rightarrow \Z_{\geq 0}\] such that 
    \begin{itemize}
        \item $\sum_{f\in \Phi} d(f)F(f)=n$, 
        \item $F(f^*)=F(f)$ for all $f\in \Phi$. 
    \end{itemize}
\end{definition}
\noindent
Thus $F$ records the multiplicities of the elements of $\Phi$ subject to the self-duality condition, and can be viewed as the “type” of an element $\boldsymbol{\mu}\in \mathfrak{Y}_n$. For a fixed $F$, we consider the fiber
\[
\{\boldsymbol{\mu}\in \mathfrak{Y}_n\mid \boldsymbol{\mu}\mapsto F\},
\]
consisting of all objects with this prescribed data. The next result asserts that for large $n$, almost all $\boldsymbol{\mu}$ in a fixed fiber have valuation at least $v((n)_k)$.
\begin{lemma}\label{mu to F lemma}
    Let $k$ be a positive integer. Then for any $\epsilon>0$ there exists a large positive integer $N=N(\epsilon)>0$ such that for all $n\geq N$ and all $F\in \mathscr{G}_n(\Phi)$ we have
    \[\frac{\# \{\boldsymbol{\mu}\in \mathfrak{Y}_n\mid \boldsymbol{\mu}\mapsto F\quad \text{and}\quad v_2(d_{\boldsymbol{\mu}})< v_2((n)_k)\}}{\# \{\boldsymbol{\mu}\in \mathfrak{Y}_n\mid \boldsymbol{\mu}\mapsto F\}}<\epsilon\]
\end{lemma}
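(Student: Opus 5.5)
The plan is to work with the factorization $d_{\boldsymbol{\mu}}=a_{\boldsymbol{\mu}}b_{\boldsymbol{\mu}}$ rather than directly with the lower bound of \cite[Theorem 4.4]{shah2024divisibility}, because that bound can be too weak on some fibers. For example, if $F$ is supported on a single self-reciprocal $f$ of degree $n$, then both the multinomial and the Specht-module terms are trivial. The genuine $2$-divisibility then comes from $a_{\boldsymbol{\mu}}=\psi_n(q)/(q^n-1)$.

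The key observation is that $a_{\boldsymbol{\mu}}$ depends only on the type $F$, since it involves only $|\boldsymbol{\mu}(f)|=F(f)$. So on a fixed fiber $a_{\boldsymbol{\mu}}$ is constant, and only $b_{\boldsymbol{\mu}}$ varies. Moreover the fiber is a product: it is $\prod \Lambda_{F(f)}$ over the self-reciprocal $f$ with $F(f)>0$, times $\prod \Lambda_{F(f)}$ over one representative of each pair $\{f,f^*\}$ with $f\neq f^*$. In the paired case the single partition $\boldsymbol{\mu}(f)=\boldsymbol{\mu}(f^*)$ contributes twice to $b_{\boldsymbol{\mu}}$, which only helps.

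Set $T=T(n)=k+\log_2 n$. By Legendre's formula, as in the sketch of the Ganguly--Prasad--Spallone theorem, we have $v_2((n)_k)\le T$. This leads to a case split on each fiber.

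\emph{Case A: $v_2(a_{\boldsymbol{\mu}})\ge T$.} Then every $\boldsymbol{\mu}$ in the fiber satisfies $v_2(d_{\boldsymbol{\mu}})\ge v_2((n)_k)$, so the bad proportion is $0$.

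\emph{Case B: $v_2(a_{\boldsymbol{\mu}})<T$.} Here I would show that $F$ is forced to be concentrated: there is a single $f_0$ with $d(f_0)=1$ (so $f_0=x\mp1$) and $F(f_0)\ge n-C\log n$, for a constant $C$ depending on $k$ and $q$. To prove this I would estimate $v_2(a_{\boldsymbol{\mu}})$ from below by writing
\[
a_{\boldsymbol{\mu}}=\binom{n}{(d(f)F(f))_f}_{q}\cdot\prod_f \frac{\psi_{d(f)F(f)}(q)}{\psi_{F(f)}(q^{d(f)})}
\]
up to a power of $q$. Then I would apply the lifting-the-exponent formula for $v_2(q^m-1)$ factor by factor:
\begin{itemize}
\item $v_2(q^m-1)=v_2(q-1)$ for $m$ odd;
\item $v_2(q^m-1)=v_2(q^2-1)+v_2(m)-1$ for $m$ even.
\end{itemize}
The second family of factors, indexed by $f$ with $d(f)\ge 2$, should have $2$-valuation growing at least linearly in $(d(f)-1)F(f)$. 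The $q$-multinomial should have valuation at least roughly $\#\{f:F(f)>0\}-1$, plus a term counting the mass outside the largest part. Bounding these contributions by $T=O(\log n)$ gives the concentration statement.

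In Case B the fiber is essentially $\Lambda_{F(f_0)}$ times a set of size bounded in terms of $C\log n$. Since $b_{\boldsymbol{\mu}}\ge$ (in $2$-adic valuation) $d_{\boldsymbol{\mu}(f_0)}(q)$, whose valuation is at least $v_2(\mathfrak{f}_{\boldsymbol{\mu}(f_0)})$ by the argument behind \cite[Theorem 4.4]{shah2024divisibility}, the bad set has proportion at most
\[
\frac{\#\{\lambda\vdash F(f_0): v_2(\mathfrak{f}_\lambda)<T\}}{p(F(f_0))}.
\]
Because $F(f_0)\ge n-C\log n$, we get $\log_2 n\le \log_2 F(f_0)+1$ for large $n$. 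Hence $T\le r+\log_2 F(f_0)$ with $r=k+1$ fixed, and Theorem \ref{GPS thm 1} makes this proportion less than $\epsilon$ once $F(f_0)$, and hence $n$, is large. This bound is uniform in $F$, which yields $N(\epsilon)$.

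The main obstacle is the concentration step in Case B: a sharp enough lower bound on $v_2(a_{\boldsymbol{\mu}})$ uniformly over all types $F$. Binomials such as $\binom{2^r-1}{j}$ are odd, so small remainders cannot be excluded by digit-sum arguments alone. I would handle this by tracking the $q$-analogue, where the extra factors $v_2(q^{m}-1)\ge1$ for every $m$ give at least one power of $2$ per nonempty part of size outside $f_0$. That is what should bound the leftover mass by $O(T)$.
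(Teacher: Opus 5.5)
\textbf{Gap: the concentration claim in Case B is false.} The condition $v_2(a_{\boldsymbol{\mu}})<T$ does not force a single linear $f_0$ with $F(f_0)\ge n-C\log n$.

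Take $n=2^r-1$ with $r\ge2$, and set $F(x-1)=2^{r-1}-1$, $F(x+1)=2^{r-1}$, and $F=0$ elsewhere. This $F$ lies in $\mathscr{G}_n(\Phi)$ because $x\pm1$ are self-reciprocal, and $a_{\boldsymbol{\mu}}=\binom{n}{2^{r-1}}_q$. For $i$ odd, $v_2(q^i-1)=v_2(q-1)$; for $i$ even, $v_2(q^i-1)=v_2(q-1)+v_2(i)+v_2(q+1)-1$. Using this, one finds
\[
v_2(a_{\boldsymbol{\mu}})=v_2\tbinom{n}{2^{r-1}}+\bigl(v_2(q+1)-1\bigr)\Bigl(\bigl\lfloor\tfrac{n}{2}\bigr\rfloor-\bigl\lfloor\tfrac{2^{r-1}-1}{2}\bigr\rfloor-\bigl\lfloor\tfrac{2^{r-1}}{2}\bigr\rfloor\Bigr)=0+0=0 .
\]
Yet the largest part is only $(n+1)/2$, so $n-F(f_0)$ is not $O(\log n)$.

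The remedy in your last paragraph does not rescue this. For linear $f$, the numerator and denominators of the $q$-multinomial contain the same total number of factors $q^i-1$, so the guaranteed factors of $2$ from $v_2(q^i-1)\ge1$ cancel exactly. Already $\binom{3}{1}_q=q^2+q+1$ is odd. So there is no ``one power of $2$ per nonempty part'', and the claimed $n-C\log n$ bound cannot be proved this way.

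\textbf{Repair.} What your estimates do prove is enough after a small change.
\begin{itemize}
\item $\psi_{d(f)F(f)}(q)/\psi_{F(f)}(q^{d(f)})$ is the product of the $(d(f)-1)F(f)$ even integers $q^i-1$ with $i\le d(f)F(f)$ and $d(f)\nmid i$. The $q$-multinomial is an integer. Hence Case B gives $\sum_{d(f)\ge2}d(f)F(f)<2T$.
\item The remaining mass, at least $n-2T$, sits on at most $q-1$ linear polynomials. By pigeonhole, some linear $f_0$ has $F(f_0)\ge(n-2T)/(q-1)$.
\item Then $\log_2 n\le\log_2F(f_0)+\log_2(q-1)+1$ for large $n$. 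Apply Theorem~\ref{GPS thm 1} with $r=k+1+\log_2(q-1)$ in place of $k+1$; this finishes uniformly in $F$, exactly as in your last step.
\end{itemize}

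\textbf{Comparison with the paper.} With this repair your route is a variant of the paper's, which avoids $a_{\boldsymbol{\mu}}$ altogether.
\begin{itemize}
\item When $\max_{f\in\operatorname{supp}F}d(f)\le k$, the paper pigeonholes over the at most $q^{k+1}$ polynomials of degree $\le k$ to get $F(f_0)\ge n/(kq^{k+1})$, then applies Theorem~\ref{GPS thm 1}. Your bound $v_2((n)_k)\le k+\log_2 n$ is the direction actually needed in that step.
\item When $\max d(f)>k$, it notes $\sum_fF(f)\le n-k$. So the multinomial $n!/\prod_fF(f)!$ in \cite[Theorem 4.4]{shah2024divisibility} is divisible by $(n)_k$.
\end{itemize}

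Your stated reason for bypassing that theorem is a misreading. Its denominator is $\prod_f|\boldsymbol{\mu}(f)|!=\prod_fF(f)!$, not $\prod_f(d(f)F(f))!$. So for $F$ supported on one self-reciprocal polynomial of degree $n$, the multinomial equals $n!$, and such fibers are precisely what the paper's second case handles.
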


\begin{proof}
Fix $\varepsilon>0$. By Theorem~\ref{GPS thm 1}, there exists $M=M(\varepsilon)$ such that for all $m \ge M$,
\[
\frac{\#\{\lambda \vdash m : v_2(\mathfrak{f}_\lambda) < k + \log_2(k q^{k+1}) + \log_2 m\}}{p(m)} < \varepsilon.
\]
\noindent Set $N = k q^{k+1} M$, and let $n \ge N$. Fix $F \in \mathscr{G}_n(\Phi)$, so that $F(f)=F(f^*)$ for all $f \in \Phi$. We consider the set
\[
\{\boldsymbol{\mu} \in \mathfrak{Y}_n : \boldsymbol{\mu} \mapsto F\}.
\]
Such $\boldsymbol{\mu}$ are determined by choosing, for each orbit $\{f,f^*\}$, a partition $\lambda_f \vdash F(f)$, and setting $\boldsymbol{\mu}(f)=\boldsymbol{\mu}(f^*)=\lambda_f$. Thus there is a bijection
\[
\{\boldsymbol{\mu} \in \mathfrak{Y}_n : \boldsymbol{\mu} \mapsto F\}
\;\cong\;
\prod_{\{f,f^*\}} \Lambda_{F(f)}.
\]

We now consider two cases. First suppose that $\max_{f \in \operatorname{supp}(F)} d(f) \le k$. Since there are at most $q^{k+1}$ monic polynomials of degree at most $k$, the number of orbits $\{f,f^*\}$ in $\operatorname{supp}(F)$ is at most $q^{k+1}$. Therefore,
\[
\frac{1}{|\operatorname{supp}(F)|} \sum_{f \in \Phi} F(f)
\;\ge\;
\frac{\sum_{f \in \Phi} d(f) F(f)}{k q^{k+1}}
=
\frac{n}{k q^{k+1}}
\;\ge\;
M.
\]
Hence there exists $f_0 \in \operatorname{supp}(F)$ such that $m := F(f_0) \ge M$.

For any $\boldsymbol{\mu} \mapsto F$, we have
\[
v_2(d_{\boldsymbol{\mu}})
\ge
v_2\bigl(\mathfrak{f}_{\boldsymbol{\mu}(f_0)}\bigr),
\]
since $d_{\boldsymbol{\mu}}$ contains the factor $d_{\boldsymbol{\mu}(f_0)}(q^{d(f_0)})$, whose $2$-adic valuation is bounded below by that of $\mathfrak{f}_{\boldsymbol{\mu}(f_0)}$.

It follows that
\[
\begin{aligned}
&\frac{\# \{\boldsymbol{\mu}\in \mathfrak{Y}_n : \boldsymbol{\mu}\mapsto F,\ v_2(d_{\boldsymbol{\mu}})< v_2((n)_k)\}}
{\# \{\boldsymbol{\mu}\in \mathfrak{Y}_n : \boldsymbol{\mu}\mapsto F\}} \\
&\qquad\le
\frac{\#\{\lambda \vdash m : v_2(\mathfrak{f}_\lambda) < v_2((n)_k)\}}{p(m)}.
\end{aligned}
\]

Now observe that
\[
v_2((n)_k)
=
\sum_{i=0}^{k-1} v_2(n-i)
\;\ge\;
\log_2(n-k+1),
\]
so for $n \ge k q^{k+1} M$ we have
\[
v_2((n)_k)
\ge
k + \log_2(k q^{k+1}) + \log_2 m.
\]
Therefore,
\[
\frac{\#\{\lambda \vdash m : v_2(\mathfrak{f}_\lambda) < v_2((n)_k)\}}{p(m)}
<
\varepsilon,
\]
by the choice of $M$.

\par Next suppose that $\max_{f \in \operatorname{supp}(F)} d(f) > k$. Let $f_1 \in \operatorname{supp}(F)$ have maximal degree. Then
\[
\begin{aligned}
n &= \sum_{f \in \Phi} d(f) F(f) \\
&> k F(f_1) + \sum_{f \ne f_1} F(f) \\
&= (k-1)F(f_1) + \sum_{f \in \Phi} F(f).
\end{aligned}
\]
Since $F(f_1)\ge 1$, it follows that
\[
n-k \ge \sum_{f \in \Phi} F(f).
\]
Thus $\frac{n!}{\prod_f F(f)!}$ is divisible by $(n)_k$, and hence for any $\boldsymbol{\mu} \mapsto F$ we have
\[
v_2(d_{\boldsymbol{\mu}})
\ge
v_2\left(\frac{n!}{\prod_f F(f)!}\right)
\ge
v_2((n)_k).
\]
Therefore,
\[
\frac{\# \{\boldsymbol{\mu}\in \mathfrak{Y}_n : \boldsymbol{\mu}\mapsto F,\ v_2(d_{\boldsymbol{\mu}})< v_2((n)_k)\}}
{\# \{\boldsymbol{\mu}\in \mathfrak{Y}_n : \boldsymbol{\mu}\mapsto F\}}
= 0.
\]
\noindent Combining the two cases completes the proof.
\end{proof}

\begin{proposition}\label{v(n)_k) propn}
Let $v:=v_2$ and for any $k$, let $(n)_k := n!/(n-k)!$. Then
\[
\lim_{n\to\infty}
\frac{
\#\{\boldsymbol{\mu} \in \mathfrak{Y}_n : v(d_{\boldsymbol{\mu}}) < v((n)_k)\}
}{|\mathfrak{Y}_n|}
= 0.
\]
\end{proposition}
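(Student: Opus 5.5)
The plan is to deduce the proposition from Lemma~\ref{mu to F lemma} by partitioning $\mathfrak{Y}_n$ according to type. The map $\boldsymbol{\mu}\mapsto F$, defined by $F(f)=|\boldsymbol{\mu}(f)|$, sends $\mathfrak{Y}_n$ into $\mathscr{G}_n(\Phi)$. Indeed, $\sum_f d(f)|\boldsymbol{\mu}(f)|=n$, and self-duality $\boldsymbol{\mu}(f)=\boldsymbol{\mu}(f^*)$ gives $F(f)=F(f^*)$. So $\mathfrak{Y}_n$ is the disjoint union of the fibers over $F\in\mathscr{G}_n(\Phi)$. Some fibers may be empty; for example, when $f=f^*$ the partition must still be chosen consistently, but that imposes no obstruction. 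Fibers that are empty contribute zero to both numerator and denominator, so they can be discarded.

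Fix $\varepsilon>0$ and let $N=N(\varepsilon)$ be as in Lemma~\ref{mu to F lemma}. For $n\ge N$, write $B_F$ for the number of bad $\boldsymbol{\mu}$ (those with $v(d_{\boldsymbol{\mu}})<v((n)_k)$) in the fiber over $F$, and $T_F$ for the size of that fiber. The lemma gives $B_F<\varepsilon T_F$ for every $F$ with $T_F>0$. Summing over $F$,
\[
\#\{\boldsymbol{\mu}\in\mathfrak{Y}_n : v(d_{\boldsymbol{\mu}})<v((n)_k)\}=\sum_F B_F\le \varepsilon\sum_F T_F=\varepsilon\,|\mathfrak{Y}_n|.
\]
Hence the ratio in the proposition is at most $\varepsilon$ for all $n\ge N$. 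Since $\varepsilon$ was arbitrary, the limit is $0$.

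All the substance lies in Lemma~\ref{mu to F lemma}. The key point is that its bound is \emph{uniform} in $F$, so the averaging step is just this mediant-type inequality. The only care needed is to confirm that the fiber decomposition exactly exhausts $\mathfrak{Y}_n$, which follows directly from the definitions of $\mathfrak{Y}_n$ and $\mathscr{G}_n(\Phi)$, and that the uniformity in $F$ is indeed what the lemma provides: $N$ depends only on $\varepsilon$ (and on $k$ and $q$, which are fixed), not on $F$.
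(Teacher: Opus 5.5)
Your argument is correct and is essentially the paper's proof. You fix $\varepsilon$ and take $N(\varepsilon)$ from Lemma~\ref{mu to F lemma}, split $\mathfrak{Y}_n$ into the fibers of $\boldsymbol{\mu}\mapsto F$ over $\mathscr{G}_n(\Phi)$, and use the uniformity in $F$ of the lemma's bound to get a ratio below $\varepsilon$ after summing. No fiber is actually empty, since each is in bijection with $\prod_{\{f,f^*\}}\Lambda_{F(f)}$, so your caveat about discarding empty fibers is unnecessary but harmless.
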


\begin{proof}
Fix $\varepsilon>0$. Let $N=N(\varepsilon)$ be as in Lemma~\ref{mu to F lemma}. For all $n \ge N$, we decompose according to the map $\boldsymbol{\mu} \mapsto F$:
\[
\frac{
\#\{\boldsymbol{\mu} \in \mathfrak{Y}_n : v(d_{\boldsymbol{\mu}}) < v((n)_k)\}
}{|\mathfrak{Y}_n|}
=
\sum_{F \in \mathscr{G}_n(\Phi)}
\frac{
\#\{\boldsymbol{\mu} \in \mathfrak{Y}_n : \boldsymbol{\mu} \mapsto F,\ v(d_{\boldsymbol{\mu}}) < v((n)_k)\}
}{|\mathfrak{Y}_n|}.
\]
For each $F$, we write
\[
\begin{split}&\frac{
\#\{\boldsymbol{\mu} \in \mathfrak{Y}_n : \boldsymbol{\mu} \mapsto F,\ v(d_{\boldsymbol{\mu}}) < v((n)_k)\}
}{|\mathfrak{Y}_n|}\\
= &
\frac{
\#\{\boldsymbol{\mu} \in \mathfrak{Y}_n : \boldsymbol{\mu} \mapsto F,\ v(d_{\boldsymbol{\mu}}) < v((n)_k)\}
}{\#\{\boldsymbol{\mu} \in \mathfrak{Y}_n : \boldsymbol{\mu} \mapsto F\}}
\cdot
\frac{
\#\{\boldsymbol{\mu} \in \mathfrak{Y}_n : \boldsymbol{\mu} \mapsto F\}
}{|\mathfrak{Y}_n|}.
\end{split}
\]
By Lemma~\ref{mu to F lemma}, the first factor is $<\varepsilon$ for all $F \in \mathscr{G}_n(\Phi)$, provided $n \ge N$. Hence
\[
\frac{
\#\{\boldsymbol{\mu} \in \mathfrak{Y}_n : v(d_{\boldsymbol{\mu}}) < v((n)_k)\}
}{|\mathfrak{Y}_n|}
<
\varepsilon \sum_{F \in \mathscr{G}_n(\Phi)}
\frac{
\#\{\boldsymbol{\mu} \in \mathfrak{Y}_n : \boldsymbol{\mu} \mapsto F\}
}{|\mathfrak{Y}_n|}
=
\varepsilon.
\]
Since $\varepsilon>0$ is arbitrary, the result follows.
\end{proof}

\begin{proposition}\label{Propn limit is 0 Y_n}
For every integer $r \ge 1$,
\[
\lim_{n\to\infty}
\frac{
\#\{\boldsymbol{\mu} \in \mathfrak{Y}_n : 2^r \nmid \chi_{\boldsymbol{\mu}}(g)\}
}{|\mathfrak{Y}_n|}
= 0.
\]
\end{proposition}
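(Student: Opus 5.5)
We will combine the valuation criterion (the Proposition following Lemma~\ref{valuation lemma}) with Proposition~\ref{v(n)_k) propn}. Here $g\in G_{n_0}$ is fixed and $r\ge 1$ is given. By that criterion, with $d=2^r$ and $\ell=2$, it suffices to show that for almost all $\boldsymbol{\mu}\in\mathfrak{Y}_n$,
\[
v_2(d_{\boldsymbol{\mu}}) \ge r + v_2\Bigl(\prod_{i=0}^{n_0-1}(q^{n-i}-1)\Bigr).
\]
This is legitimate since $q$ is odd, so $2$ is coprime to $q$. The conclusion $\chi_{\boldsymbol{\mu}}(g)/2^r\in\bar{\Z}$ then gives divisibility in the ring of algebraic integers, which is the intended meaning of ``$2^r$ divides $\chi_{\boldsymbol{\mu}}(g)$''. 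The plan is therefore to bound the right-hand side above by $v_2((n)_k)$ for a suitable fixed $k$, and then quote Proposition~\ref{v(n)_k) propn}.

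The main step is a $2$-adic estimate of $v_2(q^{m}-1)$ via the lifting-the-exponent lemma. For odd $q$ one has $v_2(q^m-1)=v_2(q-1)$ if $m$ is odd, and $v_2(q^m-1)=v_2(q^2-1)+v_2(m)-1$ if $m$ is even. In either case
\[
v_2(q^m-1)\le C_q + v_2(m), \qquad C_q:=v_2(q^2-1).
\]
Summing over $m=n,n-1,\dots,n-n_0+1$ gives
\[
v_2\Bigl(\prod_{i=0}^{n_0-1}(q^{n-i}-1)\Bigr)\le n_0C_q + \sum_{i=0}^{n_0-1}v_2(n-i)= n_0C_q + v_2((n)_{n_0}).
\]

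We must then absorb the constant $r+n_0C_q$ into a falling factorial of larger length. Choose $k = n_0 + 2^{s}$ with $s$ large. For every $n$, the window $n-n_0, n-n_0-1,\dots,n-k+1$ consists of $2^s$ consecutive integers. Among them, at least $2^{s-1}$ are even, so the extra factors contribute $v_2\bigl((n-n_0)_{2^s}\bigr)\ge 2^{s-1}$. Hence
\[
v_2((n)_k)=v_2((n)_{n_0})+v_2\bigl((n-n_0)_{2^s}\bigr)\ge v_2((n)_{n_0})+2^{s-1}.
\]
Choosing $s$ with $2^{s-1}\ge r+n_0C_q$, every $\boldsymbol{\mu}$ with $v_2(d_{\boldsymbol{\mu}})\ge v_2((n)_k)$ satisfies the criterion. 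The exceptional set $\{\boldsymbol{\mu}: 2^r\nmid\chi_{\boldsymbol{\mu}}(g)\}$ is thus contained in $\{\boldsymbol{\mu}\in\mathfrak{Y}_n: v_2(d_{\boldsymbol{\mu}})<v_2((n)_k)\}$, whose proportion tends to $0$ by Proposition~\ref{v(n)_k) propn}.

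The only delicate point is the uniformity of the lifting-the-exponent bound: the constant must not depend on $n$. This holds because $C_q$ depends only on $q$, and the $n$-dependent part $v_2(n-i)$ is exactly matched by $(n)_{n_0}$. Everything else is bookkeeping, and since $k$ is fixed independently of $n$, Proposition~\ref{v(n)_k) propn} applies directly.
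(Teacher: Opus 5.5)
Your argument is correct and is essentially the paper's proof. Both use the valuation criterion from Lemma~\ref{valuation lemma}, the bound $v_2\bigl(\prod_{i=0}^{n_0-1}(q^{n-i}-1)\bigr)\le n_0\,v_2(q^2-1)+v_2((n)_{n_0})$, absorption of the constant into a longer falling factorial, and then the proposition that $v_2(d_{\boldsymbol{\mu}})<v_2((n)_k)$ only on a density-zero subset of $\mathfrak{Y}_n$.

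The differences are minor. You prove explicitly, via lifting-the-exponent and counting even integers in a window, the step the paper delegates to Shah--Spallone's Proposition~4.7. You also take $k=n_0+2^s$ rather than the paper's $k=n_0+2r_0$ with $r_0=r+n_0v_2(q^2-1)$; the paper's choice works for the same reason, since $2r_0$ consecutive integers contain $r_0$ even ones.
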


\begin{proof}
Let $g \in G_{n_0}$ be fixed. From Lemma \ref{valuation lemma}, we have that
\[
\frac{\chi_{\boldsymbol{\mu}}(g)}{d_{\boldsymbol{\mu}}}
\cdot [G_n : G_{n-n_0}]
\]
is an algebraic integer. It follows that if
\[
v_2(d_{\boldsymbol{\mu}})
\;\ge\;
r + v_2\!\left(\prod_{i=0}^{n_0-1} (q^{n-i}-1)\right),
\]
then $2^r \mid \chi_{\boldsymbol{\mu}}(g)$. Set
\[
r_0 := r + n_0\, v(q^2 - 1).
\]
As in the proof of \cite[Proposition 4.7]{shah2024divisibility}, we have that
\[
\{\boldsymbol{\mu} \in \mathfrak{Y}_n : 2^r \nmid \chi_{\boldsymbol{\mu}}(g)\}
\subset
\{\boldsymbol{\mu} \in \mathfrak{Y}_n : v(d_{\boldsymbol{\mu}}) < v((n)_{n_0 + 2r_0})\}.
\]
Applying Proposition \ref{v(n)_k) propn} with $k = n_0 + 2r_0$, we obtain
\[
\lim_{n\to\infty}
\frac{
\#\{\boldsymbol{\mu} \in \mathfrak{Y}_n : 2^r \nmid \chi_{\boldsymbol{\mu}}(g)\}
}{|\mathfrak{Y}_n|}
= 0,
\]
as required.
\end{proof}
We now derive consequences of the formulas for Stiefel--Whitney classes in terms of character values on elements of order $2$, together with the asymptotic divisibility results of the previous section. We show that, in the large rank limit, almost all irreducible orthogonal representations of $\op{GL}_n(\F_q)$ have trivial low-degree Stiefel--Whitney classes.

\begin{theorem}\label{thm asymp SW}
Let $q$ be odd.

\begin{enumerate}
\item One has
\[
\lim_{n\to\infty}
\frac{
\#\{\pi \in \op{OIrr}(G_n) : w_1(\pi)=w_2(\pi)=0\}
}{
|\op{OIrr}(G_n)|
}
= 1.
\]

\item If $q \equiv 1 \pmod{4}$, then
\[
\lim_{n\to\infty}
\frac{
\#\{\pi \in \op{OIrr}(G_n) : w_1(\pi)=w_2(\pi)=w_4(\pi)=0\}
}{
|\op{OIrr}(G_n)|
}
= 1.
\]
\end{enumerate}
\end{theorem}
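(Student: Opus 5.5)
The plan is to reduce every vanishing condition to a $2$-adic congruence on finitely many character values at fixed involutions. Then we apply Proposition~\ref{Propn limit is 0 Y_n} to each of these finitely many elements and intersect the resulting density-one sets. The fixed elements are
\[
h_1=\diag(-1,1,\dots,1),\qquad h_2=\diag(-1,-1,1,\dots,1),\qquad t_x=\diag(x,1,\dots,1),
\]
together with the identity. They lie in $G_{n_0}$ with $n_0=2$, embedded block-diagonally in $G_n$ as in the statement of the divisibility theorem. Since a finite intersection of sets of density $1$ in $\mathfrak{Y}_n\cong\op{OIrr}(G_n)$ again has density $1$, it suffices to check the following. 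For some fixed $r$ (say $r=4$), the conditions $2^r\mid\chi_\pi(1)$, $2^r\mid\chi_\pi(h_1)$, $2^r\mid\chi_\pi(h_2)$ and $2^r\mid\chi_\pi(t_x)$ together force the relevant Stiefel--Whitney classes to vanish. Here $\chi_\pi(1)=\dim\pi$ is handled by Proposition~\ref{Propn limit is 0 Y_n} with $g=1$, or directly by Proposition~\ref{v(n)_k) propn}. Since the character values at involutions are rational integers, divisibility in $\bar{\Z}$ is ordinary divisibility.

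\textbf{Step 1 ($w_1$).} By \eqref{w_1 basic fact}, $w_1(\pi)=0$ if and only if $m_x=(\dim\pi-\chi_\pi(t_x))/2$ is even. If $4$ divides both $\dim\pi$ and $\chi_\pi(t_x)$, then $4\mid \dim\pi-\chi_\pi(t_x)$, so $m_x$ is even.

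\textbf{Step 2 ($w_2$).} We use Theorem~\ref{theorem about w_2 and w_4} together with injectivity of restriction to $D$ (when $q\equiv1\pmod4$) or to $C_2^n$ (when $q\equiv 3\pmod 4$).
\begin{itemize}
\item If $q\equiv 1\pmod 4$, we need $m_\pi/2$ even, i.e.\ $8\mid \dim\pi-\chi_\pi(h_1)$. This follows from $8\mid\dim\pi$ and $8\mid\chi_\pi(h_1)$.
\item If $q\equiv 3\pmod 4$, we need $\binom{m_\pi}{2}$ even. This holds when $4\mid m_\pi$, i.e.\ when $8\mid\dim\pi-\chi_\pi(h_1)$.
\end{itemize}

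\textbf{Step 3 ($w_4$, $q\equiv1\pmod4$).} We need $\binom{m_\pi/2}{2}$ even and $(\dim\pi-\chi_\pi(h_2))/8$ even.
\begin{itemize}
\item The first holds if $4\mid m_\pi/2$, i.e.\ $16\mid\dim\pi-\chi_\pi(h_1)$.
\item The second holds if $16\mid \dim\pi-\chi_\pi(h_2)$.
\end{itemize}
So $r=4$ suffices, applied to $g=1,h_1,h_2,t_x$.

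The main obstacle is not this bookkeeping, which is routine. It is making sure Proposition~\ref{Propn limit is 0 Y_n} legitimately applies uniformly. First, one must handle $g=1$; the proof there reduces to Proposition~\ref{v(n)_k) propn} anyway. Second, the formulas of Theorem~\ref{theorem about w_2 and w_4} must hold for all $n$ with the stated coefficients. One also has to take care that the identification of $\op{OIrr}(G_n)$ with $\mathfrak{Y}_n$ (Proposition~\ref{selfdualimpliesorthogonal}) lets us transfer density statements, and that each orthogonal $\pi$ is realized over $\mathbb{R}$ so that Lemma~\ref{lemma det involution} and the formulas apply. With these checked, taking the intersection of the four density-one sets gives both parts of the theorem.
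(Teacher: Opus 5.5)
\textbf{Step 1 has a gap when $q\equiv 1\pmod 4$.} Your overall strategy is the paper's: its proof just combines \eqref{w_1 basic fact}, Theorem~\ref{theorem about w_2 and w_4} and Proposition~\ref{Propn limit is 0 Y_n}. Your Steps 2 and 3 correctly show that $2^4$-divisibility of $\chi_\pi$ at $1,h_1,h_2$ kills $w_2$, and also $w_4$ when $q\equiv 1\pmod 4$.

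The trouble is that for $q\equiv1\pmod4$ the element $t_x$ is not an involution. A nonsquare $x\in\F_q^\times$ has order divisible by $2^{v_2(q-1)}\ge 4$. In fact every involution of $G_n$ is conjugate to some $h_i$ and so has square determinant; hence $\det\pi$ is trivial on all involutions and cannot be detected there. Consequently Lemma~\ref{lemma det involution} does not apply, and $\chi_\pi(t_x)$ need not be rational: for $\Ind_B(\chi\otimes\chi^{-1})$ on $\GL_2(\F_9)$ with $\chi$ of order $8$ it equals $\sqrt2$. Worse, the criterion ``$m_x$ even $\Rightarrow w_1(\pi)=0$'' is false.

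Here is a counterexample. Take $q=5$, $n=2$, $x=2$, and a self-dual cuspidal $\pi_\theta$ with $\theta$ of order $3$ or $6$ on $\F_{25}^\times$. Its restriction to the mirabolic subgroup $P=\{\left(\begin{smallmatrix}a&b\\0&1\end{smallmatrix}\right)\}$ is $\Ind_N^P\psi$. So on $\{\diag(a,1)\}\cong\F_5^\times$ it is the regular representation, and $\pi_\theta(t_x)$ has eigenvalues $1,i,-1,-i$. Then $\dim\pi_\theta=4$ and $\chi_{\pi_\theta}(t_x)=0$ are both divisible by $4$ and $m_x=2$, yet $\det\pi_\theta(t_x)=-1$, so $w_1(\pi_\theta)\neq 0$. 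Your sufficient condition in Step 1 is therefore not sufficient. The paper's one-line proof rests on the same criterion, so you inherit this gap rather than introduce it. For $q\equiv 3\pmod 4$, where $t_x=h_1$, your Step 1 is fine.

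\textbf{Repair within your framework.} For $q\equiv 1\pmod 4$, choose $x$ of order $2^a$ with $a=v_2(q-1)$; such an $x$ is automatically a nonsquare. Apply Proposition~\ref{Propn limit is 0 Y_n} to the finitely many elements $t_x^k\in G_1$, $0\le k<2^a$, with $r=2a$. Suppose $2^{2a}$ divides every $\chi_\pi(t_x^k)$ in $\bar{\Z}$. Let $\zeta$ be a primitive $2^a$th root of unity and $a_j$ the multiplicity of $\zeta^j$ as an eigenvalue of $\pi(t_x)$. Then
\[
2^a a_j=\sum_{k=0}^{2^a-1}\chi_\pi(t_x^k)\,\zeta^{-jk}\in 2^{2a}\bar{\Z}\cap\Z=2^{2a}\Z ,
\]
so $2^a\mid a_j$ for all $j$, and $\det\pi(t_x)=\zeta^{\sum_j j a_j}=1$. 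Since $\det\pi$ factors through $\det$ modulo squares, this gives $w_1(\pi)=0$. Adding these elements to your list still leaves a finite intersection of density-one sets, and the rest of your argument goes through unchanged.
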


\begin{proof}
The result follows directly from \eqref{w_1 basic fact}, Theorem \ref{theorem about w_2 and w_4} and Proposition \ref{Propn limit is 0 Y_n}.
\end{proof}

\section{Large $q$ limit for $\GL_2$}\label{s 4}

In this section we study the asymptotic behavior, as $q \to \infty$ with $q$ odd, of the values of irreducible orthogonal characters of $\GL_2(\F_q)$ on a fixed semisimple element. Our goal is to understand the frequency with which powers of $2$ divide such character values. This provides a sharp contrast with the results in the previous section.

\subsection{Classification of irreducible representations}

Let $G_q := \GL_2(\F_q)$. We recall the classification of irreducible complex representations of $G_q$, following \cite{BushnellHenniart}. We fix the standard subgroups:
\[
B = \left\{ \begin{pmatrix} a & b \\ 0 & d \end{pmatrix} \right\}, \quad
N = \left\{ \begin{pmatrix} 1 & x \\ 0 & 1 \end{pmatrix} \right\}, \quad
T = \left\{ \begin{pmatrix} a & 0 \\ 0 & d \end{pmatrix} \right\}, \quad
Z = \left\{ \begin{pmatrix} a & 0 \\ 0 & a \end{pmatrix} \right\}.
\]
Thus $B = TN$ and $N \simeq (\F_q,+)$. Irreducible representations of $G_q$ fall into the following classes:

\begin{enumerate}
\item \textbf{One-dimensional representations.} This is the family of $1$-dimensional representations which factor through the determinant:
\[
\pi = \psi \circ \det,
\]
where $\psi : \F_q^\times \to \mathbb{C}^\times$ is a character. Since $|\F_q^\times| = q-1$, there are exactly $q-1$ such representations.

\item \textbf{Principal series representations.} Let $\chi_1, \chi_2 : \F_q^\times \to \mathbb{C}^\times$ be characters. Define a character of $T$ by
\[
\chi = \chi_1 \otimes \chi_2, \qquad 
\chi\!\left(\begin{pmatrix} a & 0 \\ 0 & d \end{pmatrix}\right) = \chi_1(a)\chi_2(d),
\]
and inflate it to $B$. Consider
\[
\pi(\chi_1,\chi_2) := \Ind_B^{G_q} \chi.
\]
Then one has that
\begin{itemize}
\item $\Ind_B^{G_q} \chi$ is irreducible if and only if $\chi_1 \neq \chi_2$,
\item if $\chi_1 = \chi_2$, then $\Ind_B^{G_q} \chi$ has length $2$.
\end{itemize}

In the reducible case $\chi_1=\chi_2=\psi$, one has
\[
\Ind_B^{G_q}(\psi \otimes \psi)
= (\psi \circ \det) \oplus \left(\St \otimes (\psi \circ \det)\right),
\]
where $\op{St}$ is the \emph{Steinberg representation}.
If $\chi_1 \neq \chi_2$, the representation is irreducible of dimension
\[
\dim \pi(\chi_1,\chi_2) = [G_q : B] = q+1.
\]
\noindent Since $\pi(\chi_1,\chi_2) \cong \pi(\chi_2,\chi_1)$, the number of distinct such irreducible Steinberg representations is
\[
\frac{(q-1)(q-2)}{2}.
\]

\item \textbf{Steinberg representations.} The Steinberg representation $\St$ is defined by $\Ind_B^{G_q}(1) = 1 \oplus \St$. It is irreducible of dimension $q$. More generally, for any character $\psi : \F_q^\times \to \mathbb{C}^\times$, one obtains
\[
\St \otimes (\psi \circ \det),
\]
which are all irreducible of dimension $q$. Thus there are $(q-1)$ such representations.
\item \textbf{Cuspidal representations.} An irreducible representation $\pi$ is called \emph{cuspidal} if it does not contain the trivial character of $N$. Let $\F_{q^2}/\F_q$ be the quadratic extension, and let $\theta : \F_{q^2}^\times \to \mathbb{C}^\times$ be a character. Denote by $\theta^q(x) := \theta(x^q)$. A character $\theta$ is called \emph{regular} if $\theta^q \neq \theta$. Using the embedding $\F_{q^2}^\times \hookrightarrow G_q$ arising from its action on $\F_{q^2}$ as a $2$-dimensional $\F_q$-vector space, one constructs a virtual representation
\[
\pi_\theta = \Ind_{ZN}^{G_q}(\theta \otimes \psi) - \Ind_{\F_{q^2}^\times}^{G_q} \theta,
\]
where $\psi$ is a non-trivial character of $N$. Then, the following assertions hold:
\begin{itemize}
\item $\pi_\theta$ is irreducible of dimension $q-1$,
\item $\pi_\theta \cong \pi_{\theta'}$ if and only if $\theta' = \theta$ or $\theta' = \theta^q$,
\item every cuspidal representation arises this way.
\end{itemize}
\noindent Since there are $q^2-1$ characters of $\F_{q^2}^\times$, and exactly $(q-1)$ of them satisfy $\theta^{q-1}=1$, the number of regular characters is \[q^2-1 - (q-1) = q(q-1).\]
Dividing by the equivalence $\theta \sim \theta^q$, we obtain $\frac{q(q-1)}{2}$ cuspidal representations.

\end{enumerate}

In summary, the total number of irreducible representations is
\[
(q-1) + \frac{(q-1)(q-2)}{2} + (q-1) + \frac{q(q-1)}{2}
= q^2 - 1,
\]
in agreement with the number of conjugacy classes of $G_q$.

Recall that for any finite-dimensional representation $\pi$ of $G_q$, the contragredient representation $\pi^\vee$ has character $\chi_{\pi^\vee}(g)=\overline{\chi_\pi(g)}$. We say that $\pi$ is \emph{self-dual} if $\pi \simeq \pi^\vee$.

\begin{lemma}\label{ind dual iso}
Let $G$ be a finite group, $H \subset G$ a subgroup, and let $(\sigma,V)$ be a finite-dimensional complex representation of $H$. Then there is a natural isomorphism of $G$-representations
\[
(\Ind_H^G \sigma)^\vee \;\simeq\; \Ind_H^G(\sigma^\vee).
\]
\end{lemma}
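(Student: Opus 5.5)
The plan is to verify the isomorphism at the level of characters, and then to write down an explicit natural map. Characters determine complex representations of finite groups up to isomorphism, so the character computation alone gives an isomorphism. Naturality requires the explicit map.

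For the character computation, recall that $\chi_{\rho^\vee}(g)=\chi_\rho(g^{-1})=\overline{\chi_\rho(g)}$ for any representation $\rho$. By Frobenius' formula for induced characters,
\[
\chi_{\Ind_H^G\sigma}(g)=\frac{1}{|H|}\sum_{\substack{x\in G\\ x^{-1}gx\in H}}\chi_\sigma(x^{-1}gx).
\]
Taking complex conjugates term by term gives
\[
\overline{\chi_{\Ind_H^G\sigma}(g)}=\frac{1}{|H|}\sum_{\substack{x\in G\\ x^{-1}gx\in H}}\chi_{\sigma^\vee}(x^{-1}gx)=\chi_{\Ind_H^G(\sigma^\vee)}(g).
\]
Hence the two representations have equal characters and are therefore isomorphic.

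For naturality, realize $\Ind_H^G\sigma$ as the space of functions $\phi:G\to V$ with $\phi(hx)=\sigma(h)\phi(x)$, on which $G$ acts by right translation. Realize $\Ind_H^G(\sigma^\vee)$ in the same way, with $V^\vee$ and $\sigma^\vee$ in place of $V$ and $\sigma$. Define the pairing
\[
\langle \psi,\phi\rangle:=\sum_{x\in H\backslash G}\psi(x)\bigl(\phi(x)\bigr),
\qquad \psi\in\Ind_H^G(\sigma^\vee),\ \phi\in\Ind_H^G\sigma.
\]
The summand depends only on the coset $Hx$, because $\sigma^\vee(h)\psi(x)$ evaluated on $\sigma(h)\phi(x)$ equals $\psi(x)(\phi(x))$. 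The pairing is $G$-invariant, since right translation permutes the cosets $H\backslash G$.

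It remains to show the pairing is nondegenerate. Choose coset representatives $x_1,\dots,x_r$ for $H\backslash G$. Evaluation at these representatives identifies $\Ind_H^G\sigma$ with $V^{r}$ and $\Ind_H^G(\sigma^\vee)$ with $(V^\vee)^r$. Under these identifications the pairing becomes the sum of the standard evaluation pairings $V^\vee\times V\to\mathbb C$, which is perfect. The induced map $\Ind_H^G(\sigma^\vee)\to(\Ind_H^G\sigma)^\vee$ is therefore a $G$-equivariant isomorphism. It is natural in $\sigma$ because its definition involves no choices.

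The only step needing care is checking that the pairing is well defined on $H\backslash G$. That check is immediate from the defining relation $\sigma^\vee(h)=\sigma(h^{-1})^{t}$, so I expect no real obstacle.
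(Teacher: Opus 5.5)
Your proof is correct and is essentially the paper's argument. Both use the same function-space model of $\Ind_H^G$, the same $G$-invariant pairing $\sum_{x\in H\backslash G}\psi(x)(\phi(x))$, and check nondegeneracy coset by coset; your identification of the two induced spaces with $V^r$ and $(V^\vee)^r$ is a tidier version of the paper's choice of a $\lambda$ supported on a single coset. The preliminary character computation is valid but redundant, and your check that the summand is well defined on $H\backslash G$ fills in a point the paper leaves implicit.
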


\begin{proof}
Recall that
\[
\Ind_H^G V
=
\{ f: G \to V \mid f(hg)=\sigma(h)f(g)\ \text{for all } h\in H,\, g\in G \},
\]
has $G$-action given by
\[
(g\cdot f)(x)=f(xg), \quad\text{for}\quad g,x\in G.
\]
Similarly,
\[
\Ind_H^G V^\vee
=
\{ \lambda: G \to V^\vee \mid \lambda(hg)=\sigma^\vee(h)\lambda(g)\ \text{for all } h\in H,\, g\in G \},
\]
with the same right translation action.

Define a pairing
\[
\langle\cdot,\cdot\rangle : \Ind_H^G V \times \Ind_H^G V^\vee \longrightarrow \mathbb{C}
\]
by
\[
\langle f,\lambda\rangle
:=
\sum_{x \in H\backslash G} \lambda(x)\bigl(f(x)\bigr).
\]
For $g\in G$,
\[
\langle g\cdot f,\lambda\rangle
=
\sum_{x\in H\backslash G} \lambda(x)\bigl(f(xg)\bigr).
\]
Making the change of variables $y=xg$, which permutes the cosets $H\backslash G$, we obtain
\[
\langle g\cdot f,\lambda\rangle
=
\sum_{y\in H\backslash G} \lambda(yg^{-1})\bigl(f(y)\bigr)
=
\langle f, g^{-1}\cdot \lambda\rangle,
\]
since $(g^{-1}\cdot \lambda)(y)=\lambda(yg^{-1})$. Thus the pairing is $G$-invariant.
\par This pairing is clearly bilinear, and nondegenerate. If $f\neq 0$, then there exists $x$ such that $f(x)\neq 0$, and one may choose $\lambda$ supported on the coset $Hx$ so that $\lambda(x)(f(x))\neq 0$. Thus, $\langle f, \lambda\rangle \neq 0$. Similarly, if $\lambda\neq 0$, one may choose $f$ such that $\langle f, \lambda\rangle \neq 0$. \par Therefore the pairing identifies $\Ind_H^G V^\vee$ with $(\Ind_H^G V)^\vee$ as $G$-representations, yielding the desired isomorphism
\[
(\Ind_H^G \sigma)^\vee \simeq \Ind_H^G(\sigma^\vee).
\]
\end{proof}

\begin{proposition}\label{steinberg self sual prop}
Let $G=\GL_2(\F_q)$ and let $\St$ denote the Steinberg representation. Then
\[
\St^\vee \simeq \St.
\]
\end{proposition}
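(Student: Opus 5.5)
We will deduce self-duality of $\St$ from the decomposition $\Ind_B^{G}(1) = 1 \oplus \St$ recalled in the classification above, together with Lemma~\ref{ind dual iso}. By Lemma~\ref{ind dual iso} applied with $H=B$ and $\sigma$ the trivial character of $B$, whose dual is again trivial, we get
\[
\bigl(\Ind_B^{G}1\bigr)^\vee \simeq \Ind_B^{G}(1^\vee) = \Ind_B^{G}1 .
\]
Dualizing the decomposition gives $(\Ind_B^G 1)^\vee \simeq 1^\vee \oplus \St^\vee = 1 \oplus \St^\vee$. Hence
\[
1 \oplus \St \simeq 1 \oplus \St^\vee
\]
as $G$-representations.

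Since complex representations of a finite group are semisimple, we may cancel the common summand. Concretely, comparing characters gives $1 + \chi_{\St} = 1 + \chi_{\St^\vee}$, so $\chi_{\St} = \chi_{\St^\vee}$. Representations are determined up to isomorphism by their characters, so $\St^\vee \simeq \St$.

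As an independent check, we can compute directly with characters. The permutation character of $G$ on $G/B \cong \mathbb{P}^1(\F_q)$ counts fixed points, so it is integer valued. Therefore $\chi_{\St} = \chi_{\Ind_B^G 1} - 1$ is real valued, and $\chi_{\St^\vee} = \overline{\chi_{\St}} = \chi_{\St}$.

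No step here presents a real obstacle; the only point needing care is the cancellation of the summand $1$, which is justified by complete reducibility or by the character comparison above.
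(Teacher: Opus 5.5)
Your proof is correct. It starts where the paper's does: both use Lemma~\ref{ind dual iso} to see that $\Ind_B^G\mathbf{1}$ is self-dual. The two arguments then remove the trivial summand differently.

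The paper uses the explicit $G$-invariant symmetric pairing $\langle f_1,f_2\rangle=\sum_{x\in B\backslash G}f_1(x)f_2(x)$. It identifies $\St$ with the orthogonal complement $\mathbf{1}^\perp$ of the constants and restricts the pairing to that subspace. You instead cancel $\mathbf{1}$ from $\mathbf{1}\oplus\St\simeq\mathbf{1}\oplus\St^\vee$ by comparing characters.

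Your version is shorter, and it avoids a point the paper passes over silently. The restriction of the pairing to $\mathbf{1}^\perp$ stays nondegenerate only because $\langle\mathbf{1},\mathbf{1}\rangle=q+1\neq 0$, which gives $\Ind_B^G\mathbf{1}=\mathbf{1}\oplus\mathbf{1}^\perp$. Your ``independent check'' via the integer-valued permutation character is more elementary still, since it does not need Lemma~\ref{ind dual iso} at all.

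In return, the paper's construction gives an explicit invariant \emph{symmetric} form on $\St$. That shows directly that $\St$ is orthogonal rather than merely self-dual, which is the property relevant for Stiefel--Whitney classes. From your character argument alone you would have to invoke Proposition~\ref{selfdualimpliesorthogonal} to get orthogonality. Alternatively, you could note that $\mathbf{1}^\perp$ is defined over $\mathbb{Q}$.
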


\begin{proof}
Let $B\subset G$ be the Borel subgroup of upper triangular matrices, and consider the (unnormalized) induced representation
\[
\pi := \Ind_B^G \mathbf{1}.
\]
It is well known that $\pi$ contains the trivial representation $\mathbf{1}$ as a subrepresentation (via constant functions), and that the Steinberg representation is defined as the quotient
\[
\St := \pi / \mathbf{1}.
\]
\noindent We first observe that $\pi$ is self-dual. Indeed, by Lemma~\ref{ind dual iso},
\[
\pi^\vee = (\Ind_B^G \mathbf{1})^\vee \simeq \Ind_B^G(\mathbf{1}) = \pi.
\]
\noindent Realize $\pi$ as
\[
\pi = \{ f: G \to \mathbb{C} \mid f(bg)=f(g)\ \text{for all } b\in B \}.
\]
Define
\[
\langle f_1, f_2 \rangle := \sum_{x \in B\backslash G} f_1(x) f_2(x).
\]
As in the proof of Lemma \ref{ind dual iso}, $\pi \simeq \pi^\vee$ via this pairing.

\par Consider the subrepresentation $\mathbf{1} \subset \pi$ consisting of constant functions. We have that
\[
\mathbf{1}^\perp
=
\left\{ f \in \pi \;\middle|\; \sum_{x\in B\backslash G} f(x)=0 \right\}.
\]
This subspace has codimension one, hence
\[
\mathbf{1}^\perp \simeq \St.
\]
\noindent Since the pairing on $\pi$ is nondegenerate and $G$-invariant, it induces a nondegenerate $G$-invariant pairing on $\mathbf{1}^\perp$. Therefore $\mathbf{1}^\perp$ is self-dual, and thus, $\St^\vee \simeq \St$, as claimed.
\end{proof}

\begin{lemma}\label{duality for cuspidal}
Let $G_q=\GL_2(\F_q)$ and let $\theta:\F_{q^2}^\times\to\mathbb{C}^\times$ be a character with $\theta\neq\theta^q$. Let $\pi_\theta$ be the associated cuspidal representation, realized as the virtual representation
\[
\pi_\theta = \Ind_{ZN}^{G_q}(\theta_\psi) - \Ind_E^{G_q}(\theta).
\]
Then
\[
\pi_\theta^\vee \simeq \pi_{\theta^{-1}}.
\]
\end{lemma}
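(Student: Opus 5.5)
The plan is to take duals termwise in the virtual expression for $\pi_\theta$ and use Lemma~\ref{ind dual iso} on each induced piece. Duality is additive on the Grothendieck group: if $\pi_\theta = A - B$ with $A,B$ genuine representations, then the character of $\pi_\theta^\vee$ is the complex conjugate of $\chi_A - \chi_B$, namely $\chi_{A^\vee}-\chi_{B^\vee}$. Thus in the Grothendieck group
\[
\pi_\theta^\vee = \bigl(\Ind_{ZN}^{G_q}\theta_\psi\bigr)^\vee - \bigl(\Ind_E^{G_q}\theta\bigr)^\vee .
\]
Since $\pi_\theta$ is irreducible, so is $\pi_\theta^\vee$. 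Two irreducible representations are isomorphic once their characters agree, so it suffices to identify this virtual character with that of $\pi_{\theta^{-1}}$.

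Apply Lemma~\ref{ind dual iso} to both terms. Here $E=\F_{q^2}^\times$, and the dual of the one-dimensional character $\theta$ is $\theta^{-1}$, so $(\Ind_E^{G_q}\theta)^\vee\simeq \Ind_E^{G_q}\theta^{-1}$. For the first term, $\theta_\psi$ is the character $zn\mapsto \theta(z)\psi(n)$ of $ZN$, with $\theta$ restricted to $Z\simeq\F_q^\times$. Its dual is the character $zn\mapsto\theta^{-1}(z)\psi^{-1}(n)$, so $(\Ind_{ZN}^{G_q}\theta_\psi)^\vee\simeq \Ind_{ZN}^{G_q}\bigl((\theta^{-1})_{\psi^{-1}}\bigr)$. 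This is the same kind of term that defines $\pi_{\theta^{-1}}$, except that the additive character is $\psi^{-1}$ instead of $\psi$.

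The step I expect to be the main obstacle is showing that the construction of $\pi_\theta$ does not depend on the choice of the nontrivial character $\psi$ of $N$. I would handle it by conjugating with the diagonal torus. Every nontrivial character of $N\simeq\F_q$ has the form $\psi_a(x)=\psi(ax)$ with $a\in\F_q^\times$. Take $t=\diag(a,1)$. It centralizes $Z$, normalizes $N$, and satisfies $t\,n(x)\,t^{-1}=n(ax)$. Conjugating the inducing character by $t$ therefore replaces $\theta_\psi$ by $\theta_{\psi_a}$ (or by $\theta_{\psi_{a^{-1}}}$, depending on the convention) while leaving the $Z$-part unchanged. Conjugate characters induce isomorphic representations, so $\Ind_{ZN}^{G_q}\theta_{\psi}\simeq\Ind_{ZN}^{G_q}\theta_{\psi_a}$ for every $a$. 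Taking $a=-1$ gives $\Ind_{ZN}^{G_q}((\theta^{-1})_{\psi^{-1}})\simeq \Ind_{ZN}^{G_q}((\theta^{-1})_{\psi})$. I also need $\theta^{-1}$ to be regular, which holds because $(\theta^{-1})^q=(\theta^q)^{-1}\neq\theta^{-1}$. So $\pi_{\theta^{-1}}$ is defined and is the same virtual representation whichever $\psi$ is used.

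Combining these identifications, the virtual character of $\pi_\theta^\vee$ equals that of $\Ind_{ZN}^{G_q}((\theta^{-1})_\psi)-\Ind_E^{G_q}\theta^{-1}=\pi_{\theta^{-1}}$. Both sides are irreducible, so $\pi_\theta^\vee\simeq\pi_{\theta^{-1}}$. As a cross-check, one could instead compare the explicit character values of $\pi_\theta$: $(q-1)\theta(z)$ on central $z$, $-\theta(z)$ on $zu$ with $u$ a nontrivial unipotent, $0$ on split semisimple classes, and $-(\theta(\zeta)+\theta(\zeta^q))$ on elliptic $\zeta$. Complex conjugation turns each of these into the corresponding value for $\theta^{-1}$.
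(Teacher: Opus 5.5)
Your proposal is correct and takes essentially the same route as the paper: dualize the virtual expression termwise via Lemma~\ref{ind dual iso}, then turn $\psi^{-1}$ back into $\psi$ by conjugating the inducing character of $ZN$. You also make explicit two points the paper leaves implicit: the conjugating element $t=\diag(-1,1)$, which centralizes $Z$ and normalizes $N$, and the regularity of $\theta^{-1}$, which is needed for $\pi_{\theta^{-1}}$ to be defined.
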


\begin{proof}
We dualize the defining expression term by term. For any subgroup $H\subset G_q$ and any finite-dimensional representation $\sigma$ of $H$, we have
\[
(\Ind_H^{G_q} \sigma)^\vee \simeq \Ind_H^{G_q}(\sigma^\vee).
\]
Applying this to each term gives
\[
\pi_\theta^\vee
=
\Ind_{ZN}^{G_q}(\theta_\psi^\vee)
-
\Ind_E^{G_q}(\theta^\vee).
\]
\noindent Since $\theta$ is one-dimensional,
\[
\theta^\vee = \theta^{-1}.
\]
Thus
\[
(\Ind_E^{G_q} \theta)^\vee \simeq \Ind_E^{G_q}(\theta^{-1}).
\]
\noindent Recall that $\theta_\psi$ is defined on $ZN$ by
\[
\theta_\psi(zn) = \theta(z)\,\psi(n),
\]
where $\psi:N\to\mathbb{C}^\times$ is a nontrivial additive character (via the standard identification $N\simeq \F_q$). Thus
\[
\theta_\psi^\vee(zn)
=
\theta(z)^{-1}\,\psi(n)^{-1}.
\]
Since $\psi^{-1}$ is again a nontrivial additive character of $N$, it differs from $\psi$ by conjugation in $G_q$: more precisely, there exists $g\in G_q$ such that
\[
\psi(n)^{-1} = \psi(gng^{-1}) \quad \text{for all } n\in N.
\]
It follows that $\theta_\psi^\vee$ is conjugate (as a representation of $ZN$) to $(\theta^{-1})_\psi$. Therefore
\[
\Ind_{ZN}^{G_q}(\theta_\psi^\vee)
\simeq
\Ind_{ZN}^{G_q}((\theta^{-1})_\psi),
\]
since induction is invariant under conjugation of the inducing representation. Combining the above, we obtain
\[
\pi_\theta^\vee
=
\Ind_{ZN}^{G_q}((\theta^{-1})_\psi)
-
\Ind_E^{G_q}(\theta^{-1})
=
\pi_{\theta^{-1}}.
\]
\noindent This proves the desired isomorphism.
\end{proof}

\begin{proposition}\label{prop self dual GL2}
Let $\pi$ be an irreducible complex representation of $G_q=\GL_2(\F_q)$. Then:

\begin{enumerate}
\item If $\pi = \psi \circ \det$ is one-dimensional, then
\[
\pi^\vee \simeq \psi^{-1} \circ \det.
\]
In particular, $\pi$ is self-dual if and only if $\psi^2=1$.

\item If $\pi = \op{Ind}_B^{G_q}(\chi_1 \otimes \chi_2)$ is a principal series representation with $\chi_1 \neq \chi_2$, then
\[
\pi^\vee \simeq \op{Ind}_B^{G_q}(\chi_1^{-1} \otimes \chi_2^{-1}).
\]
Thus $\pi$ is self-dual if and only if
\[
\{\chi_1,\chi_2\} = \{\chi_1^{-1},\chi_2^{-1}\}.
\]

\item If $\pi = \St \otimes (\psi \circ \det)$ is a twist of the Steinberg representation, then
\[
\pi^\vee \simeq \St \otimes (\psi^{-1} \circ \det).
\]
In particular, $\pi$ is self-dual if and only if $\psi^2=1$.

\item If $\pi=\pi_\theta$ is a cuspidal representation associated to a character $\theta:\F_{q^2}^\times \to \mathbb{C}^\times$ with $\theta \neq \theta^q$, then
\[
\pi_\theta^\vee \simeq \pi_{\theta^{-1}}.
\]
In particular, $\pi_\theta$ is self-dual if and only if
\[
\theta^q = \theta^{-1}.
\]
\end{enumerate}
\end{proposition}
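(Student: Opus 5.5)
The plan is to treat the four families one at a time, using two inputs: the general fact that the contragredient of an induced representation is induced from the contragredient (Lemma~\ref{ind dual iso}), and the fact that characters of $G_q$ determine representations up to isomorphism. Each self-duality criterion will then follow by comparing the dual with the classification, i.e.\ by knowing exactly when two members of a family are isomorphic.

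For (1), $\pi=\psi\circ\det$ is one-dimensional, so $\chi_{\pi^\vee}(g)=\overline{\psi(\det g)}=\psi^{-1}(\det g)$. Since $\det$ is surjective onto $\F_q^\times$, we get $\psi^{-1}\circ\det=\psi\circ\det$ if and only if $\psi^{-1}=\psi$, that is, $\psi^2=1$.

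For (2), apply Lemma~\ref{ind dual iso} with $H=B$ and $\sigma=\chi_1\otimes\chi_2$ inflated to $B$. The dual of this one-dimensional character is $\chi_1^{-1}\otimes\chi_2^{-1}$, which is again trivial on $N$. This gives $\pi^\vee\simeq\Ind_B^{G_q}(\chi_1^{-1}\otimes\chi_2^{-1})$, and since $\chi_1^{-1}\neq\chi_2^{-1}$ this is again irreducible principal series. The self-duality criterion then reduces to the standard fact that $\pi(\chi_1,\chi_2)\simeq\pi(\chi_1',\chi_2')$ if and only if $\{\chi_1,\chi_2\}=\{\chi_1',\chi_2'\}$. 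I would justify this by Mackey theory, or more quickly by the character formula on the split torus: $\chi_\pi(\diag(a,d))=\chi_1(a)\chi_2(d)+\chi_1(d)\chi_2(a)$ for $a\ne d$. Linear independence of characters of $T$ then recovers the unordered pair.

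For (3), we have $(\St\otimes(\psi\circ\det))^\vee\simeq\St^\vee\otimes(\psi\circ\det)^\vee$, and by Proposition~\ref{steinberg self sual prop} together with (1) this is $\St\otimes(\psi^{-1}\circ\det)$. To get the criterion we must check that $\St\otimes\psi\simeq\St\otimes\psi'$ forces $\psi=\psi'$. I would evaluate characters on the center, where the central character of $\St\otimes(\psi\circ\det)$ is $\psi^2$; this alone does not separate the twists. Better is to use the character on a regular split element $\diag(a,1)$, $a\ne1$: there $\chi_{\St}=1$, so the character value is $\psi(a)$. These values determine $\psi$ on $\F_q^\times\setminus\{1\}$, and since $\psi(1)=1$ they determine $\psi$ completely.

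For (4), the isomorphism $\pi_\theta^\vee\simeq\pi_{\theta^{-1}}$ is Lemma~\ref{duality for cuspidal}. Then $\pi_\theta$ is self-dual if and only if $\pi_{\theta^{-1}}\simeq\pi_\theta$, which by the classification holds if and only if $\theta^{-1}\in\{\theta,\theta^q\}$. The main obstacle is excluding the case $\theta^{-1}=\theta$, i.e.\ $\theta^2=1$, for a regular $\theta$. If $\theta^2=1$, then $\theta$ takes values in $\{\pm1\}$ and factors through the quotient of the cyclic group $\F_{q^2}^\times$ of order $2$. Every element of $\F_{q^2}^\times$ satisfies $x^q\equiv x$ modulo squares, since $x^{q-1}$ is a square ($q-1$ being even). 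Hence $\theta^q=\theta$, contradicting regularity. Therefore the only possibility is $\theta^{-1}=\theta^q$, which is the stated criterion.
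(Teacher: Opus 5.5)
Your proposal is correct and follows the paper's route: Lemma~\ref{ind dual iso} for (2), self-duality of $\St$ for (3), and Lemma~\ref{duality for cuspidal} for (4). It also supplies the ``only if'' halves that the paper leaves implicit, notably the exclusion of $\theta^2=1$ for regular $\theta$ in (4). One step in (2) needs tightening: your torus formula holds only for $a\neq d$, and characters need not stay linearly independent there (for $q=3$, $\mathbf{1}\otimes\mu+\mu\otimes\mathbf{1}$ vanishes identically on that set, with $\mu$ the quadratic character), so also use the central values $(q+1)\chi_1\chi_2(z)$, which recover $\chi_1\otimes\chi_2+\chi_2\otimes\chi_1$ on all of $T$, before invoking linear independence of characters of $T$.
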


\begin{proof}
\par Part (1) is immediate. 
\par For part (2), let $\pi = \op{Ind}_B^{G_q}(\chi_1 \otimes \chi_2)$. By Lemma \ref{ind dual iso}, one has
\[
\left(\op{Ind}_B^{G_q}(\chi_1 \otimes \chi_2)\right)^\vee
\simeq
\op{Ind}_B^{G_q}(\chi_1^{-1} \otimes \chi_2^{-1})
\]
The Weyl group symmetry gives
\[
\op{Ind}_B^{G_q}(\chi_1 \otimes \chi_2)
\simeq
\op{Ind}_B^{G_q}(\chi_2 \otimes \chi_1).
\]
It follows that $\pi \simeq \pi^\vee$ if and only if the unordered pair $\{\chi_1,\chi_2\}$ is invariant under inversion, i.e.\ $\{\chi_1,\chi_2\}=\{\chi_1^{-1},\chi_2^{-1}\}$.
\par For part (3), Proposition \ref{ind dual iso} asserts that $\St^\vee \simeq \St$. Hence,
\[
(\St \otimes (\psi \circ \det))^\vee
\simeq
\St \otimes (\psi^{-1} \circ \det).
\]
\par Part (4) follows from Lemma \ref{duality for cuspidal}.
\end{proof}
\begin{lemma}\label{det pi formula}
Let $G$ be a finite group, $H \subset G$ a subgroup, and let $\sigma:H \to \mathbb{C}^\times$ be a one-dimensional representation. Let $\pi = \Ind_H^G \sigma$. Fix a set of representatives $\{x_i\}$ for $H\backslash G$. For $g\in G$, write
\[
x_i g = h_i(g)\, x_{\rho(i)}
\]
with $h_i(g)\in H$ and $\rho$ a permutation of the index set. Then
\[
\det(\pi)(g)
=
\op{sgn}(\rho)\prod_i \sigma\bigl(h_i(g)\bigr).
\]
\end{lemma}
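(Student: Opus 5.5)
The plan is to prove this transfer formula for monomial representations directly, by computing the matrix of $\pi(g)$ in a basis adapted to the coset decomposition and then taking its determinant.

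First I fix the model. I realize $\pi=\Ind_H^G\sigma$ as the space of functions $f:G\to\mathbb{C}$ with $f(hx)=\sigma(h)f(x)$, with $G$ acting by right translation as in Lemma~\ref{ind dual iso}. For each representative $x_i$ I let $f_i$ be the unique function supported on the coset $Hx_i$ with $f_i(x_i)=1$; explicitly $f_i(hx_i)=\sigma(h)$. These functions form a basis of $\pi$.

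Next I compute how $g$ acts on this basis. The function $g\cdot f_i$ is $x\mapsto f_i(xg)$, so it is supported on $Hx_ig^{-1}$. It is cleaner to work from the relation $x_jg=h_j(g)x_{\rho(j)}$. Evaluating at the representative $x_j$ gives
\[
(g\cdot f_i)(x_j)=f_i(x_jg)=f_i\bigl(h_j(g)x_{\rho(j)}\bigr)=\sigma(h_j(g))\,\delta_{i,\rho(j)}.
\]
Hence $g\cdot f_{\rho(j)}=\sigma(h_j(g))\,f_j$. So in the basis $\{f_i\}$ the operator $\pi(g)$ is a monomial matrix: it is the permutation matrix of $\rho$ (or of $\rho^{-1}$, depending on convention) with its nonzero entries rescaled by the scalars $\sigma(h_j(g))$.

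Finally I take the determinant. The determinant of a monomial matrix equals the sign of the underlying permutation times the product of its nonzero entries. Since $\op{sgn}(\rho^{-1})=\op{sgn}(\rho)$, the convention ambiguity does not matter, and I get
\[
\det\pi(g)=\op{sgn}(\rho)\prod_j\sigma(h_j(g)).
\]
The determinant is basis-independent, so this identity holds for $\pi$ itself.

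The main thing to watch is index bookkeeping. One has to check that $\rho$ is really a permutation of the index set, which holds because right multiplication by $g$ permutes $H\backslash G$. One also has to check that the scalar attached to each index is $\sigma(h_j(g))$ rather than its inverse; the evaluation at $x_j$ above settles this. Neither step is a real obstacle.
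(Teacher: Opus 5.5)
Your proof is correct and is the same argument as the paper's: in the basis of functions supported on single cosets, $\pi(g)$ is a monomial matrix whose nonzero entries are the $\sigma(h_j(g))$, so its determinant is $\op{sgn}(\rho)\prod_j\sigma(h_j(g))$. Your explicit evaluation $g\cdot f_{\rho(j)}=\sigma(h_j(g))f_j$ is more careful than the paper's displayed $g\cdot e_i=\sigma(h_i(g))e_{\rho(i)}$, which taken literally defines a right rather than a left action, but as you observe the determinant is insensitive to this.
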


\begin{proof}
Let $\{e_i\}$ be the standard basis of $\Ind_H^G \sigma$ corresponding to the cosets $Hx_i$. By definition of induction,
\[
g\cdot e_i = \sigma\bigl(h_i(g)\bigr)\, e_{\rho(i)}.
\]
Thus the matrix of $\pi(g)$ is a permutation matrix corresponding to $\rho$, with nonzero entries $\sigma(h_i(g))$ in positions $(i,\rho(i))$. The determinant of such a matrix is the sign of the permutation multiplied by the product of the nonzero entries, hence
\[
\det(\pi)(g)
=
\op{sgn}(\rho)\prod_i \sigma\bigl(h_i(g)\bigr),
\]
as claimed.
\end{proof}
Define the quadratic character $\mu:\F_q^\times \longrightarrow \{\pm1\}$ by: \[\mu(a):=
\begin{cases}
1 & \text{if } a \in (\F_q^\times)^2,\\
-1 & \text{otherwise}.
\end{cases}
\]
Extend $\mu$ to a character of $G_q=\GL_2(\F_q)$ as follows
\[
\mu(g):=\mu(\det g).
\]
\begin{lemma}\label{mu lemma}
For $g\in G_q=\GL_2(\F_q)$, let $\rho_{\mathbb{P}^1}(g)$ be the permutation of $\mathbb{P}^1(\F_q)$ induced by $g$. Then
\[
\op{sgn}(\rho_{\mathbb{P}^1}(g))=\mu(\det g).
\]
\end{lemma}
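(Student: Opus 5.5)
Both sides of the identity are group homomorphisms $G_q\to\{\pm1\}$: the right side by definition, and the left side because $g\mapsto\rho_{\mathbb{P}^1}(g)$ is a homomorphism $G_q\to\op{Sym}(\mathbb{P}^1(\F_q))$ (it is the action on lines), composed with $\op{sgn}$. As recalled in \S\ref{s 2}, every homomorphism $G_q\to\{\pm1\}$ factors through $\det$ modulo squares, and is therefore either trivial or equal to $\mu$. So I only need to evaluate both sides at one element whose determinant is a nonsquare, say $t_x=\op{diag}(x,1)$ with $x$ a generator of $\F_q^\times$. Then $\mu(\det t_x)=\mu(x)=-1$, and it remains to show that $\rho_{\mathbb{P}^1}(t_x)$ is an odd permutation.

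To compute the permutation, I will use coordinates $[a:b]$ on $\mathbb{P}^1(\F_q)$. The matrix $t_x$ sends $[a:b]\mapsto[xa:b]$, so it fixes $[1:0]$ and $[0:1]$. On the affine chart $\{[a:1]:a\in\F_q^\times\}$ it acts as multiplication by $x$. Since $x$ generates the cyclic group $\F_q^\times$, this is a single cycle of length $q-1$. Its sign is $(-1)^{q-2}=-1$, because $q$ is odd. Hence $\op{sgn}(\rho_{\mathbb{P}^1}(t_x))=-1=\mu(\det t_x)$, and since both homomorphisms are nontrivial, they agree.

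As an alternative that avoids the abelianization argument: $\op{SL}_2(\F_q)$ is perfect for $q\ge5$, but this fails for $q=3$. The cleaner route is the one above, which uses only the fact from \S\ref{s 2} that $G_q^{\mathrm{ab}}\cong\F_q^\times$ via $\det$. That fact holds for all $q$; for $q=3$ one can also check directly that $\op{SL}_2(\F_3)$ has index $2$ in $[G,G]\cdot\op{SL}_2$ considerations, but I will simply cite the stated isomorphism.

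The only real obstacle is making sure the reduction to one element is legitimate, namely that $\op{sgn}\circ\rho_{\mathbb{P}^1}$ is a homomorphism. Here I would note that $\rho_{\mathbb{P}^1}$ is the permutation action of $G_q$ on $B\backslash G_q\cong\mathbb{P}^1(\F_q)$, i.e.\ the permutation $\rho$ appearing in Lemma~\ref{det pi formula} for $H=B$. Everything else is the short cycle computation above.
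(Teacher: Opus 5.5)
Your proof is correct and follows essentially the same route as the paper. Both arguments use that any homomorphism $G_q\to\{\pm1\}$ factors through $\det$ modulo squares, and then compute the sign on a diagonal element $\op{diag}(a,1)$ from the cycle structure of multiplication by $a$ on $\F_q^\times$; your choice of a generator, which gives a single $(q-1)$-cycle, slightly streamlines the paper's computation for all $a=\gamma^m$. Your third paragraph about $q=3$ is muddled and should simply be deleted: the cited fact $G_q^{\mathrm{ab}}\cong\F_q^\times$ is all you need, and so is the more minimal fact that $\op{SL}_2(\F_q)$ has no subgroup of index $2$, which holds for every odd $q$.
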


\begin{proof}
The action of $G_q$ on $\mathbb{P}^1(\F_q)$ factors through $\op{PGL}_2(\F_q)$, since scalar matrices act trivially. Thus the map
\[
g \longmapsto \op{sgn}(\rho_{\mathbb{P}^1}(g))
\]
factors through $\op{PGL}_2(\F_q)$ and defines a group homomorphism
\[
\op{PGL}_2(\F_q)\longrightarrow \{\pm1\}.
\]

For $q$ odd, $\op{PSL}_2(\F_q)$ is a normal subgroup of index $2$ in $\op{PGL}_2(\F_q)$, and one has a canonical identification
\[
\op{PGL}_2(\F_q)/\op{PSL}_2(\F_q)\;\simeq\; \F_q^\times/(\F_q^\times)^2,
\]
via the determinant map. It follows that any homomorphism $\op{PGL}_2(\F_q)\to\{\pm1\}$ is determined by a quadratic character of $\F_q^\times$, and hence there exists a character $\chi:\F_q^\times\to\{\pm1\}$ such that
\[
\op{sgn}(\rho_{\mathbb{P}^1}(g))=\chi(\det g).
\]

It remains to identify $\chi$. For this, consider the element
\[
g=\begin{pmatrix}a & 0 \\ 0 & 1\end{pmatrix}.
\]
Then $g$ acts on $\mathbb{P}^1(\F_q)=\F_q\cup\{\infty\}$ by
\[
x\mapsto ax, \qquad \infty\mapsto\infty.
\]
Thus $\rho_{\mathbb{P}^1}(g)$ fixes $\infty$ and permutes $\F_q$ via multiplication by $a$. Since $0$ is also fixed, the sign of $\rho_{\mathbb{P}^1}(g)$ is equal to the sign of the permutation of $\F_q^\times$ given by $x\mapsto ax$.

Choose a generator $\gamma$ of the cyclic group $\F_q^\times$, so that $\F_q^\times=\{\gamma^k : 0\le k\le q-2\}$. Writing $a=\gamma^m$, the map $x\mapsto ax$ corresponds to the permutation $k\mapsto k+m$ of $\mathbb{Z}/(q-1)\mathbb{Z}$. This permutation is a cycle of length $(q-1)/\gcd(m,q-1)$ repeated $\gcd(m,q-1)$ times, and its sign is
\[
(-1)^{(q-1)-\gcd(m,q-1)}=(-1)^{\gcd(m,q-1)}=(-1)^m=\mu(a).
\]
Thus conclude that
\[
\op{sgn}(\rho_{\mathbb{P}^1}(g))=\mu(a)=\mu(\det g)
\]
for all such diagonal elements, and hence for all $g\in G_q$.
\end{proof}
\begin{proposition}
Let $G_q=\GL_2(\F_q)$ and let $\pi$ be a self-dual irreducible one-dimensional or principal series complex representation. Then the first Stiefel--Whitney class $w_1(\pi)$, identified with $\det(\pi):G_q\to\{\pm1\}$, is given as follows:
\[
\begin{cases}
\chi & \text{if } \pi=\chi\circ\det,\\
\mu\circ \det & \text{if } \pi=\Ind_B^{G_q}(\chi_1\otimes\chi_2),\\
 \mu\circ \det & \text{if } \pi=\St\otimes(\psi\circ\det).\\
\end{cases}
\]
\end{proposition}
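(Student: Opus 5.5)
The plan is to reduce every case to a computation on the single element $t_a:=\diag(a,1)$. Each $\det(\pi)$ is a homomorphism $G_q\to\{\pm1\}$, so it factors through $\det:G_q\to\F_q^\times$, as explained before \eqref{w_1 basic fact}. Hence it suffices to compute $\det(\pi)(t_a)$ for $a\in\F_q^\times$, and in fact for a single nonsquare $a$.

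\textbf{One-dimensional case.} If $\pi=\chi\circ\det$, then $\det(\pi)=\pi$. Self-duality forces $\chi^2=1$ by Proposition~\ref{prop self dual GL2}, so $w_1(\pi)$ is identified with $\chi$ itself.

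\textbf{Principal series.} Let $\pi=\Ind_B^{G_q}\sigma$ with $\sigma=\chi_1\otimes\chi_2$. I apply Lemma~\ref{det pi formula} with $H=B$, using coset representatives for $B\backslash G_q\simeq\mathbb{P}^1(\F_q)$, namely
\[
1,\qquad w=\begin{pmatrix}0&1\\1&0\end{pmatrix},\qquad u_c=\begin{pmatrix}1&0\\c&1\end{pmatrix}\ (c\in\F_q^\times).
\]
\begin{itemize}
\item By Lemma~\ref{mu lemma}, the permutation sign is $\op{sgn}(\rho)=\mu(\det g)$.
\item I then compute $\prod_i\sigma(h_i(t_a))$ orbit by orbit, i.e.\ as the transfer $G_q\to B^{\mathrm{ab}}$ composed with $\sigma$.
\item The two fixed cosets $B$ and $Bw$ contribute $\sigma(t_a)=\chi_1(a)$ and $\sigma(wt_aw^{-1})=\chi_2(a)$.
\item On an orbit of the cosets $Bu_c$ of length $L$, the contribution is $\sigma(u_c t_a^L u_c^{-1})$. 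Since $L$ is the order of $a$, we have $t_a^L=1$, so this orbit contributes $1$.
\end{itemize}
This gives the general formula
\[
\det(\pi)=\mu\cdot(\chi_1\chi_2)\quad\text{on } G_q \text{ (via }\det).
\]
Self-duality gives $\{\chi_1,\chi_2\}=\{\chi_1^{-1},\chi_2^{-1}\}$. In the case $\chi_2=\chi_1^{-1}$ we get $\chi_1\chi_2=1$, hence $\det\pi=\mu\circ\det$, as stated.

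\textbf{Steinberg twists.} Take determinants in $\Ind_B^{G_q}(\psi\otimes\psi)=(\psi\circ\det)\oplus(\St\otimes\psi\circ\det)$. The left side has determinant $\mu\psi^2=\mu$ by the previous paragraph, and $\psi^2=1$ by self-duality. This yields $\det(\St\otimes\psi\circ\det)=\mu\psi$. The same value follows from $\det(\St\otimes\psi)=\det(\St)\,\psi^{q}$ with $q$ odd, where $\det\St=\mu$ comes from the case $\psi=1$.

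\textbf{Main obstacle.} The step I expect to need the most care is reconciling the formula with the statement in the degenerate self-dual cases. If $\chi_1,\chi_2$ are distinct quadratic characters, i.e.\ $\{1,\mu\}$, the computation gives $\det\pi=\mu\cdot\mu=1$. For $\St\otimes(\mu\circ\det)$ it gives $\mu\cdot\mu=1$ rather than $\mu$. So I would first double-check the transfer computation on the nontrivial orbits, and the Lemma~\ref{mu lemma} sign, by testing $q=3$ directly. If they hold, the proposition should be read as asserting $\mu\circ\det$ for $\chi_2=\chi_1^{-1}$ and for $\psi=1$, with the general values $\mu\chi_1\chi_2$ and $\mu\psi$ recorded.
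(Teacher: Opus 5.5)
Your proposal is correct and follows the paper's route: Lemma~\ref{det pi formula} with the sign from Lemma~\ref{mu lemma}, then the Steinberg twists via $\det\St=\mu\circ\det$. However, at the one substantive step your computation is right and the paper's is not. The paper writes $\sigma(h_i(g))=\chi_1(a_i(g))\chi_2(d_i(g))$ with $a_i(g)d_i(g)=\det g$. It then replaces this by $(\chi_1\chi_2)(\det g)$ to get $(\chi_1\chi_2)(\det g)^{q+1}$, which is not valid in general. Finally it asserts that self-duality forces $\chi_1\chi_2=1$, which overlooks the self-dual pair $\{\chi_1,\chi_2\}=\{1,\mu\}$. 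Your transfer computation, $\det\Ind_B^{G_q}(\chi_1\otimes\chi_2)=(\mu\chi_1\chi_2)\circ\det$, needs no further checking. With your transversal one has $u_c t_a=t_a u_{ca}$, so each non-fixed coset contributes $\sigma(t_a)=\chi_1(a)$, and these multiply to $\chi_1(a)^{q-1}=1$.

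The obstacle you flag is genuine, so you can drop the hedge; the $q=3$ test you propose already confirms it. For any $q\equiv 3\pmod 4$, combine Lemma~\ref{lemma det involution} with the character values in Proposition~\ref{prop self dual GL2 with h2}. For $\pi=\Ind_B^{G_q}(1\otimes\mu)$ one has $\chi_\pi(h_1)=0$ and $\dim\pi=q+1$, so $\det\pi(h_1)=(-1)^{(q+1)/2}=1\neq\mu(-1)$. For $\pi=\St\otimes(\mu\circ\det)$ one has $\chi_\pi(h_1)=-1$ and $\dim\pi=q$, so again $\det\pi(h_1)=1$. The paper's own Steinberg computation in fact ends at $\mu\psi\circ\det$, contradicting its displayed $\mu\circ\det$ for $\psi\neq 1$. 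So the statement as written fails in exactly the two degenerate cases you identified. What your argument proves is $(\mu\chi_1\chi_2)\circ\det$ and $(\mu\psi)\circ\det$, which equal $\mu\circ\det$ precisely when $\chi_2=\chi_1^{-1}$, resp.\ $\psi=1$. That is the correct form of the proposition.
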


\begin{proof}
We compute $\det(\pi)$ in each case.

\medskip

\noindent
\textit{(i) One-dimensional representations.}
If $\pi=\chi\circ\det$, then $\det(\pi)=\chi\circ\det$ by definition.

\medskip

\noindent
\textit{(ii) Principal series.}
Let $\pi=\Ind_B^{G_q}(\chi_1\otimes\chi_2)$, and write $\sigma=\chi_1\otimes\chi_2$. By the determinant formula for induced representations, fixing coset representatives $\{x_i\}$ for $B\backslash G_q$, we have
\[
\det(\pi)(g)
=
\op{sgn}(\rho_g)\prod_i \sigma(h_i(g)),
\quad\text{where } x_i g = h_i(g)\,x_{\rho_g(i)}.
\]
Since $\sigma$ depends only on diagonal entries, we have
\[
\sigma(h_i(g)) = \chi_1(a_i(g))\chi_2(d_i(g)),
\]
where $a_i(g)d_i(g)=\det(g)$. Hence
\[
\prod_i \sigma(h_i(g))
=
\prod_i (\chi_1\chi_2)(\det g)
=
(\chi_1\chi_2)(\det g)^{q+1}.
\]
Since the representation is self dual, $\chi_1\chi_2=1$. Hence by Lemma \ref{mu lemma}, we find that 
\[\det(\pi)=\mu\circ \det.\]
\medskip

\noindent
\textit{(iii) Steinberg twists.}
Using the virtual identity
\[
\St = \Ind_B^{G_q}\mathbf{1} - \mathbf{1},
\]
we compute determinants multiplicatively:
\[
\det(\St)
=
\frac{\det(\Ind_B^{G_q}\mathbf{1})}{\det(\mathbf{1})}=\mu\circ \det.
\]
Now if $\pi=\St\otimes(\psi\circ\det)$, then
\[
\det(\pi)
=
\det(\St)\cdot (\psi\circ\det)^{\dim \St}
=
(\mu\circ \det) (\psi\circ\det)^q=(\mu\circ \det) (\psi\circ\det)=\mu\psi\circ \det.
\]
\end{proof}

\par Recall that $h_1=\diag(-1,1)$ and $h_2=\diag(-1,-1)=-I$. 
\begin{proposition}\label{prop self dual GL2 with h2}
Let $\pi$ be an irreducible complex representation of $G_q=\GL_2(\F_q)$. Then:

\begin{enumerate}
\item If $\pi = \psi \circ \det$ is one-dimensional and $\psi^2=1$. Then we have that
\[
\chi_\pi(h_1)=\psi(-1) \quad\text{ and }\quad \chi_\pi(h_2)=\psi(1)=1.
\]

\item If $\pi = \op{Ind}_B^{G_q}(\chi_1 \otimes \chi_2)$ is a self dual principal series representation with $\chi_1 \neq \chi_2$. Then, \[
\chi_\pi(h_1)=\chi_1(-1)+\chi_2(-1)
\quad\text{ and }\quad
\chi_\pi(h_2)=(q+1)\chi_1(-1)\chi_2(-1).
\]

\item If $\pi = \St \otimes (\psi \circ \det)$ is a twist of the Steinberg representation for which $\psi^2=1$, we have that
\[
\chi_\pi(h_1)=\psi(-1) \quad\text{and}\quad \chi_\pi(h_2)=q\cdot \psi(1)=q.
\]

\item If $\pi=\pi_\theta$ is a self-dual cuspidal representation associated to a character $\theta:\F_{q^2}^\times \to \mathbb{C}^\times$ with $\theta \neq \theta^q$. Then,
\[
\chi_{\pi_\theta}(h_1)=0
\quad\text{and}\quad
\chi_{\pi_\theta}(h_2)=(q-1)\,\theta(-1).
\]
\end{enumerate}
\end{proposition}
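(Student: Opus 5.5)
The plan is to evaluate each character directly, using the explicit model of each of the four families and the fact that $h_1$ and $h_2$ are diagonal. Here $h_2=-I$ is central, and $h_1=\diag(-1,1)$ is a split semisimple element with distinct eigenvalues.

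\emph{One-dimensional and central cases.} For (1), we have $\chi_\pi(g)=\psi(\det g)$, and $\det h_1=-1$, $\det h_2=1$, which gives the formulas immediately. For every $h_2$ value, I would use Schur's lemma: $h_2=-I$ is central, so it acts on an irreducible $\pi$ by its central character $\omega_\pi(-1)$, and $\chi_\pi(h_2)=\dim\pi\cdot\omega_\pi(-1)$. The central characters are as follows.
\begin{itemize}
\item For the principal series, $\omega=\chi_1\chi_2$, which gives $(q+1)\chi_1(-1)\chi_2(-1)$.
\item For the Steinberg twist, $\omega=\psi^2=1$, which gives $q$.
\item For the cuspidal $\pi_\theta$, $\omega=\theta|_{\F_q^\times}$ (read off from the $\Ind_{ZN}$ term, on which $Z$ acts by $\theta$), which gives $(q-1)\theta(-1)$.
\end{itemize}

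\emph{Value at $h_1$ for induced families.} For (2), I would use the Frobenius formula for $\Ind_B^{G_q}\chi$. The value at $h_1$ is the sum, over cosets $Bx$ fixed by $h_1$ (equivalently, fixed points of $h_1$ on $\mathbb{P}^1(\F_q)$), of $\chi(xh_1x^{-1})$. Since $h_1$ acts on $\mathbb{P}^1$ by $z\mapsto -z$, its only fixed points are $0$ and $\infty$. At these two points the conjugates are $\diag(-1,1)$ and $\diag(1,-1)$, contributing $\chi_1(-1)$ and $\chi_2(-1)$ respectively. For (3), I would apply the same computation with $\chi_1=\chi_2=\psi$, which gives $\Ind_B^{G_q}(\psi\otimes\psi)(h_1)=2\psi(-1)$. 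Subtracting the one-dimensional constituent $\psi(\det h_1)=\psi(-1)$ leaves $\psi(-1)$. Here I use the decomposition $\Ind_B^{G_q}(\psi\otimes\psi)=(\psi\circ\det)\oplus(\St\otimes\psi\circ\det)$ recalled above.

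\emph{Cuspidal value at $h_1$.} For (4), the claim $\chi_{\pi_\theta}(h_1)=0$ is the main point, and I would argue it through the virtual-character formula. The term $\Ind_{E}^{G_q}\theta$ with $E=\F_{q^2}^\times$ vanishes at $h_1$: $h_1$ is split semisimple with distinct eigenvalues, so it has no conjugate in the elliptic torus $E$. A non-scalar element of $E$ has irreducible characteristic polynomial, whereas $h_1$ has eigenvalues $\pm1\in\F_q$. For $\Ind_{ZN}^{G_q}(\theta\otimes\psi)$, I would argue that no conjugate of $h_1$ lies in $ZN$. Every element of $ZN$ is of the form $z\cdot n$, so it has a repeated eigenvalue, while $h_1$ has distinct eigenvalues $-1\neq1$ since $q$ is odd. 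Hence both induced characters vanish at $h_1$, and so does their difference.

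As a cross-check, I would compare with the standard character table of $\GL_2(\F_q)$ in Bushnell--Henniart, where cuspidal characters vanish on split non-central semisimple classes. The only real obstacle is making sure the virtual-induction model has the correct sign and normalization. Since the vanishing holds term by term, that normalization does not affect the $h_1$ value, and the $h_2$ value depends only on the central character and the degree $q-1$.
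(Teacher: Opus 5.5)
Your proposal is correct and takes essentially the same route as the paper. For $h_1$ you use the Frobenius formula with the two fixed points of $h_1$ on $B\backslash G_q$, extended to the Steinberg twists via $\Ind_B^{G_q}(\psi\otimes\psi)=(\psi\circ\det)\oplus(\St\otimes(\psi\circ\det))$ (the paper only says this case follows ``along the same lines''), and for the cuspidal case you use term-by-term vanishing since no conjugate of $h_1$ lies in $ZN$ or $E$. The only difference is cosmetic: at the central element $h_2=-I$ you use Schur's lemma and central characters, whereas the paper evaluates each induced character as $[G_q:H]\,\sigma(-I)$, which is the same computation.
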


\begin{proof}
We treat each case separately.

\par

\noindent
(1) Since $\det(h_1)=-1$ and $\det(h_2)=1$, we obtain
\[
\chi_\pi(h_1)=\psi(-1) \quad \text{and}\quad\chi_\pi(h_2)=\psi(1)=1.
\]

\par

\noindent
(2) We use the standard formula for the character of an induced representation:
\[
\chi_{\Ind_B^{G_q}\sigma}(g)
=
\frac{1}{|B|}
\sum_{\substack{x\in G_q \\ x^{-1}gx \in B}}
\chi_\sigma(x^{-1}gx),
\]
where $\sigma=\chi_1\otimes\chi_2$. The element $h_1$ is semisimple with distinct eigenvalues. There are exactly two such $B$-conjugacy representatives, namely
\[
\begin{pmatrix}-1 & 0 \\ 0 & 1\end{pmatrix}
\quad \text{and} \quad
\begin{pmatrix}1 & 0 \\ 0 & -1\end{pmatrix}.
\]Thus
\[
\chi_\pi(h_1)
=
\chi_\sigma\!\left(\begin{pmatrix}-1 & 0 \\ 0 & 1\end{pmatrix}\right)
+
\chi_\sigma\!\left(\begin{pmatrix}1 & 0 \\ 0 & -1\end{pmatrix}\right),
\]
and we obtain
\[
\chi_\pi(h_1)
=
\chi_1(-1)\chi_2(1) + \chi_1(1)\chi_2(-1)
=
\chi_1(-1)+\chi_2(-1).
\]
For $h_2=-I$, which is central in $G_q$, we have $x^{-1}h_2x=h_2$ for all $x\in G_q$, and hence
\[
\chi_\pi(h_2)
=
\frac{1}{|B|}
\sum_{x\in G_q} \chi_\sigma(h_2)
=
\frac{|G_q|}{|B|}\,\chi_\sigma(h_2).
\]
Since $\dim(\pi)=[G_q:B]=q+1$, it follows that
\[
\chi_\pi(h_2)
=
(q+1)\chi_\sigma(-I)
=
(q+1)\chi_1(-1)\chi_2(-1).
\]

\par

\noindent
(3) The computation for Steinberg representations follows along the same lines as part (2).
\par

\noindent
(4) We use the virtual representation
\[
\pi_\theta = \Ind_{ZN}^{G_q}(\theta_\psi) - \Ind_E^{G_q}(\theta),
\]
and compute characters using the standard formula for induced representations:
\[
\chi_{\Ind_H^{G_q}\sigma}(g)
=
\frac{1}{|H|}
\sum_{\substack{x\in G_q \\ x^{-1}gx \in H}}
\chi_\sigma(x^{-1}gx).
\]
The element $h_1$ is split semisimple with distinct eigenvalues. We claim that neither $ZN$ nor $E$ contains any conjugate of $h_1$. First, every element of $ZN$ is unipotent up to scalar, hence has a single eigenvalue; thus no conjugate of $h_1$ lies in $ZN$. Therefore
\[
\chi_{\Ind_{ZN}^{G_q}(\theta_\psi)}(h_1)=0.
\]
Diagonalizable elements in $E\simeq \F_{q^2}^\times$ are the elements in $\F_q^\times$. Thus the only diagonalizable elements in $E$ are scalar. Since $h_1$ is non-scalar and diagonalizable over $\F_q$, it is not conjugate to any element of $E$. Hence, $\chi_{\Ind_E^{G_q}(\theta)}(h_1)=0$ and we deduce that
\[
\chi_{\pi_\theta}(h_1)=0.
\]
\noindent Since $h_2=-I$ is central, the induced character formula simplifies to
\[
\chi_{\Ind_H^{G_q}\sigma}(h_2)
=
\frac{|G_q|}{|H|}\,\chi_\sigma(h_2)
\]
provided $h_2\in H$, and is $0$ otherwise. First, $h_2\in Z\subset ZN$, so
\[
\chi_{\Ind_{ZN}^{G_q}(\theta_\psi)}(h_2)
=
\frac{|G_q|}{|ZN|}\,\theta_\psi(h_2).
\]
Since $\theta_\psi(zn)=\theta(z)\psi(n)$ and $h_2=-I\in Z$, we have
\[
\theta_\psi(h_2)=\theta(-1).
\]
A direct computation shows
\[
\frac{|G_q|}{|ZN|} = q^2-1,
\]
so
\[
\chi_{\Ind_{ZN}^{G_q}(\theta_\psi)}(h_2)=(q^2-1)\theta(-1).
\]Next, $h_2\in E$, so similarly
\[
\chi_{\Ind_E^{G_q}(\theta)}(h_2)
=
\frac{|G_q|}{|E|}\,\theta(-1).
\]
Since $|E|=q^2-1$, we have
\[
\frac{|G_q|}{|E|} = q(q-1),
\]
and hence
\[
\chi_{\Ind_E^{G_q}(\theta)}(h_2)=q(q-1)\theta(-1).
\]Therefore
\[
\chi_{\pi_\theta}(h_2)
=
(q^2-1)\theta(-1) - q(q-1)\theta(-1)
=
(q-1)\theta(-1).
\]
\end{proof}

\begin{remark}
For any irreducible character $\chi$ of $G_q$, one has
\[
|\chi(h_1)| \le 2.
\]
In particular, if $r \ge 2$, then
\[
2^r \mid \chi(h_1) \quad \Longleftrightarrow \quad \chi(h_1)=0,
\]
since the only integer divisible by $2^r$ with absolute value at most $2$ is $0$.
\end{remark}

\begin{proposition}\label{prop SW classes GL2 q=1mod4 clean}
Let $q\equiv 1 \pmod{4}$, let $G_q=\GL_2(\F_q)$, and let $\pi$ be a self-dual irreducible complex representation of $G_q$. Then the following hold.

\begin{enumerate}

\item If $\pi$ is one-dimensional, then
\[
w_2(\pi)=w_4(\pi)=0.
\]

\item Suppose $\dim \pi>1$.

\begin{enumerate}

\item[(a)] If $\pi=\Ind_B^{G_q}(\chi_1\otimes \chi_1^{-1})$, then
\[
w_2(\pi)=0
\iff
\begin{cases}
q\equiv 1 \pmod{8}, & \text{if } \chi_1(-1)=1,\\[6pt]
q\equiv 5 \pmod{8}, & \text{if } \chi_1(-1)=-1.
\end{cases}
\]

\item[(b)]
If $\pi=\St\otimes(\psi\circ\det)$ with $\psi^2=1$, then
\[
w_2(\pi)=0 \iff q\equiv 1 \pmod{8}.
\]

\item[(c)]  
If $\pi=\pi_\theta$ is cuspidal and self-dual, then
\[
w_2(\pi)=0 \iff q\equiv 1 \pmod{8}.
\]

\end{enumerate}

\item For $\pi$ with $\dim \pi>1$, in the principal series case $\pi=\Ind_B^{G_q}(\chi_1\otimes\chi_1^{-1})$,
\[
w_4(\pi)=0
\iff
\begin{cases}
q \equiv 1 \pmod{16}, & \text{if } \chi_1(-1)=1,\\[6pt]
q \equiv 13 \pmod{16}, & \text{if } \chi_1(-1)=-1,
\end{cases}
\]
while for Steinberg twists and cuspidal representations one always has
\[
w_4(\pi)=0 \iff q \equiv 1 \pmod{16}.
\]

\end{enumerate}
\end{proposition}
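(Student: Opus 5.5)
The plan is to reduce everything to Theorem~\ref{theorem about w_2 and w_4}(1) and then feed in the character values of Proposition~\ref{prop self dual GL2 with h2}.

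\textbf{Step 1: the cohomological criterion.} Since $q\equiv 1\pmod 4$, the restriction $H^*(G_q;\Z/2\Z)\to H^*(D;\Z/2\Z)$ is injective (\cite[Theorem 5]{GangulyJoshiTotalSWCs}). Here $D\simeq C_{q-1}^2$ with $4\mid q-1$. In $H^*(D;\Z/2\Z)$ the classes $t_1,t_2$ are the polynomial (Bockstein) generators. Hence $t_1+t_2\neq 0$ in degree $2$, and $t_1^2+t_2^2$ and $t_1t_2$ are linearly independent in degree $4$. Consequently:
\begin{itemize}
\item $w_2(\pi)=0$ if and only if $m_\pi/2$ is even, that is, $\dim\pi-\chi_\pi(h_1)\equiv 0\pmod 8$;
\item $w_4(\pi)=0$ if and only if both $\binom{m_\pi/2}{2}$ and $(\dim\pi-\chi_\pi(h_2))/8$ are even.
\end{itemize}
This first step is the only place where structural input is needed. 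I would justify it by writing $H^*(C_{q-1};\F_2)=\Lambda(v)\otimes\F_2[t]$ and applying K\"unneth.

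\textbf{Step 2: case-by-case computation.} I go through the four families using Proposition~\ref{prop self dual GL2 with h2}, always with $\chi_2=\chi_1^{-1}$ for the self-dual principal series.
\begin{itemize}
\item \emph{One-dimensional.} Since $-1$ is a square in $\F_q$, we have $\psi(-1)=1$. Then $m_\pi=0$ and $\dim\pi-\chi_\pi(h_2)=0$, so every class vanishes.
\item \emph{Steinberg twists.} Here $\dim\pi=q$ and $\chi_\pi(h_1)=\psi(-1)=1$, so $m_\pi/2=(q-1)/4$. Also $\dim\pi-\chi_\pi(h_2)=0$.
\item \emph{Cuspidal.} Here $\dim\pi=q-1$ and $\chi_\pi(h_1)=0$, so again $m_\pi/2=(q-1)/4$. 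To get $\chi_\pi(h_2)=q-1$ I must check $\theta(-1)=1$. Self-duality gives $\theta^{q+1}=1$. Writing $-1=\gamma^{(q^2-1)/2}$ for a generator $\gamma$ of $\F_{q^2}^\times$, we get $\theta(-1)=\bigl(\theta(\gamma)^{q+1}\bigr)^{(q-1)/2}=1$.
\item \emph{Principal series.} Here $\chi_\pi(h_1)=2\chi_1(-1)$ and $\chi_\pi(h_2)=q+1$, so the $t_1t_2$ coefficient is $0$. The remaining quantity $m_\pi/2$ is $(q-1)/4$ if $\chi_1(-1)=1$ and $(q+3)/4$ if $\chi_1(-1)=-1$.
\end{itemize}
Part (2) then follows by reading off the parity of $m_\pi/2$ modulo $8$.

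\textbf{Step 3: the expected obstacle, $w_4$.} Since the $t_1t_2$ coefficient always vanishes, $w_4$ depends only on the parity of $\binom{m_\pi/2}{2}$. This is even exactly when $m_\pi/2\equiv 0,1\pmod 4$, which converts into congruences for $q$ modulo $16$.

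This is where I expect trouble. Taken literally, $m_\pi/2=(q-1)/4\equiv 1\pmod 4$, i.e.\ $q\equiv 5\pmod{16}$, also makes $\binom{m_\pi/2}{2}$ even. That is more permissive than the stated condition $q\equiv 1\pmod{16}$, and a similar discrepancy arises in the $\chi_1(-1)=-1$ principal-series case.

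So the first thing I would do is recheck the normalization of the $t_i^2$ coefficient in \cite{GangulyJoshiTotalSWCs}. In particular, I would check whether the relevant quantity is $\binom{m_\pi}{4}$ or involves $w_2^2$, which would give a condition on $m_\pi/2$ modulo $8$. If the paper's formula is to be used as stated, the stated congruences must be reconciled against it, and I would flag the case $q\equiv 5\pmod{16}$ explicitly.
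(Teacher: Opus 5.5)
Your approach is the paper's: injectivity of restriction to $D$, Theorem~\ref{theorem about w_2 and w_4}(1), and the character values of Proposition~\ref{prop self dual GL2 with h2}. Parts (1) and (2) come out exactly as you say. Your Steinberg value $\chi_\pi(h_1)=\psi(-1)=1$ is the correct one; the paper's proof writes $\chi_\pi(h_1)=0$, which is inconsistent with its own $m_\pi=(q-1)/2$. Your argument for $\theta(-1)=1$ is also fine. The same computation shows $\theta|_{\F_q^\times}=1$, without using $q\equiv 1\pmod 4$.

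On part (3), drop the hedge. The discrepancy is genuine, and it is the statement that is wrong, not your computation.

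\textbf{The normalization is correct.} The $v$-free part of $w(\pi|_D)$ is $\prod(1+\sum_{i\in T}t_i)$. There is one factor for each conjugate pair of non-real characters $\chi^{j_1}\boxtimes\chi^{j_2}$ in $\pi|_D$, with $T=\{i: j_i\text{ odd}\}$. So the $t_1^2$-coefficient is $\binom{N}{2}$, where $N$ is the number of pairs with $j_1$ odd. Moreover $4N=\dim\pi-\chi_\pi(h_1)$, because real characters of $D$ are trivial at $h_1$ when $4\mid q-1$. Also $\binom{m}{4}\equiv\binom{m/2}{2}\pmod 2$ for even $m$, so the alternative normalization you mention would change nothing.

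\textbf{Where the paper's proof breaks.} It notes that $\binom{m_\pi/2}{2}$ is even iff $m_\pi\equiv 0,2\pmod 8$. It then silently keeps only $m_\pi\equiv 0\pmod 8$.

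\textbf{Counterexamples.} At $q=5$, a self-dual cuspidal $\pi_\theta$ satisfies $\pi_\theta|_D\cong\bigoplus_{j=0}^{3}\chi^j\boxtimes\chi^{-j}$, with $\chi$ faithful on $\F_5^\times$. Hence $w(\pi_\theta|_D)=(1+v_1+v_2)(1+t_1+t_2)$ and $w_4(\pi_\theta)=0$, although $5\not\equiv 1\pmod{16}$. The Steinberg twists at $q=5$ behave the same way. At $q=17$ with $\chi_1(-1)=-1$, one gets $m_\pi/2=5$, and $\binom{5}{2}$ is even, although $17\not\equiv 13\pmod{16}$.

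\textbf{What you actually prove.} Your Steps 1--3 establish the corrected statement:
\begin{itemize}
\item when $m_\pi=(q-1)/2$ (principal series with $\chi_1(-1)=1$, Steinberg twists, cuspidals), $w_4(\pi)=0$ iff $q\equiv 1,5\pmod{16}$;
\item when $\chi_1(-1)=-1$, $w_4(\pi)=0$ iff $q\equiv 1,13\pmod{16}$.
\end{itemize}
Part (3) as written is false and cannot be proved.
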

\begin{proof}
Let $h_1=\diag(-1,1)$ and $h_2=-I$. For any representation $\pi$, recall that
\[
m_\pi = \frac{\dim \pi - \chi_\pi(h_1)}{2}.
\]
By Theorem~\ref{theorem about w_2 and w_4}, we have
\[
w_2(\pi)=0 \iff m_\pi \equiv 0 \pmod{4},
\]
and
\[
w_4(\pi)|_D
=
\binom{m_\pi/2}{2}\sum_i t_i^2
+
\frac{\dim \pi - \chi_\pi(h_2)}{8}
\sum_{i<j} t_i t_j.
\]
Thus $w_4(\pi)=0$ if and only if both coefficients vanish modulo $2$.

\medskip

\noindent
\textbf{Case 1: $\pi$ one-dimensional.}  
If $\pi=\psi\circ\det$ with $\psi^2=1$, then $\psi(-1)=1$ since $-1$ is a square in $\F_q^\times$ (as $q\equiv 1 \pmod{4}$). Hence $\chi_\pi(h_1)=1$, $m_\pi=0$, so $w_2(\pi)=0$. Also $\chi_\pi(h_2)=1=\dim\pi$, so $w_4(\pi)=0$.

\medskip

\noindent
\textbf{Case 2: $\dim \pi>1$.}

\smallskip

\noindent
\textit{(a) Principal series.}  
Let $\pi=\Ind_B^{G_q}(\chi_1\otimes\chi_1^{-1})$. Then
\[
\chi_\pi(h_1)=\chi_1(-1)+\chi_1(-1)^{-1}.
\]

If $\chi_1(-1)=1$, then $\chi_\pi(h_1)=2$, and hence
\[
m_\pi=\frac{(q+1)-2}{2}=\frac{q-1}{2}.
\]

If $\chi_1(-1)=-1$, then $\chi_\pi(h_1)=-2$, and hence
\[
m_\pi=\frac{(q+1)-(-2)}{2}=\frac{q+3}{2}.
\]

Therefore
\[
w_2(\pi)=0
\iff
\begin{cases}
\frac{q-1}{2}\equiv 0 \pmod{4}, & \text{if } \chi_1(-1)=1,\\[6pt]
\frac{q+3}{2}\equiv 0 \pmod{4}, & \text{if } \chi_1(-1)=-1,
\end{cases}
\]
which is equivalent to
\[
w_2(\pi)=0
\iff
\begin{cases}
q\equiv 1 \pmod{8}, & \text{if } \chi_1(-1)=1,\\[6pt]
q\equiv 5 \pmod{8}, & \text{if } \chi_1(-1)=-1.
\end{cases}
\]

\smallskip

\noindent
\textit{(b) Steinberg twists.}  
Let $\pi=\St\otimes(\psi\circ\det)$ with $\psi^2=1$. Then $\psi(-1)=1$, and one computes
\[
\chi_\pi(h_1)=0,\qquad \dim \pi=q,
\]
so
\[
m_\pi=\frac{q-1}{2}.
\]
Thus
\[
w_2(\pi)=0 \iff q\equiv 1 \pmod{8}.
\]

\smallskip

\noindent
\textit{(c) Cuspidal representations.}  
Let $\pi=\pi_\theta$ be cuspidal and self-dual. Then $\chi_\pi(h_1)=0$ and $\dim \pi=q-1$, so
\[
m_\pi=\frac{q-1}{2},
\]
and hence
\[
w_2(\pi)=0 \iff q\equiv 1 \pmod{8}.
\]
\par We now analyze the two coefficients in the expression for $w_4(\pi)$. By Theorem~\ref{theorem about w_2 and w_4}, the class $w_4(\pi)$ vanishes if and only if both
\[
\binom{m_\pi/2}{2}
\quad \text{and} \quad
\frac{\dim \pi - \chi_\pi(h_2)}{8}
\]
are zero modulo $2$.

We first consider the binomial coefficient. A direct congruence calculation shows that
\[
\binom{m_\pi/2}{2}\equiv 0 \pmod{2}
\iff m_\pi \equiv 0 , 2\pmod{8}.
\]
From the case-by-case analysis above, one has $m_\pi \in \{(q-1)/2,\,(q+3)/2\}$. Checking these two possibilities separately and noting that $q\equiv 1\pmod{4}$, we find
\[
m_\pi \equiv 0 \pmod{8}
\iff
\begin{cases}
q \equiv 1 \pmod{16}, & \text{if } m_\pi=\frac{q-1}{2},\\[6pt]
q \equiv 13 \pmod{16}, & \text{if } m_\pi=\frac{q+3}{2}.
\end{cases}
\]

We now turn to the second coefficient. For principal series and Steinberg representations, one has \[\chi_\pi(h_2)=\dim \pi,\] so this term vanishes. For cuspidal representations $\pi=\pi_\theta$, one has \[\chi_\pi(h_2)=(q-1)\theta(-1).\] Since $\pi$ is self-dual, the parameter $\theta$ satisfies $\theta^q=\theta^{-1}$, and hence $\theta^{q+1}=1$. Restricting to $\F_q^\times$, it follows that $\theta(a)^2=1$ for all $a\in \F_q^\times$, so $\theta|_{\F_q^\times}$ is quadratic. As $q\equiv 1 \pmod{4}$, the element $-1$ is a square in $\F_q^\times$, and therefore $\theta(-1)=1$. It follows that \[\chi_\pi(h_2)=q-1=\dim \pi,\] and hence the second coefficient also vanishes.

Combining these two observations, we conclude that $w_4(\pi)=0$ if and only if $m_\pi \equiv 0 \pmod{8}$, which yields the stated congruence conditions on $q$.

\end{proof}

\begin{proposition}\label{prop w2 GL2 q=3mod4}
Let $q\equiv 3 \pmod{4}$, let $G_q=\GL_2(\F_q)$, and let $\pi$ be a self-dual irreducible complex representation of $G_q$.

\begin{enumerate}
\item If $\pi$ is one-dimensional, then $w_2(\pi)= 0$.

\item Suppose that $\dim \pi>1$. Then
\[
w_2(\pi)=0
\]
if and only if either:
\begin{enumerate}
    \item[(i)] $\pi$ is a principal series representation and if\begin{itemize}
    \item[(a)] $q\equiv 3\pmod{8}$ and $\chi_1(-1)=1$,
    \item[(b)] $q\equiv 7\pmod{8}$ and $\chi_1(-1)=-1$.
\end{itemize}
\item[(ii)]$\pi=\St\otimes(\psi\circ\det)$ is a Steinberg twist with $\psi^2=1$, $\psi\neq 1$, and $q\equiv 7 \pmod{8}$.
\end{enumerate}
In all other cases, one has $w_2(\pi)\neq 0$.
\end{enumerate}
\end{proposition}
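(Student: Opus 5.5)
The plan is to reduce everything to a congruence on $m_\pi$ via Theorem~\ref{theorem about w_2 and w_4}(2). Since $q\equiv 3\pmod 4$, $C_2^2=\{\diag(\pm1,\pm1)\}$ is a Sylow $2$-subgroup of $D$, and restriction $H^*(G_q;\Z/2\Z)\to H^*(C_2^2;\Z/2\Z)$ is injective. Moreover $v_1^2+v_2^2\neq 0$ in $H^2(C_2^2;\Z/2\Z)=\F_2[v_1,v_2]_2$. Hence
\[
w_2(\pi)=0\iff \binom{m_\pi}{2}\equiv 0\pmod 2\iff m_\pi\equiv 0,1\pmod 4,
\qquad m_\pi=\frac{\dim\pi-\chi_\pi(h_1)}{2}.
\]
So the proof consists of computing $m_\pi \bmod 4$ for each family in Proposition~\ref{prop self dual GL2}, using the values $\chi_\pi(h_1)$ from Proposition~\ref{prop self dual GL2 with h2}, and then splitting according to $q\equiv 3$ or $7\pmod 8$. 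Throughout I use that $-1$ is a nonsquare, so $\mu(-1)=-1$, and a quadratic $\psi$ has $\psi(-1)=1$ if $\psi=1$ and $\psi(-1)=-1$ if $\psi=\mu$.

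\emph{One-dimensional case.} Here $m_\pi=(1-\psi(-1))/2\in\{0,1\}$, so $\binom{m_\pi}{2}=0$.

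\emph{Principal series with $\chi_2=\chi_1^{-1}$.} Here $\chi_\pi(h_1)=2\chi_1(-1)$ and $\dim\pi=q+1$.
\begin{itemize}
\item If $\chi_1(-1)=1$, then $m_\pi=(q-1)/2$. This is $\equiv 1\pmod 4$ for $q\equiv 3\pmod 8$ and $\equiv 3\pmod 4$ for $q\equiv 7\pmod 8$.
\item If $\chi_1(-1)=-1$, then $m_\pi=(q+3)/2$. This is $\equiv 3\pmod 4$ for $q\equiv 3\pmod 8$ and $\equiv 1\pmod 4$ for $q\equiv 7\pmod 8$.
\end{itemize}
This gives exactly conditions (a) and (b). I must also treat the other self-dual unordered pair allowed by Proposition~\ref{prop self dual GL2}(2), namely $\{\chi_1,\chi_2\}=\{1,\mu\}$. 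For it $\chi_\pi(h_1)=1+\mu(-1)=0$, so $m_\pi=(q+1)/2$, which is $\equiv 2\pmod 4$ for $q\equiv 3\pmod 8$ and $\equiv 0\pmod 4$ for $q\equiv 7\pmod 8$. Note that $\chi_1(-1)$ is not well defined for this pair, so neither (a) nor (b) applies to it as stated.

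\emph{Steinberg twists and cuspidal representations.} For $\St\otimes(\psi\circ\det)$ we have $\dim\pi=q$ and $\chi_\pi(h_1)=\psi(-1)$. Thus $m_\pi=(q-1)/2$ when $\psi=1$, and $m_\pi=(q+1)/2$ when $\psi=\mu$. For cuspidal $\pi_\theta$ we have $\chi_\pi(h_1)=0$ and $\dim\pi=q-1$, so $m_\pi=(q-1)/2$. Each of these is then reduced modulo $4$ in the two residue classes of $q$ modulo $8$.

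The main obstacle is bookkeeping, not conceptual. Carrying the arithmetic through, several of these residues give $m_\pi\equiv 0,1\pmod 4$, hence $w_2(\pi)=0$, in cases the statement lists as nonvanishing:
\begin{itemize}
\item the pair $\{1,\mu\}$ with $q\equiv 7\pmod 8$;
\item $\psi=1$ Steinberg with $q\equiv 3\pmod 8$, where $m_\pi\equiv 1$;
\item cuspidal with $q\equiv 3\pmod 8$, where $m_\pi\equiv 1$.
\end{itemize}
So the statement appears to omit cases, and its "in all other cases" clause cannot be proved without re-checking. Before writing the final case list I would first recheck the value $\chi_{\St}(h_1)=1$ from $\St=\Ind_B^{G_q}\mathbf 1-\mathbf 1$, and the cuspidal value $\chi_{\pi_\theta}(h_1)=0$. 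I would then adjust the enumerated cases (or the statement) to whatever the $m_\pi\bmod 4$ computation actually yields.
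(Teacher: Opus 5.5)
\textbf{Verdict.} Your reduction is the paper's own route, and your residue computations are all correct. The mismatches you found are errors in the paper's statement and proof, not in your bookkeeping. The route is: restrict to $C_2^2$, use Theorem~\ref{theorem about w_2 and w_4}(2) to get $w_2(\pi)=0\iff m_\pi\equiv 0,1\pmod 4$, then run through the self-dual families with Proposition~\ref{prop self dual GL2 with h2}. The two values you wanted to recheck are right:
\begin{itemize}
\item $\chi_{\St}(h_1)=2-1=1$, since $h_1$ has two fixed points on $\mathbb{P}^1(\F_q)$;
\item $\chi_{\pi_\theta}(h_1)=0$.
\end{itemize}

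\textbf{Where the paper's proof fails.}
\begin{itemize}
\item It opens by asserting that every quadratic character has $\chi(-1)=-1$. This fails for the trivial character. Hence it writes $m_\pi=\frac{q-\psi(-1)}{2}=\frac{q+1}{2}$ for Steinberg twists.
\item It then records $\frac{q-1}{2}\equiv 1\pmod 4$ for $\psi=\mathbf 1$, $q\equiv 3\pmod 8$, yet concludes $w_2(\pi)\neq0$. This contradicts the criterion it stated at the start.
\item In the cuspidal case it computes $m_\pi\equiv 1\pmod 4$ for $q\equiv 3\pmod 8$ and again concludes $w_2(\pi)\neq 0$.
\item The principal-series case tacitly takes $\chi_2=\chi_1^{-1}$. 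It never treats the self-dual representation $\Ind_B^{G_q}(\mathbf 1\otimes\mu)$ allowed by Proposition~\ref{prop self dual GL2}(2). For that representation, as you say, "$\chi_1(-1)$" is ambiguous; reading $\chi_1=\mathbf 1$ makes condition (a) give the wrong answer when $q\equiv 3\pmod 8$.
\item Its closing sentence says $w_2(\pi)=0$ occurs only for Steinberg twists, which contradicts its own principal-series analysis.
\end{itemize}

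\textbf{What your computation actually proves.} The statement as written cannot be proved. The corrected version, for $\dim\pi>1$, is:
\begin{itemize}
\item If $q\equiv 3\pmod 8$: $w_2(\pi)=0$ exactly for $\Ind_B^{G_q}(\chi\otimes\chi^{-1})$ with $\chi^2\neq\mathbf 1$ and $\chi(-1)=1$, for $\St$, and for every self-dual cuspidal $\pi_\theta$.
\item If $q\equiv 7\pmod 8$: $w_2(\pi)=0$ exactly for $\Ind_B^{G_q}(\chi\otimes\chi^{-1})$ with $\chi^2\neq\mathbf 1$ and $\chi(-1)=-1$, for $\Ind_B^{G_q}(\mathbf 1\otimes\mu)$, and for $\St\otimes(\mu\circ\det)$.
\end{itemize}

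\textbf{Independent check at $q=3$.} This check does not use the cited Stiefel--Whitney formula. The unique self-dual cuspidal representation of $\GL_2(\F_3)$ is inflated from the $2$-dimensional representation of $S_3$ via $\GL_2(\F_3)\to\op{PGL}_2(\F_3)\cong S_4\to S_3$. Now $H^*(S_3;\Z/2\Z)$ is detected on the Sylow subgroup $\langle(1\,2)\rangle$, where this representation is $\mathbf 1\oplus\op{sgn}$. So its $w_2$ vanishes, and the statement's claim that cuspidals have $w_2\neq 0$ already fails at $q=3$.

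\textbf{Knock-on effect.} These corrections propagate to Theorem~\ref{main thm 2 corrected}. For example, when $q\equiv 3\pmod 8$ all $\frac{q-1}{2}$ self-dual cuspidals have $w_2=0$, so the limiting proportion in that residue class is $\frac34$, not $\frac14$.
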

\begin{proof}
Let $h_1=\diag(-1,1)$. Recall that
\[
m_\pi=\frac{\dim \pi - \chi_\pi(h_1)}{2},
\quad
w_2(\pi)=0 \iff m_\pi \equiv 0,1 \pmod{4}.
\]

Since $q\equiv 3 \pmod{4}$, the element $-1$ is not a square in $\F_q^\times$. Hence for any quadratic character $\chi$ of $\F_q^\times$, one has $\chi(-1)=-1$. We compute $m_\pi$ using Proposition~\ref{prop self dual GL2 with h2}.
\medskip

\noindent
\textit{(i) One-dimensional representations.}
If $\pi=\psi\circ\det$ with $\psi\neq 1$ and $\psi^2=1$, then $\psi(-1)=-1$, so
\[
m_\pi=\frac{1-(-1)}{2}=1.
\]
Thus $w_2(\pi)= 0$.

\medskip

\noindent
\textit{(ii) Principal series.}
We have
\[
m_\pi=
\begin{cases}
\frac{q-1}{2} & \text{if } \chi_1(-1)=1,\\[6pt]
\frac{q+3}{2} & \text{if } \chi_1(-1)=-1.
\end{cases}
\]
If $q\equiv 3 \pmod{8}$, then
\[
\frac{q-1}{2}\equiv 1 \pmod{4}
\quad\text{and}\quad
\frac{q+3}{2}\equiv 3 \pmod{4}.
\]
Therefore if $q\equiv 3\pmod{8}$, we have that $w_2(\pi)=0$ if and only if $\chi_1(-1)=1$.
\par On the other hand, if $q\equiv 7 \pmod{8}$, then
\[
\frac{q-1}{2}\equiv 3 \pmod{4},
\qquad
\frac{q+3}{2}\equiv 1 \pmod{4},
\]
so again $w_2(\pi)=0$. if and only if $\chi_1(-1)=-1$. Thus for principal series, one has $w_2(\pi)=0$ if and only if:
\begin{itemize}
    \item[(a)] $q\equiv 3\pmod{8}$ and $\chi_1(-1)=1$,
    \item[(b)] $q\equiv 7\pmod{8}$ and $\chi_1(-1)=-1$.
\end{itemize}

\medskip

\noindent
\textit{(iii) Steinberg twists.}
Here
\[
m_\pi=\frac{q-\psi(-1)}{2}=\frac{q+1}{2}.
\]
If $\psi(-1)=1$, then $m_\pi=\frac{q-1}{2}$; if $\psi(-1)=-1$, then $m_\pi=\frac{q+1}{2}$. If $q\equiv 3 \pmod{8}$, then
\[
\frac{q-1}{2}\equiv 1,\quad \frac{q+1}{2}\equiv 2 \pmod{4},
\]
and hence $w_2(\pi)\neq 0$ in this case.

If $q\equiv 7 \pmod{8}$, then
\[
\frac{q-1}{2}\equiv 3,\quad \frac{q+1}{2}\equiv 0 \pmod{4},
\]
so again $w_2(\pi)=0$ occurs if and only if $\psi\neq 1$.

\medskip

\noindent
\textit{(iv) Cuspidal representations.}
Here
\[
m_\pi=\frac{q-1}{2}.
\]
If $q\equiv 3 \pmod{8}$, then $m_\pi\equiv 1 \pmod{4}$, while if $q\equiv 7 \pmod{8}$, then $m_\pi\equiv 3 \pmod{4}$. Thus $w_2(\pi)\neq 0$ for all cuspidal representations.

\medskip

Combining the above, we see that $w_2(\pi)=0$ occurs precisely in the Steinberg case, and this happens if and only if $q\equiv 7 \pmod{8}$ and $\psi\neq 1$. This completes the proof.\end{proof}

Combining the above, we have the following Proposition. 
\begin{proposition}[Stiefel--Whitney classes for self-dual $\GL_2(\F_q)$ representations]
Let $q$ be odd, let $G_q=\GL_2(\F_q)$, and let $\pi$ be a self-dual irreducible complex representation of $G_q$. Then the behavior of $w_2(\pi)$ and $w_4(\pi)$ is given by the following table.
\begin{center}
\renewcommand{\arraystretch}{1.25}
\begin{tabular}{|c|c|c|c|}
\hline
$q$ & $\pi$ & $w_2(\pi)=0$ & $w_4(\pi)=0$ \\
\hline

\multirow{4}{*}{$1 \!\!\!\pmod{4}$}
& $1$-dim 
& Yes 
& Yes \\

\cline{2-4}

& Principal series 
& $\begin{aligned}
& q\equiv 1 \!\!\!\pmod{8} \ (\chi_1(-1)=1) \\
& q\equiv 5 \!\!\!\pmod{8} \ (\chi_1(-1)=-1)
\end{aligned}$
&
$\begin{aligned}
& q\equiv 1 \!\!\!\pmod{16} \ (\chi_1(-1)=1) \\
& q\equiv 13 \!\!\!\pmod{16} \ (\chi_1(-1)=-1)
\end{aligned}$ \\

\cline{2-4}

& Steinberg 
& $q\equiv 1 \!\!\!\pmod{8}$ 
& $q\equiv 1 \!\!\!\pmod{16}$ \\

\cline{2-4}

& Cuspidal 
& $q\equiv 1 \!\!\!\pmod{8}$ 
& $q\equiv 1 \!\!\!\pmod{16}$ \\

\hline

\multirow{4}{*}{$3 \!\!\!\pmod{4}$}
& $1$-dim 
& Yes 
& -- \\

\cline{2-4}

& Principal series 
& $\begin{aligned}
& q\equiv 3 \!\!\!\pmod{8} \ (\chi_1(-1)=1) \\
& q\equiv 7 \!\!\!\pmod{8} \ (\chi_1(-1)=-1)
\end{aligned}$
& -- \\

\cline{2-4}

& Steinberg twist ($\psi\neq 1$) 
& $q\equiv 7 \!\!\!\pmod{8}$ 
& -- \\

\cline{2-4}

& Cuspidal 
& No 
& -- \\

\hline
\end{tabular}
\end{center}
\end{proposition}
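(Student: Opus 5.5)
The proposition only collects, in one table, what was already shown in Proposition~\ref{prop SW classes GL2 q=1mod4 clean} and Proposition~\ref{prop w2 GL2 q=3mod4}. The plan is to split first according to $q \bmod 4$, since Theorem~\ref{theorem about w_2 and w_4} gives a different criterion in each case. Then, for each of the four families of self-dual irreducible representations listed in Proposition~\ref{prop self dual GL2}, I would read off $\dim\pi$, $\chi_\pi(h_1)$ and $\chi_\pi(h_2)$ from Proposition~\ref{prop self dual GL2 with h2}. From these I compute $m_\pi=(\dim\pi-\chi_\pi(h_1))/2$ and check each entry of the table against the relevant criterion.

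\emph{Case $q\equiv 1 \pmod 4$.} Here $-1$ is a square, so every quadratic character of $\F_q^\times$ is trivial at $-1$. The $w_2$ column follows from the criterion $w_2(\pi)=0\iff m_\pi\equiv 0\pmod 4$. The values are
\[
m_\pi\in\Bigl\{\,0,\ \tfrac{q-1}{2},\ \tfrac{q+3}{2}\,\Bigr\},
\]
according to family (and, for principal series, the sign $\chi_1(-1)$). For the $w_4$ column I use the formula in Theorem~\ref{theorem about w_2 and w_4}(1). I would first check that $\dim\pi-\chi_\pi(h_2)=0$ in every family. For cuspidals this uses $\theta^{q+1}=1$, so $\theta|_{\F_q^\times}$ is quadratic and hence $\theta(-1)=1$. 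Then $w_4(\pi)=0$ reduces to $\binom{m_\pi/2}{2}$ being even, given $w_2(\pi)=0$, and converting this into congruences modulo $16$ reproduces the column. This is exactly Proposition~\ref{prop SW classes GL2 q=1mod4 clean}, so that case can simply be cited.

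\emph{Case $q\equiv 3 \pmod 4$.} The restriction to $C_2^2$ is injective, and the criterion is that $\binom{m_\pi}{2}$ is even, i.e.\ $m_\pi\equiv 0,1\pmod 4$. No $w_4$ formula is available in this case, which accounts for the dashes in the table. Every nontrivial quadratic character takes the value $-1$ at $-1$, so in each family I evaluate $m_\pi \bmod 4$ separately for $q\equiv 3$ and $q\equiv 7\pmod 8$. The results are:
\begin{enumerate}
\item[(i)] one-dimensional: $m_\pi\in\{0,1\}$;
\item[(ii)] principal series: $m_\pi=(q-1)/2$ or $(q+3)/2$, according to $\chi_1(-1)$;
\item[(iii)] Steinberg twists: $m_\pi=(q-\psi(-1))/2$;
\item[(iv)] cuspidal: $m_\pi=(q-1)/2$.
\end{enumerate}
This is the content of Proposition~\ref{prop w2 GL2 q=3mod4}, which I would invoke for that half of the table.

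\emph{Main obstacle.} The delicate part is the bookkeeping in the $q\equiv 3\pmod 4$ rows, where the criterion is $m_\pi\equiv 0,1 \pmod 4$ rather than $m_\pi\equiv 0$. I would recheck each entry directly against this criterion rather than trust the earlier prose, in particular the untwisted Steinberg representation when $q\equiv 3\pmod 8$. There $\psi(-1)=1$ gives $m_\pi=(q-1)/2\equiv 1\pmod 4$, which by the criterion forces $w_2=0$. This appears to conflict with the ``Steinberg twist ($\psi\neq1$)'' row, so one must either add that case to the table or find an error in the character value $\chi_{\St}(h_1)$. For that I would recompute $\chi_{\St}(h_1)=\chi_{\Ind_B^{G_q}\mathbf 1}(h_1)-1=2-1=1$, which gives $m_\pi=(q-1)/2$. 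The same recomputation matters for the Steinberg rows in the $q\equiv 1\pmod 4$ case, where the earlier proof used $\chi_\pi(h_1)=0$. With $\chi_\pi(h_1)=1$ the Steinberg entries must be redone with $m_\pi=(q-1)/2$ before the table can be asserted.
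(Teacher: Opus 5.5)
Your route is the paper's own: the proposition only combines Propositions~\ref{prop SW classes GL2 q=1mod4 clean} and~\ref{prop w2 GL2 q=3mod4}. Your Steinberg objection is correct.

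\begin{itemize}
\item By Proposition~\ref{prop self dual GL2 with h2}(3), $\chi_{\St}(h_1)=1$. So for $\psi=1$ and $q\equiv 3\pmod 8$ one gets $m_\pi=(q-1)/2\equiv 1\pmod 4$, hence $w_2(\St)=0$.
\item The sentence ``hence $w_2(\pi)\neq 0$'' in part (iii) of the proof of Proposition~\ref{prop w2 GL2 q=3mod4} is therefore a slip, and the table is incomplete at that row.
\item Your worry about the $q\equiv 1\pmod 4$ Steinberg rows is unfounded. The ``$\chi_\pi(h_1)=0$'' there is a typo: the paper actually uses $m_\pi=(q-1)/2$, so redoing it changes nothing.
\end{itemize}

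The gap is that you still plan to cite Proposition~\ref{prop w2 GL2 q=3mod4} for the other $q\equiv 3\pmod 4$ rows, and the same slip occurs in the cuspidal row. There $m_\pi=(q-1)/2\equiv 1\pmod 4$ when $q\equiv 3\pmod 8$. So $\binom{m_\pi}{2}$ is even and $w_2(\pi_\theta)=0$, contradicting the entry ``No''.

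You can check this directly at $q=3$. The unique self-dual cuspidal representation of $\GL_2(\F_3)$ is the $2$-dimensional representation inflated from $S_3$ via $\GL_2(\F_3)\to\mathrm{PGL}_2(\F_3)\cong S_4\to S_3$. On a transposition subgroup, which detects $H^*(S_3;\Z/2\Z)$, it restricts to $1\oplus\op{sgn}$, so $w_2=0$. This is the serious error, since cuspidals are about half of $\mathcal{O}_q$; it alone moves the $a=3$ limit in Theorem~\ref{main thm 2 corrected} from $1/4$ to $3/4$.

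You also overlook that $\Ind_B^{G_q}(1\otimes\mu)$ is self-dual and irreducible but not of the form $\chi_1\otimes\chi_1^{-1}$. For $q\equiv 3\pmod 4$ it has $\chi_\pi(h_1)=0$ and $m_\pi=(q+1)/2$, which is missing from your list. The table's row is right for it only if one labels $\chi_1=\mu$.

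Finally, your $w_4$ step adds the hypothesis ``given $w_2(\pi)=0$'', which the table does not impose. For $q\equiv 1\pmod 4$ one has $\dim\pi=\chi_\pi(h_2)$ in every family. Theorem~\ref{theorem about w_2 and w_4}(1), together with detection on $D$, then gives unconditionally that $w_4(\pi)=0$ if and only if $\binom{m_\pi/2}{2}$ is even, i.e.\ $m_\pi\equiv 0,2\pmod 8$. The paper records both residues and then silently drops $m_\pi\equiv 2\pmod 8$.

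For example, at $q=5$ the Steinberg representation has $m_\pi=2$, so $w_4(\St)=0$ while $w_2(\St)\neq 0$; the table predicts $w_4(\St)\neq 0$. The $w_4$ column is correct only when read as ``$w_2=w_4=0$''. So the table cannot be established by combining the two propositions. The row-by-row recheck you propose has to be applied to every row, not only the Steinberg one.
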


\begin{lemma}
Let $G_q=\GL_2(\F_q)$ with $q$ odd, and let $\mathcal{O}_q$ denote the set of irreducible orthogonal complex representations of $G_q$. Then:

\begin{enumerate}
\item The set of one-dimensional representations has density $0$ in $\mathcal{O}_q$.
\item The set of Steinberg twists has density $0$ in $\mathcal{O}_q$.
\end{enumerate}
\end{lemma}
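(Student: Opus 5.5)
The plan is to count each family of self-dual irreducible representations using Proposition \ref{prop self dual GL2}, together with Proposition \ref{selfdualimpliesorthogonal}, which identifies orthogonal with self-dual. The aim is to show that $|\mathcal{O}_q|$ grows linearly in $q$, while the one-dimensional representations and the Steinberg twists in $\mathcal{O}_q$ are bounded in number independently of $q$. The density statements then follow at once from the ratios $O(1)/|\mathcal{O}_q|\to 0$.

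\textbf{Step 1: the small families.} For the one-dimensional representations $\psi\circ\det$, Proposition \ref{prop self dual GL2}(1) gives self-duality if and only if $\psi^2=1$. Since $\F_q^\times$ is cyclic of even order $q-1$, there are exactly $2$ such $\psi$. In the same way, Proposition \ref{prop self dual GL2}(3) gives exactly $2$ self-dual Steinberg twists. Hence each of these families has exactly $2$ elements in $\mathcal{O}_q$.

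\textbf{Step 2: a lower bound for $|\mathcal{O}_q|$.} Counting cuspidal representations alone is enough. By Proposition \ref{prop self dual GL2}(4), $\pi_\theta$ is self-dual exactly when $\theta^{q+1}=1$ and $\theta^q\neq\theta$. The characters with $\theta^{q+1}=1$ form a cyclic group of order $q+1$. Among these, the condition $\theta^q=\theta$ means $\theta^{q-1}=1$, and combined with $\theta^{q+1}=1$ this forces $\theta^2=1$. That leaves $q+1-2=q-1$ regular characters. Identifying $\theta$ with $\theta^q=\theta^{-1}$, which is a free involution on these, yields $(q-1)/2$ self-dual cuspidal representations.

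The principal series could be counted similarly from the condition $\{\chi_1,\chi_2\}=\{\chi_1^{-1},\chi_2^{-1}\}$, giving roughly $q/2$ further classes with $\chi_2=\chi_1^{-1}$. This count is not needed for the lemma, since a lower bound $|\mathcal{O}_q|\ge (q-1)/2$ suffices.

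\textbf{Step 3: conclusion.} Each of the two families has density at most $2/((q-1)/2)=4/(q-1)$, which tends to $0$ as $q\to\infty$. The only step requiring care is the cuspidal count in Step 2: one must check that the regularity condition removes only the quadratic characters, so that the count genuinely grows with $q$. Everything else is immediate from the earlier propositions.
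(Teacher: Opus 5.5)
Your proof is correct and follows essentially the same route as the paper. Both arguments count exactly two self-dual one-dimensional representations and two self-dual Steinberg twists, and compare this with $|\mathcal{O}_q|\gg q$. The only difference is that you obtain the lower bound from an explicit derivation of the self-dual cuspidal count $(q-1)/2$, whereas the paper simply asserts that the principal series and cuspidal families each have about $q/2$ members.
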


\begin{proof}
We estimate the sizes of the relevant families. We recall that self-dual representations of $G_q$ are orthogonal. The self-dual irreducible representations of $G_q$ consist of:
\begin{itemize}
\item $\sim \frac{q}{2}$ principal series,
\item $\sim \frac{q}{2}$ cuspidal representations,
\item $2$ one-dimensional representations,
\item $2$ Steinberg twists.
\end{itemize}
There are exactly two one-dimensional self-dual representations, namely the trivial and the quadratic character. Thus
\[
\frac{\#\{\text{1-dimensional}\}}{|\mathcal{O}_q|}
\;\ll\; \frac{1}{q}\;\longrightarrow\;0.
\]
There are exactly two Steinberg twists $\St\otimes(\psi\circ\det)$ with $\psi^2=1$. Hence
\[
\frac{\#\{\text{Steinberg twists}\}}{|\mathcal{O}_q|}
\;\ll\; \frac{1}{q}\;\longrightarrow\;0.
\]
\end{proof}

\begin{theorem}\label{main thm 2 corrected}
Let $G_q=\GL_2(\F_q)$ with $q$ odd, and let $\mathcal{O}_q$ denote the set of irreducible orthogonal complex representations of $G_q$. Then
\[
\lim_{\substack{q\to\infty\\ q\equiv a\pmod{8}}}
\frac{\#\{\pi\in \mathcal{O}_q : w_2(\pi)=0\}}{|\mathcal{O}_q|}
=
\begin{cases}
1 & \text{if } a=1,\\[4pt]
\frac14 & \text{if } a=3,5,\\[4pt]
0 & \text{if } a=7.
\end{cases}
\]

Furthermore,
\[
\lim_{X\rightarrow \infty}
\frac{\sum_{\substack{q\leq X\\
q\text{ is odd}}}\#\{\pi\in \mathcal{O}_q : w_2(\pi)=0\}}
{\sum_{\substack{q\leq X\\
q\text{ is odd}}}|\mathcal{O}_q|}
=
\frac{3}{8},
\]where in the sums above, $q$ ranges over odd prime powers.
\end{theorem}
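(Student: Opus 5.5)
The plan is to reduce everything to counting the four families of self-dual representations in $\mathcal{O}_q$, and then read off the vanishing of $w_2$ from Propositions \ref{prop SW classes GL2 q=1mod4 clean} and \ref{prop w2 GL2 q=3mod4}. By the preceding lemma, one-dimensional representations and Steinberg twists contribute $O(1)$ members. So only two families matter: the principal series $\Ind_B^{G_q}(\chi_1\otimes\chi_1^{-1})$ with $\chi_1^2\neq 1$, and the self-dual cuspidal $\pi_\theta$.

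\textbf{Step 1: the counts.} Characters of the cyclic group $\F_q^\times$ with $\chi_1^2\neq1$ number $q-3$. Identifying $\chi_1\sim\chi_1^{-1}$ gives $(q-3)/2$ principal series, plus the single pair $\{1,\mu\}$. Since $\chi_1(-1)=\chi_1^{-1}(-1)$, the sign $\chi_1(-1)$ is well defined on the pair. Exactly $(q-1)/2$ characters satisfy $\chi(-1)=1$, so each sign class contains $q/4+O(1)$ representations. For cuspidals, self-duality means $\theta^{q+1}=1$, and regularity then means $\theta^2\neq 1$. This gives $q-1$ characters $\theta$, hence $(q-1)/2$ representations $\pi_\theta$. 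Altogether $|\mathcal{O}_q|=q+O(1)$, and the limiting densities are $1/4$ for principal series with $\chi_1(-1)=1$, $1/4$ for those with $\chi_1(-1)=-1$, and $1/2$ for cuspidals.

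\textbf{Step 2: limits along $q\equiv a\pmod 8$.} For each residue $a$, I will combine these densities with the summary table of the preceding proposition. That table determines, for each family and each class of $q\bmod 8$, whether $w_2(\pi)=0$. Summing the densities of the families where $w_2$ vanishes gives the limit for that $a$.

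This is where I expect the main difficulty. A naive reading of the table gives $3/4$ for $a=1$ (principal series with sign $+1$, plus cuspidals), $1/4$ for $a=3,5$, and $1/4$ for $a=7$. The statement instead asserts $1$ for $a=1$ and $0$ for $a=7$. So before assembling the limits I would re-derive $m_\pi\bmod 4$ for the principal series case with $\chi_1(-1)=-1$, where $m_\pi=(q+3)/2$. I would also recheck the criterion in Theorem \ref{theorem about w_2 and w_4} for which residues of $m_\pi$ kill $w_2$ when $q\equiv3\pmod4$. The aim is to locate where a $1/4$ mass should move from $a=7$ to $a=1$. Notably, both versions give the same average over the four classes, which is what the second assertion needs.

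\textbf{Step 3: the averaged statement.} Odd prime powers $q\le X$ are equidistributed among the residues $1,3,5,7\pmod 8$. This follows from the prime number theorem in arithmetic progressions, since proper prime powers contribute only $O(\sqrt X)$ terms. Because $|\mathcal{O}_q|=q+O(1)$, partial summation shows that the $q$-weighted averages are still uniform over the four classes. Writing the numerator as $\sum_q\bigl(L_{a(q)}+o(1)\bigr)|\mathcal{O}_q|$, where $L_a$ is the limit for the class $a$, the ratio tends to $\frac14\sum_a L_a=\frac14\bigl(1+\frac14+\frac14+0\bigr)=\frac38$.
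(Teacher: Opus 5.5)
Your Step 2 is a genuine gap, and the repair you sketch cannot work. You propose to locate a $\tfrac14$ of mass to move from $a=7$ to $a=1$, but the values you call the naive reading are exactly what the paper's own criteria give for those classes. Take a self-dual principal series $\pi=\Ind_B^{G_q}(\chi_1\otimes\chi_1^{-1})$ with $\chi_1(-1)=-1$; then $\chi_\pi(h_1)=-2$ and $m_\pi=(q+3)/2$.

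If $q\equiv 1\pmod 8$, then $m_\pi\equiv 2\pmod 4$. Theorem~\ref{theorem about w_2 and w_4}(1) gives $w_2(\pi)|_D=t_1+t_2\neq 0$, hence $w_2(\pi)\neq 0$, with no detection argument needed. There are $(q-1)/4$ such $\pi$, so the $a=1$ limit is $\tfrac34$, not $1$.

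If $q\equiv 7\pmod 8$, then $m_\pi\equiv 1\pmod 4$, so $\binom{m_\pi}{2}$ is even and $w_2(\pi)=0$. The $a=7$ limit is therefore $\tfrac14$, not $0$.

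The paper's proof reaches $1$ and $0$ only by asserting, in its cases (i) and (iv), the opposite of Proposition~\ref{prop SW classes GL2 q=1mod4 clean}(2)(a) and Proposition~\ref{prop w2 GL2 q=3mod4}(2)(i)(b). No hidden ingredient reconciles these.

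The recheck of the $q\equiv 3\pmod 4$ criterion that you plan also undermines your Step 3, because it exposes an error in a table entry you did trust. A self-dual cuspidal $\pi_\theta$ has $\theta^{q+1}=1$. Since $\F_q^\times$ is the group of $(q+1)$st powers in $\F_{q^2}^\times$, $\theta$ is trivial on $\F_q^\times$. Hence $\chi_{\pi_\theta}(-I)=q-1$ and $\chi_{\pi_\theta}(h_1)=0$. It follows that $\pi_\theta|_{C_2^2}\simeq m(\mathbf 1\oplus\op{sgn}_{(1,1)})$ with $m=m_\pi=(q-1)/2$, so $w(\pi_\theta)|_{C_2^2}=(1+v_1+v_2)^{m}$.

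For $q\equiv 3\pmod 8$ one has $m\equiv 1\pmod 4$, so $w_2(\pi_\theta)=0$. This is exactly the criterion $m_\pi\equiv 0,1\pmod 4$ used in the proof of Proposition~\ref{prop w2 GL2 q=3mod4}, yet its item (iv) and the summary table say ``No''. Two independent checks:
\begin{itemize}
\item Restriction to $C_2^2$ does detect $H^2$ here: $H^2(\GL_2(\F_q);\Z/2\Z)$ is spanned by the square of the determinant-mod-squares class, which restricts to $v_1^2+v_2^2$.
\item For $q=3$, $\pi_\theta$ is the $2$-dimensional representation of $S_3$ pulled back along $\GL_2(\F_3)\to S_4\to S_3$. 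It restricts to $\mathbf 1\oplus\op{sgn}$ on a Sylow $2$-subgroup, so its $w_2$ vanishes.
\end{itemize}

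Thus the limits are $\tfrac34,\tfrac34,\tfrac14,\tfrac14$ for $a=1,3,5,7$, and the weighted average is $\tfrac12$, not $\tfrac38$. Your remark that the average is insensitive to the discrepancy relies on the faulty cuspidal entry.

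Your Step 1 counts are fine, including the pair $\{1,\mu\}$ that the paper omits, and so is the equidistribution argument of Step 3. But Step 2 cannot be completed. The limits asserted in the statement do not follow from the paper's formulas, and neither your argument nor the paper's establishes them.
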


\begin{proof}
We give explicit counts of irreducible orthogonal representations of
$G_q=\GL_2(\F_q)$ and analyze the condition $w_2(\pi)=0$ in each case.
Irreducible orthogonal representations of $G_q$ fall into three families.

\smallskip

\noindent
\emph{(i) Principal series.}
These are representations of the form
\[
\pi=\Ind_B^{G_q}(\chi\otimes \chi^{-1})
\]
with $\chi\neq \chi^{-1}$. The number of such representations is
\[
\frac{(q-1)-2}{2}=\frac{q-3}{2},
\]
since there are $q-1$ characters of $\F_q^\times$, of which exactly
two (the trivial and quadratic characters) satisfy
$\chi=\chi^{-1}$, and we identify $\chi\sim\chi^{-1}$.

\smallskip

\noindent
\emph{(ii) Cuspidal representations.}
Self-dual cuspidal representations correspond to characters
$\theta$ of $\F_{q^2}^\times$ satisfying $\theta^{q+1}=1$ and
$\theta\neq \theta^q$, modulo $\theta\sim\theta^q$.
There are $q+1$ characters with $\theta^{q+1}=1$, of which two satisfy
$\theta=\theta^q$, so the number of self-dual cuspidal representations is
\[
\frac{q-1}{2}.
\]

\smallskip

\noindent
\emph{(iii) One-dimensional and Steinberg twists.}
There are exactly two one-dimensional orthogonal representations
(the trivial and quadratic determinant characters), and exactly two
Steinberg twists
\[
\St\otimes(\psi\circ\det)
\]
with $\psi^2=1$.

\medskip

Thus
\[
|\mathcal{O}_q|
=
\frac{q-3}{2}+\frac{q-1}{2}+O(1)
=
q+O(1).
\]

We now analyze the vanishing of $w_2(\pi)$ according to the residue
class of $q \pmod{8}$.

\medskip

\noindent
\textit{(i) $q\equiv 1 \pmod{8}$.}
From Proposition~\ref{prop SW classes GL2 q=1mod4 clean},
all orthogonal representations with $\dim\pi>1$ satisfy
$w_2(\pi)=0$, and the same holds for the one-dimensional representations.
Hence
\[
\#\{\pi : w_2(\pi)=0\}
=
|\mathcal{O}_q|,
\]
so
\[
\frac{\#\{\pi : w_2(\pi)=0\}}{|\mathcal{O}_q|}
\to 1.
\]

\medskip

\noindent
\textit{(ii) $q\equiv 5 \pmod{8}$.}
In this case, $w_2(\pi)=0$ occurs precisely for principal series
representations
\[
\pi=\Ind_B(\chi\otimes\chi^{-1})
\]
with $\chi(-1)=-1$.
Among the $q-1$ characters of $\F_q^\times$, exactly
$\frac{q-1}{2}$ satisfy $\chi(-1)=-1$.
Since the trivial and quadratic characters both satisfy
$\chi(-1)=1$, all such characters are admissible.
Passing to unordered pairs $\{\chi,\chi^{-1}\}$ yields
\[
\frac{q-1}{4}
\]
principal series representations with $w_2(\pi)=0$.

All other representations (cuspidal, Steinberg, and one-dimensional)
satisfy $w_2(\pi)\neq0$. Hence
\[
\frac{\#\{\pi : w_2(\pi)=0\}}{|\mathcal{O}_q|}
=
\frac{(q-1)/4}{q+O(1)}
\to \frac14.
\]

\medskip

\noindent
\textit{(iii) $q\equiv 3 \pmod{8}$.}
In this case, $w_2(\pi)=0$ occurs precisely for the one-dimensional
representations and for principal series representations
\[
\pi=\Ind_B(\chi\otimes\chi^{-1})
\]
with $\chi(-1)=1$.
Among the $q-1$ characters of $\F_q^\times$, exactly
$\frac{q-1}{2}$ satisfy $\chi(-1)=1$.
Since $q\equiv3\pmod4$, the quadratic character satisfies
$\chi(-1)=-1$, whereas the trivial character satisfies
$\chi(-1)=1$.
Removing the trivial character leaves
\[
\frac{q-3}{2}
\]
admissible characters.
Passing to unordered pairs $\{\chi,\chi^{-1}\}$ yields
\[
\frac{q-3}{4}
\]
principal series representations with $w_2(\pi)=0$.
Together with the two one-dimensional orthogonal representations,
this gives
\[
\frac{q+5}{4}
\]
representations satisfying $w_2(\pi)=0$.

All other representations (cuspidal and Steinberg twists)
satisfy $w_2(\pi)\neq0$. Hence
\[
\frac{\#\{\pi : w_2(\pi)=0\}}{|\mathcal{O}_q|}
=
\frac{(q+5)/4}{q+O(1)}
\to \frac14.
\]

\medskip

\noindent
\textit{(iv) $q\equiv 7 \pmod{8}$.}
In this case, $w_2(\pi)=0$ occurs precisely for the nontrivial
Steinberg twist
\[
\St\otimes(\psi\circ\det),
\qquad \psi^2=1,\ \psi\neq1.
\]
Thus only $O(1)$ representations satisfy $w_2(\pi)=0$, and therefore
\[
\frac{\#\{\pi : w_2(\pi)=0\}}{|\mathcal{O}_q|}
\to 0.
\]

It remains to compute the global average over all odd prime powers.
For $a\in\{1,3,5,7\}$, define
\[
\mathcal{Q}_a(X)
=
\{q\le X : q \text{ odd prime power and } q\equiv a\pmod8\}.
\]
We first show that these four residue classes are asymptotically
equidistributed.

Restrict first to odd primes. By Dirichlet's theorem on primes in
arithmetic progressions,
\[
\pi_a(X)
:=
\#\{p\le X : p\equiv a\pmod8\}
\sim
\frac{1}{\varphi(8)}\frac{X}{\log X}
=
\frac14\frac{X}{\log X}.
\]
Hence
\[
\pi_a(X)
\sim
\pi_b(X)
\]
for all $a,b\in\{1,3,5,7\}$.

We now analyze the contribution of higher prime powers.
Let
\[
R(X)
=
\#\{p^k\le X : p \text{ odd prime},\ k\ge2\}.
\]
If $p^k\le X$ with $k\ge2$, then necessarily $p\le X^{1/2}$.
Therefore
\[
R(X)
\le
\sum_{2\le k\le \log_2 X}\pi(X^{1/k}),
\]
where $\pi(y)$ denotes the number of primes at most $y$.
Using the estimate $\pi(y)\ll y/\log y$, we obtain
\[
R(X)
\ll
\sum_{2\le k\le \log_2 X}X^{1/k}
=
O(X^{1/2}).
\]
Since the number of odd primes up to $X$ is asymptotic to $X/\log X$,
it follows that
\[
R(X)
=
o\!\left(\frac{X}{\log X}\right).
\]
Thus non-prime prime powers contribute negligibly to asymptotic density,
and consequently
\[
\#\mathcal{Q}_a(X)
\sim
\frac14
\sum_{b\in\{1,3,5,7\}}
\#\mathcal{Q}_b(X).
\]

Now set
\[
N(q)
=
\#\{\pi\in\mathcal{O}_q : w_2(\pi)=0\}
\quad\text{and}\quad
D(q)
=
|\mathcal{O}_q|.
\]
From the preceding analysis, for $q\equiv a\pmod8$ we have
\[
\frac{N(q)}{D(q)}
=
c_a+o(1),
\]
where
\[
c_1=1,
\qquad
c_3=c_5=\frac14,
\qquad
c_7=0.
\]
Moreover,
\[
D(q)=q+O(1).
\]

We decompose the denominator according to residue classes:
\[
\sum_{\substack{q\le X\\ q\text{ odd}}}D(q)
=
\sum_{a\in\{1,3,5,7\}}
\sum_{q\in\mathcal{Q}_a(X)}(q+O(1)).
\]
Since the residue classes are equidistributed and the error term is of
lower order, each congruence class contributes asymptotically one
quarter of the total. More precisely,
\[
\sum_{q\in\mathcal{Q}_a(X)} q
\sim
\frac14
\sum_{\substack{q\le X\\ q\text{ odd}}} q.
\]

Similarly,
\[
\sum_{\substack{q\le X\\ q\text{ odd}}}N(q)
=
\sum_{a\in\{1,3,5,7\}}
\sum_{q\in\mathcal{Q}_a(X)}
\bigl(c_a q+o(q)\bigr).
\]
Hence
\[
\sum_{\substack{q\le X\\ q\text{ odd}}}N(q)
\sim
\sum_{a\in\{1,3,5,7\}}
c_a
\sum_{q\in\mathcal{Q}_a(X)} q.
\]
Using the equidistribution just proved, we obtain
\[
\sum_{\substack{q\le X\\ q\text{ odd}}}N(q)
\sim
\frac14(c_1+c_3+c_5+c_7)
\sum_{\substack{q\le X\\ q\text{ odd}}} q.
\]

Since
\[
\sum_{\substack{q\le X\\ q\text{ odd}}}D(q)
\sim
\sum_{\substack{q\le X\\ q\text{ odd}}} q,
\]
it follows that
\[
\lim_{X\to\infty}
\frac{\sum_{\substack{q\le X\\ q\text{ odd}}}N(q)}
{\sum_{\substack{q\le X\\ q\text{ odd}}}D(q)}
=
\frac14(c_1+c_3+c_5+c_7).
\]
Substituting the values of the constants gives
\[
\frac14\left(1+\frac14+\frac14+0\right)
=
\frac38.
\]
This proves the theorem.
\end{proof}
\bibliographystyle{alpha}
\bibliography{references}

\end{document}